\theoremstyle{plain}
  \newtheorem{thm}{Theorem}[section]
  \newtheorem{lem}[thm]{Lemma}
  \newtheorem{prop}[thm]{Proposition}
  \newtheorem{cor}[thm]{Corollary}
  \newtheorem*{thm*}{Theorem}  
  \newtheorem*{thmm*}{Main Theorem} 
\theoremstyle{definition}
  \newtheorem{defn}[thm]{Definition}
  \newtheorem{rmk}[thm]{Remark}
  \newtheorem*{ack*}{Acknowledgement}
\theoremstyle{plain}
\numberwithin{equation}{section}
\newcommand\HD[1]{{\rm H}^{#1}}
\newcommand\pl{\partial}
\newcommand\dbr{\bar{\partial}}
\newcommand\dbrs{\bar{\partial}^*}
\newcommand\oh{\frac{1}{2}}
\newcommand\ot{\frac{1}{3}}
\newcommand\dd{{\mathrm d}}
\newcommand\ex{\tilde}
\newcommand\prp{\natural}
\newcommand\bo{{\bf 1}}
\newcommand\CV{\mathcal{V}}
\newcommand\CC{\mathcal{C}}
\newcommand\CS{\mathcal{S}}
\newcommand\CD{\mathcal{D}}
\newcommand\CO{\mathcal{O}}
\newcommand\CI{\mathcal{I}}
\newcommand\BC{\mathbb{C}}
\newcommand\BR{\mathbb{R}}
\newcommand\BN{\mathbb{N}}
\newcommand\BZ{\mathbb{Z}}
\newcommand\BS{\mathbb{S}}
\newcommand\BI{\mathbb{I}}
\newcommand\rb{{\bf r}}
\newcommand{\canspinex}{\underline{\BC}\oplus\ex{K}^{-1}}
\newcommand\aE{A_E}
\newcommand\aeE{A_{\ex{E}}}
\newcommand{\aer}{\ex{A}_r}
\newcommand{\ahr}{\hat{A}_r}
\newcommand\can{{o}}
\newcommand\appr{(0)}
\newcommand\err{({\varepsilon})}
\newcommand\fc{\mathfrak{c}}
\newcommand\fe{\mathfrak{e}}
\newcommand\fb{\mathfrak{b}}
\newcommand\fj{\mathbf{j}}
\newcommand\fk{\mathbf{k}}
\newcommand\fD{\mathfrak{D}}
\newcommand\fR{\mathfrak{R}}
\newcommand\fE{\mathfrak{E}}
\newcommand\fr{\mathfrak{r}}
\newcommand\fs{\mathfrak{s}}
\newcommand\ck{\check}
\DeclareMathOperator{\sfa}{\mathsf f}
\DeclareMathOperator{\cl}{cl}
\DeclareMathOperator{\pr}{pr}
\DeclareMathOperator{\spn}{span}
\DeclareMathOperator{\spec}{spec}
\DeclareMathOperator{\End}{End}
\DeclareMathOperator{\supp}{supp}
\begin{document}

\title[Dirac Spectral Flow on Contact $3$-Manifolds II]{Dirac Spectral Flow on Contact Three Manifolds II:\\
Thurston--Winkelnkemper Contact Forms}

\author[C.-J. Tsai]{Chung-Jun Tsai}
\address{Department of Mathematics \\ and National Center for Theoretical Sciences (Mathematics Division, Taipei Office)\\
National Taiwan University\\ Taipei 10617\\ Taiwan}
\email{cjtsai@ntu.edu.tw}

\date{\usdate{\today}}

\maketitle

\begin{abstract} Given an open book decomposition $(\Sigma,\tau)$ of a three manifold $Y$, Thurston and Winkelnkemper \cite{ref_TW} construct a specific contact form $a$ on $Y$.  Given a spin-c Dirac operator $\CD$ on $Y$, the contact form naturally associates a one parameter family of Dirac operators $\CD_r = \CD - \frac{ir}{2}\cl(a)$ for $r\geq0$.  When $r>\!>1$, we prove that the spectrum of $\CD_r = \CD_0 - \frac{ir}{2}\cl(a)$ within $[-\oh r^\oh,\oh r^\oh]$ are almost uniformly distributed.  With the result in Part I \cite{ref_Ts2}, it implies that the subleading order term of the spectral flow from $\CD_0$ to $\CD_r$ is of order $r (\log r)^{\frac{9}{2}}$.  Besides the interests of the spectral flow, the method of this paper provide a tool to analyze the Dirac operator on an open book decomposition.
\end{abstract}


\section{Introduction}
Suppose that $(Y,a)$ is a contact three manifold, and $\CD$ is a spin-c Dirac operator on $Y$.  It naturally associates a one parameter family of Dirac operators $\CD_r = \CD - \frac{ir}{2}\cl(a)$ for $r\geq0$.  The spectral flow from $\CD_0$ to $\CD_r$ appears in the study of contact geometry \cite{ref_Taubes_SW_Weinstein}.  In Part I \cite{ref_Ts2}, we analyze the eigensections of $\CD_r$ along the Reeb vector field and on the contact hyperplane, then relate the spectral flow to certain spectral asymmetry property of small eigenvalues of $\CD_r$.

This article focuses on the case when $Y$ is given by an open book decomposition and $a$ is the Thurston--Winkelnkemper contact form \cite{ref_TW}.  We introduce another Dirac operator $\ex{\CD}_r$ over a fibered three manifold.  The Dirac operator $\ex{\CD}_r$ captures the spectral properties of $\CD_r$, and the spectrum of $\ex{\CD}_r$ is easier to handle.  It can be used to study the spectral asymmetry and the spectral flow of $\CD_r$.  Besides the interests of the spectral flow, our construction provides a tool to analyze the Dirac equation on an open book decomposition.

\subsection{Dirac spectral flow}
Let $(Y,\dd s^2)$ be an oriented Riemannian three manifold.  A \emph{spin-c structure} on $Y$ consists of a rank $2$ Hermitian vector bundle $\BS$ and a bundle map $\cl:TY\to\End(\BS)$ such that:
\begin{itemize}
\item $\cl(v)^2 = - |v|^2$ for any $v\in T_yY$ ~;
\item if $|v|=1$, then $\cl(v)$ is unitary;
\item if $\{e_1,e_2,e_3\}$ is an oriented orthonormal frame for $T_yY$, then $\cl(e_1)\cl(e_2)\cl(e_3)$ is the identity endomorphism of $\BS|_y$.
\end{itemize}
The bundle $\BS$ is called the \emph{spinor bundle}, and the map $\cl$ is called the \emph{Clifford action}.

A \emph{spin-c connection} on $\BS$ is a Hermitian connection $\nabla:\CC^\infty(Y;\BS)\to\CC^\infty(Y;T^*Y\otimes\BS)$ which is compatible with the Clifford action in the following sense:  for any tangent vector field $v$ and any section $\psi$ of $\BS$,
\begin{align*}
\nabla(\cl(v)\psi) &= \cl(\nabla^{\rm LC}v)\psi + \cl(v)\nabla\psi
\end{align*}
where $\nabla^{\rm LC}$ is the Levi-Civita connection.  Given a spin-c connection, the associated \emph{spin-c Dirac operator} $\CD$ is defined to be the composition:
\begin{align*}
\CC^\infty(Y;\BS) \stackrel{\nabla}{\longrightarrow} \CC^\infty(Y;T^*Y\otimes\BS) \stackrel{\text{metric dual}}{\longrightarrow} \CC^\infty(Y;TY\otimes\BS) \stackrel{\cl}{\longrightarrow} \CC^\infty(Y;\BS) ~.
\end{align*}

\smallskip

Atiyah, Patodi and Singer \cite{ref_APS1, ref_APS2, ref_APS3} pioneered the study of the Dirac spectral flow.  What follows is the basic idea.  Suppose that $\CD$ is a spin-c Dirac operator.  Let $\{A_s\}_{s\in[0,1]}$ be a one parameter family of real valued $1$-forms.  Consider the one parameter family of Dirac operators $\{\CD_{A_s} = \CD - i\cl(A_s)\}_{s\in[0,1]}$.  In other words, they are Dirac operators associated to the spin-c connections $\nabla - iA_s\BI$, where $\BI$ is the identity endomorphism.  For simplicity, assume that $\CD_{A_0}$ and $\CD_{A_1}$ have trivial kernel.  The Dirac spectral flow is the count of the total number of zero eigenvalues of $\{\CD_{A_s}\}_{s\in[0,1]}$ with sign.  More precisely, the eigenvalues near zero move in a continuously differentiable manner if $\{A_s\}_{s\in[0,1]}$ is suitably generic.  The Dirac spectral flow is equal to the number of eigenvalues which cross zero with positive slope minus the number which cross zero with negative slope.  The resulting count turns out to be path independent and so depends only on the ordered pair $(A_0,A_1)$.

In particular, if there is a real valued $1$-form $a$, we can consider the spectral flow from $A_0 = 0$ to $A_1 = \frac{r}{2}a$ for $r>\!>1$.  This spectral flow can be regarded as a function of $r$, and will be denoted by $\sfa_a(\CD,r)$.  In \cite[\S5]{ref_Taubes_SW_Weinstein} and \cite{ref_Taubes_sf}, Taubes studied this spectral flow function $\sfa_a(\CD,r)$.
\begin{thm*} {\rm(}\cite[Proposition 5.5]{ref_Taubes_SW_Weinstein}{\rm)}
Suppose that $Y$ is a compact, oriented three manifold with a Riemannian metric $\dd s^2$.  Suppose that $\CD$ is a spin-c Dirac operator on $Y$.
Then, there exist a universal constant $\delta\in(0,\oh)$ and a constant $c_1$ is determined by $\dd s^2$ and $\CD$ such that
\begin{align*}
\big|\sfa_a(\CD,r)-\frac{r^2}{32\pi^2}\int_Y a\wedge\dd a\big| \leq c_1r^{\frac{3}{2}+\delta}
\end{align*}
for any $1$-form $a$ with $||a||_{\mathcal{C}^3}\leq 1$ and any $r\geq c_1$.
\end{thm*}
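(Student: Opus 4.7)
The plan is to realize $\sfa_a(\CD,r)$ as an Atiyah--Patodi--Singer index on the cylinder $X=[0,1]_s\times Y$. One equips $X$ with the pulled-back spin-c structure and the family of spin-c connections $\nabla - is(r/2)a\,\BI$; these assemble into a four-dimensional chiral Dirac operator $\mathbb{D}^+$. With APS boundary conditions at $s=0,1$, the standard spectral-flow formula gives
\begin{align*}
\sfa_a(\CD,r)\;=\;\int_X \widehat A(TX)\wedge e^{c_1(\fs_s)/2}\;+\;\tfrac12\bigl(\eta(\CD_0)-\eta(\CD_r)\bigr)\;+\;\text{integer kernel corrections}.
\end{align*}
Since $\eta(\CD_0)$ is an $r$-independent constant, the proof reduces to (i) computing the local index integral exactly, and (ii) proving the a priori estimate $|\eta(\CD_r)|\le c_1 r^{3/2+\delta}$.

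Step (i) is a Chern--Weil calculation. On $X$ the variation of the determinant-line connection contributes curvature $-ir(ds\wedge a+s\,da)$, whose wedge square contains the single $ds$-form $-2r^2 s\,ds\wedge a\wedge da$. Integrating $s\in[0,1]$ yields $-r^2\int_Y a\wedge da$; with the standard identification $c_1^2 = -F_{\det}^2/(4\pi^2)$, the $c_1^2/8$ term of $\widehat A\,e^{c_1/2}$ then produces exactly $\tfrac{r^2}{32\pi^2}\int_Y a\wedge da$. The remaining $s$-independent Pontryagin density and all $r$-linear cross terms together contribute $O(r)$, well within the error tolerance.

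Step (ii) is the analytic heart. Write
\begin{align*}
\eta(\CD_r)\;=\;\frac{1}{\sqrt\pi}\int_0^\infty t^{-1/2}\,\operatorname{Tr}\bigl(\CD_r\,e^{-t\CD_r^2}\bigr)\,dt
\end{align*}
and split the integral at $t_0=r^{-1}$. For $t<t_0$, local heat-kernel asymptotics for the semiclassical operator $\CD_r$ apply; the Clifford-odd supertrace coefficients vanish identically on algebraic grounds, leaving only an $O(r)$ contribution. For $t>t_0$ one needs a Weyl-type count: using Lichnerowicz-type lower bounds where $|a|$ is large and Dirichlet bracketing near the zero set $\{a=0\}$, the number of eigenvalues of $\CD_r$ in $[-\Lambda,\Lambda]$ is $O(r^2+r\Lambda)$ for $\Lambda\le r^{1/2}$; a careful sign-comparison between positive and negative small eigenvalues, weighted by the Gaussian $e^{-t\lambda^2}$, then bounds the long-time piece by $r^{3/2+\delta}$.

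The principal obstacle is the quantitative sign cancellation inside the small-eigenvalue window $|\lambda|\lesssim r^{1/2}$. With only a $\mathcal{C}^3$-bound on $a$ and no contact hypothesis in force, one cannot invoke any single global model. One must localize on balls of radius $\sim r^{-1/2}$, compare $\CD_r$ on each ball to a constant-coefficient operator via a parametrix, and sum the local eta defects; the accumulated error is what ultimately produces the exponent $\tfrac32+\delta$ rather than the Weyl-expected $\tfrac32$. Extracting a \emph{universal} $\delta$, independent of both $a$ and of the geometry of $\{a=0\}$, is the crux of the argument.
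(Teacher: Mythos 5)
The framework you outline — realize $\sfa_a(\CD,r)$ via Atiyah--Patodi--Singer as a Chern--Weil integral plus an eta-invariant difference, compute the integral to get the $\tfrac{r^2}{32\pi^2}\int a\wedge \dd a$ leading term, and then bound $|\eta(\CD_r)|$ — is indeed the skeleton of Taubes's argument (the cited \cite{ref_Taubes_SW_Weinstein} and \cite{ref_Taubes_sf}). But as written, your proposal does not prove the theorem; it restates it.

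The substantive gaps are these. First, your eigenvalue count ``$O(r^2 + r\Lambda)$ for $\Lambda\le r^{1/2}$'' is not correct: taking $\Lambda = r^{1/2}$ would give $O(r^2)$ eigenvalues in $[-r^{1/2},r^{1/2}]$, whereas the count Taubes actually proves (and which \cite[Cor.~3.3(i)]{ref_Ts2} records) is of order $r^{3/2}$ — a full power of $r^{1/2}$ smaller. With your stated count, even perfect sign cancellation cannot reach $r^{3/2+\delta}$. The sharp count is itself a nontrivial result, obtained via a Weitzenb\"ock/rescaling argument localized at scale $r^{-1/2}$; ``Lichnerowicz plus Dirichlet bracketing'' names the tools but supplies no uniform constant in the presence of a $\mathcal{C}^3$-only bound on $a$ and a possibly wild zero set $\{a=0\}$. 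Second, the short-time estimate is unjustified: $\operatorname{Tr}(\CD_r e^{-t\CD_r^2})$ is \emph{not} a Clifford supertrace, and its local heat coefficients on a $3$-manifold do not vanish ``identically on algebraic grounds''; the eta density is a genuine transgression term. Moreover the heat expansion must be uniform in $r$, which for the semiclassical operator $\CD_r$ requires the same microlocal work as the eigenvalue count. Third, and most importantly, the ``careful sign-comparison'' in your final paragraph is exactly the content of the theorem. You correctly identify the crux — extracting a universal $\delta$ from cancellation in the window $|\lambda|\lesssim r^{1/2}$ with no contact hypothesis — but you offer no mechanism for it. Taubes does this by comparing local eta defects across a cover at scale $r^{-1/2}$ with quantitative error bookkeeping; that bookkeeping is what produces $3/2+\delta$, and it does not follow from parametrix generalities alone. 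Without it, you have an outline of the proof, not a proof.
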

This theorem specifies the leading order term of the spectral flow function, and gives a bound on the subleading order term.

\subsection{Contact three manifold}
A $1$-form $a$ on an oriented three manifold is called a \emph{contact form} if $a\wedge\dd a>0$.  A contact form determines a vector field $v$ by $\dd a(v,\cdot)=0$ and $a(v)=1$.  This vector field is called the \emph{Reeb vector field}.  A contact form also defines a two plane distribution by $\ker(a)\subset TY$, which is called the \emph{contact hyperplane} or the \emph{contact structure}.  By the Frobenius theorem, $a\wedge\dd a>0$ implies that the contact hyperplane is everywhere non-integrable.

Given a contact form, it is a convenient normalization to take an adapted metric to consider spin-c structures and Dirac operators.  A Riemannian metric $\dd{s}^2$ is said to be \emph{adapted} to $a$ if $|a| = 1$ and $\dd a = 2*a$, where $*$ is the Hodge star operator.  Chern and Hamilton \cite{ref_CH} proved that such a metric always exists.

Suppose that $\CD$ is a spin-c Dirac operator on a contact three manifold $(Y,a)$.  It turns out that the zero eigensections of the Dirac operator $\CD_r = \CD - \frac{ir}{2}\cl(a)$ is closely related to the geometry of the contact form:  
\begin{itemize}
\item their derivative along the Reeb vector field is close to the multiplication by $ir/2$;
\item on the contact hyperplane, they almost solve certain Cauchy--Riemann equation.
\end{itemize}
The precise statements can be found in \cite[\S3]{ref_Ts2}.  The main goal is to understand more how $\CD_r$ is related to the geometry of the contact form.  The following question is the first step in this direction:  when $a$ is a contact form, is the subleading order term of $\sfa_a(\CD,r)$ of order $r$?

In this paper, we confirm the answer for certain types of contact forms, with a slightly larger order.

\begin{thmm*}
Suppose that $a$ is a Thurston--Winkelnkemper contact form \cite{ref_TW} on an open book decomposition.  Let $\CD$ be a spin-c Dirac operator.  Then, there exists a constant $c_2$ determined by the contact form $a$, the adapted metric $\dd{s}^2$ and the Dirac operator $\CD$ such that
\begin{align*}
\big|\sfa_a(\CD,r)-\frac{r^2}{32\pi^2}\int_Y a\wedge\dd a\big| \leq c_2r(\log r)^{\frac{9}{2}} ~.
\end{align*}
for any $r\geq c_2$.
\end{thmm*}

The Thurston--Winkelnkemper contact form will be explained momentarily.  The celebrated Giroux correspondence \cite{ref_Giroux} implies that each isotopy class of contact structures admits such a contact form.  In other words, the theorem asserts that the subleading order term of the spectral flow function is of order $r(\log r)^{\frac{9}{2}}$ for certain types of contact forms in each isotopy class of contact structures.

\subsection{Open book decomposition}
We now review the necessary background on the open book decomposition and the Thurston--Winkelnkemper contact form.  The reader can find a complete discussion on the open book decomposition in \cite[ch.9]{ref_OS} and \cite{ref_Etnyre}.  The notations set up here will be used throughout the \emph{rest of this paper}.

\subsubsection{Open book}\label{subsec_open_book}
An (\emph{abstract}) \emph{open book} consists of $(\Sigma,\tau)$ where
\begin{itemize}
\item $\Sigma$ is a Riemann surface with non-empty boundary, and $\pl\Sigma$ is a finite union of circles;
\item $\tau:\Sigma\to\Sigma$ is a diffeomorphism such that $\tau$ is the identity map on a collar neighborhood of $\pl\Sigma$.  The map $\tau$ is called the \emph{monodromy}.
\end{itemize}
An open book $(\Sigma,\tau)$ gives a three manifold $Y$ as follows:
\begin{align}\label{eqn_open_book_01}
Y &= \big(\Sigma\times_\tau S^1\big)\cup_\phi\big(\coprod_{|\pl\Sigma|}S^1\times B\big)
\end{align}
where $|\pl\Sigma|$ is the number of boundary components and $B$ is a two dimensional disk.  The component $\Sigma\times_\tau S^1$ is the mapping torus of $\tau$,
\begin{align*}
\Sigma\times_\tau S^1 &= \frac{\Sigma\times[0,2\pi]}{(\tau(x),0)\sim(x,2\pi)} ~.
\end{align*}
Since $\tau$ is the identity map on $\pl\Sigma$, the boundary of $\Sigma\times_\tau S^1$ is $(\pl\Sigma)\times S^1$.  The gluing map $\phi$ identifies the boundary of $\Sigma\times_\tau S^1$ with the boundary of $\coprod_{|\pl\Sigma|}S^1\times B$.  It is determined uniquely (up to isotopy) by the following properties:  for each boundary component of $\Sigma$,
\begin{itemize}
\item $\phi$ takes $(\pl\Sigma)\times\{y\}$ to the longitude of $\pl(S^1\times B)$ where $y\in S^1$;
\item $\phi$ takes $\{x\}\times S^1$ to the meridian of $\pl(S^1\times B)$ where $x\in\pl\Sigma$.
\end{itemize}
Note that there is an $S^1$ family of $\Sigma$ in $Y$, which are called the \emph{pages}.  The cores $\coprod_{|\pl\Sigma|}S^1\times\{0\}$ of the attaching handles are called the \emph{bindings}.  The term `\emph{near the bindings}' refers to the attaching handles $\coprod_{|\pl\Sigma|}S^1\times B$.  It is a particular tubular neighborhood of the bindings.

It is useful to describe the gluing map $\phi$ in terms of the local coordinate.  Let $\{\rho e^{it} ~|~ 1\leq\rho<1+50\delta, e^{it}\in S^1\}$ be a coordinate on a collar neighborhood of $\pl\Sigma$.  By taking $\delta$ to be sufficiently small, we may assume the monodromy $\tau$ to be the identity map on this chart.  The mapping torus $\Sigma\times_\tau S^1$ carries a canonical map to $S^1$.  Denote this map by $e^{i\theta}$.  It follows that $\{(\rho e^{it},e^{i\theta}) ~|~ 1\leq\rho<1+50\delta\}$ parametrize a collar neighborhood of $(\pl\Sigma)\times S^1$.  Let $\{(e^{it},\rho e^{i\theta}) ~|~ \rho< 1+50\delta\}$ be the coordinate on $S^1\times B$.  The gluing map $\phi$ is defined by identifying the corresponding coordinates.

\subsubsection{Contact form}\label{sec_TW_01}
Given an open book $(\Sigma, \tau)$, Thurston and Winkelnkemper \cite{ref_TW} construct a contact form $a$ on $Y$.  To start, choose a $1$-form $\zeta$ on $\Sigma$ such that
\begin{itemize}
\item $\dd\zeta$ defines an area form on $\Sigma$;
\item $2\zeta = (2-\rho)\dd t$ on the collar neighborhood $\{1\leq\rho<1+50\delta\}$ of $\pl\Sigma$.
\end{itemize}
There always exists such a $1$-form $\zeta$; see \cite[p.141]{ref_OS}.

Let $\chi(\theta)$ be a smooth, non-negative function of $\theta\in[0,2\pi]$ such that $\chi(\theta)=1$ near $0$ and $\chi(\theta)=0$ near $2\pi$.  For any $\theta\in[0,2\pi]$,
\begin{align*}  \omega_\theta = \chi(\theta)\dd\zeta + (1-\chi(\theta))\tau^*\dd\zeta  \end{align*}
is an area form on $\Sigma$.  Let $V$ be a positive scalar such that
\begin{align}\label{eqn_conformal_factor_01}
\frac{9}{10}\leq \big( 1 + \frac{2\chi'(\theta)}{V}\frac{\zeta\wedge\tau^*\zeta}{\omega_\theta} \big) \leq \frac{10}{9}
\end{align}
on $\Sigma$ and for any $\theta\in[0,2\pi]$.  With $\chi(\theta)$ and $V$ chosen, the contact form on the mapping torus $\Sigma\times_\tau S^1$ is defined by
\begin{align}\label{eqn_ob_contact_01}
a &= V\dd\theta + 2\chi(\theta)\zeta + 2(1-\chi(\theta))\tau^*\zeta ~.
\end{align}
A direct computation shows that
\begin{align*}
\dd a &= 2\big( \omega_\theta + \chi'(\theta)\dd\theta\wedge(\zeta-\tau^*\zeta) \big) ~, \\
\oh a\wedge\dd a &= \big( V + {2\chi'(\theta)}\frac{\zeta\wedge\tau^*\zeta}{\omega_\theta} \big)\dd\theta\wedge\omega_\theta ~.
\end{align*}
It follows that the Reeb vector field is
\begin{align}\label{eqn_Reeb_01}
\big( V + {2\chi'(\theta)}\frac{\zeta\wedge\tau^*\zeta}{\omega_\theta} \big)^{-1}\big( \frac{\pl}{\pl\theta} - \chi'(\theta)\omega_\theta^{-1}(\zeta - \tau^*\zeta) \big) ~.
\end{align}
The above expression is written on $\Sigma\times[0,2\pi]$, and $\frac{\pl}{\pl\theta}$ is the coordinate vector field.  The vector field $\omega_\theta^{-1}(\zeta - \tau^*\zeta)$ is defined by $\omega_\theta\big( \omega_\theta^{-1}(\zeta - \tau^*\zeta), \,\cdot\,\big) = (\zeta - \tau^*\zeta)(\,\cdot\,)$.

To extend the contact form to the attaching handles $\coprod_{|\pl\Sigma|}S^1\times B$, choose two smooth functions $f(\rho)$ and $g(\rho)$ of $\rho\in[0,1+50\delta)$ such that
\begin{itemize}
\item $f(\rho) = V$ and $g(\rho) = 2-\rho$ when $\rho\in[1-50\delta,1+50\delta)$;
\item $f(\rho) = \rho^2$ and $g(\rho) = 2-\rho^2$ when $\rho\in[0,50\delta]$;
\item for any $\rho\in(0,1+50\delta)$, $f'(\rho)\geq0$ and $g'(\rho)<0$.
\end{itemize}
It is not hard to see the existence of $f$ and $g$.  The contact form near the bindings is defined by
\begin{align}\label{eqn_ob_contact_02}
a &= f(\rho)\dd\theta + g(\rho)\dd t ~.
\end{align}
When $\rho<50\delta$, it is equal to $x\dd y - y\dd x + (2-x^2-y^2)\dd t$ in terms of the rectangular coordinate $x+iy = \rho e^{it}$.  Therefore, $a$ is a smooth $1$-form on $\coprod_{|\pl\Sigma|}S^1\times B$.

\subsubsection{Adapted metric}
The Main Theorem requires a specific adapted metric.  The metric is set to be
\begin{align}\label{eqn_metric_01}
a^2 + (\dd\rho)^2 + \frac{1}{4}(f'(\rho)\dd\theta + g'(\rho)\dd t)^2
\end{align}
near the bindings.  Such an adapted metric always exists.

Here is a parenthetical remark.  The attaching handles $\coprod_{|\pl\Sigma|}S^1\times B$ admits a $S^1\times S^1$-action:
\begin{align*} \begin{array}{ccccc}
(S^1\times S^1) &\times &(S^1\times B) &\longrightarrow &(S^1\times B) \\
\big((e^{it'},e^{i\theta'}) &, & (e^{it},\rho e^{i\theta})\big) &\mapsto &(e^{i(t+t')},\rho e^{i(\theta+\theta')})
\end{array} ~. \end{align*}
Near the boundary of the mapping torus $\Sigma\times_\tau S^1$,  the first $S^1$ factor rotates the boundary of $\Sigma$, and the second $S^1$ factor flips the pages.  The method of this paper should apply to any adapted metric which is invariant under this $S^1\times S^1$ action.  However, adapted metrics are not the main interests of this paper.  We only considered the metric (\ref{eqn_metric_01}), and did not try to work out general (locally $S^1\times S^1$ invariant) metrics.

\subsection{Contents of this paper}
Spin-c structures can be described more geometrically on a contact three manifold.  \S\ref{sec_Dirac_contact} is a review of the construction.  We also recall the results from Part I \cite{ref_Ts2} that will be used in this paper.

\S\ref{sec_sh_model} contains the key geometric construction.  Suppose that $Y$ is given by an open book $(\Sigma,\tau)$, and $a$ is the Thurston--Winkelnkemper contact form.  Let $D$ be a spin-c Dirac operator on $Y$.  Denote $D-\frac{ir}{2}\cl(a)$ by $D_r$.  In \S\ref{sec_sh_model} we construct another compactification $\ex{Y}$ of $\Sigma\times_\tau S^1$, and a one parameter family of Dirac operators $\{\ex{D}_r\}_{r\geq0}$ on $\ex{Y}$.  They have the following significance.
\begin{itemize}
\item In contrast to the open book (\ref{eqn_open_book_01}), $\ex{Y}$ admits a canonical map to $S^1$.
\item $\ex{D}_r$ on $\Sigma\times_\tau S^1\subset\ex{Y}$ is exactly the same as $D_r$ on $\Sigma\times_\tau S^1\subset Y$.
\item The Bochner--Weitzenb\"ock formula of $\ex{D}_r$ is very similar to that of $D_r$.
\end{itemize}

The canonical map $\ex{Y}\to S^1$ can be viewed as a gauge transform.  With such a gauge transform, Vafa and Witten \cite{ref_VW} have a brilliant argument to estimate the gap of spectrum of a Dirac operator.  By combining with the Bochner--Weitzenb\"ock formula, we prove that small spectrum of $\ex{D}_r$ are almost uniformly distributed.  This is done in \S\ref{sec_Vafa_Witten}.

In \S\ref{sec_local_model} we introduce two kinds of Dirac operators on $S^2\times S^1$.  They mimic the behavior of $D_r$ on $Y\backslash(\Sigma\times_\tau S^1)$ and $\ex{D}_r$ on $\ex{Y}\backslash(\Sigma\times_\tau S^1)$, respectively.  The eigenvalues and eigensections of these Dirac operators on $S^2\times S^1$ can be solved fairly explicitly.

With $\ex{D}_r$ on $\ex{Y}$ and the Dirac operators on $S^2\times S^1$, one can imagine that their spectrum approximate the spectrum of $D_r$ on $Y$.  \S\ref{sec_gluing} is devoted to compare these spectrum.  The precise statement is Theorem \ref{thm_glue_01}.  It is proved by gluing eigensections.

In \S\ref{sec_main_thm} we calculate the spectral flow from $D_0$ to $D_r$ with the help of the above models, and prove the Main Theorem.

\begin{rmk}
The constants $c_{(\cdot)}$ in this paper are always \emph{independent} of $r$.  In other words, they only depend on the contact form $a$, the metric $\dd s^2$ and the unperturbed spin-c Dirac operator.  The subscript is simply to indicate that these constants might increase/decrease after each step.  The subscript will be returned to $1$ at the beginning of each section.
\end{rmk}

\begin{ack*}
The author is very grateful to Cliff Taubes for bringing \cite{ref_VW} to his attention, and for many helpful discussions.  He would like to thank the Department of Mathematics at Harvard University, where part of this work was carried out.
\end{ack*}

\section{Dirac Operator on Contact Three Manifold}\label{sec_Dirac_contact}
Suppose that $(Y,a)$ is a contact three manifold with an adapted metric $\dd{s}^2$.  As in \cite[\S2.1]{ref_Taubes_SW_Weinstein}, the spin-c structures and spin-c connections can be described more geometrically.  It works for a stable Hamiltonian structure as well, of which a contact form is a special case.  We will also work with stable Hamiltonian structures in this paper.

\subsection{Stable Hamiltonian structure}
Suppose that $Y$ is a compact, oriented three manifold.  A \emph{stable Hamiltonian structure} is a pair $(b,\omega)$ where ${b}$ is a $1$-form and $\omega$ is a $2$-form such that
\begin{align*}\left\{\begin{aligned}
\dd\omega &= 0 ~,  \\
b\wedge\omega &> 0 ~,  \\
\ker(\omega) &\subset\ \ker(\dd b) ~.
\end{aligned}\right.\end{align*}
This notion was identified in \cite[\S2]{ref_BEHWZ} and \cite[\S2]{ref_CM}.  Given a contact form $a$, the pair $(a,\dd a)$ is a stable Hamiltonian structure.  A stable Hamiltonian structure determines a vector field $v$ by $\omega(v,\cdot)=0$ and ${b}(v) = 1$.  We still call $v$ the `\emph{Reeb vector field}'.

Such a structure needs not to come from a contact form.  Here is a standard example.  Suppose that $N$ is a compact surface with a symplectic form $\omega$, and $\tau$ is a symplectomorphism of $(N,\omega)$.  Let $\dd\theta$ be the pull-back of the standard $1$-form on $S^1$ by the projection $N\times_\tau S^1\to S^1$.  Since $\tau$ is a symplectomorphism, the $2$-form $\omega$ descends from $N\times\BR$ to $N\times_\tau S^1$.  Then, $(\dd\theta,\omega)$ is a stable Hamiltonian structure on the mapping torus $N\times_\tau S^1$.  Since $\ker(\dd\theta)$ is everywhere integrable, it is different from a contact form.

\subsubsection{Conformally adapted metric}
In Part I \cite{ref_Ts2}, a slightly more general type of metric is also considered.
Suppose that $({b},\omega)$ is a stable Hamiltonian structure on $Y$.  A Riemannian metric $\dd{s}^2$ is said to be \emph{conformally adapted} to $({b},\omega)$ if
\begin{align*}   |{b}|=\Omega^{-1} \qquad\text{and}\qquad *\omega = 2\Omega^{-1}{b}   \end{align*}
for some function $\Omega\in\CC^\infty(Y;\BR)$ with
\begin{align*}   \frac{9}{10}\leq\Omega\leq\frac{10}{9} ~.   \end{align*}
The particular bounds are just convenient normalizations; any other fixed bounds would do the job.  The operator $*$ is the Hodge star operator of $\dd s^2$.  Note that $\Omega^{-2}\dd s^2$ is an adapted metric\footnote{Namely, $|{b}|=1$ and $*\omega = 2{b}$ with respect to $\Omega^{-2}\dd s^2$.}.  The function $\Omega$ is called the \emph{conformal factor}.  Such a metric always exists.  The argument of Chern and Hamilton \cite{ref_CH} applies to a stable Hamiltonian structure as well.  It is equivalent to an almost complex structure $J$ on $\ker({b})$ such that $\Omega^2\omega(\,\cdot\, , J(\,\cdot\,))$ defines a metric on $\ker({b})$.

\subsubsection{Spin-c structure}\label{subsec_can_spinc}
With the metric fixed, $({b},\omega)$ determines a \emph{canonical spin-c structure}.  The spinor bundle is given by $\underline{\BC}\oplus K^{-1}$, where $\underline{\BC}$ is the trivial bundle and $K^{-1}$ is isomorphic as an ${\rm SO}(2)$ bundle to $\ker({b})$ with the orientation defined by $\omega$.  More precisely, for any $u\in\ker({b})$, let $J(u)$ be the metric dual of $\Omega^2\omega(u,\,\cdot\,)$.  The local sections of $K^{-1}$ consists of $u-iJ(u)$ with $u\in\ker({b})$.

The Clifford action is defined as follows.  The Clifford action of the Reeb vector field $v$ acts as $i\Omega$ on $\underline{\BC}$ and as $-i\Omega$ on $K^{-1}$.  In other words, $\underline{\BC}\oplus K^{-1}$ is the eigenbundle splitting of $\cl(v)$.  Let $\bo$ be the depicted unit-normed section of $\underline{\BC}$.  For any unit vector $u\in\ker({b})$, the Clifford action of $u$ is defined by
\begin{align*} \begin{array}{cccc}
\cl(u): &\bo &\mapsto &\frac{1}{\sqrt{2}}(u-iJ(u)) \\
&\frac{1}{\sqrt{2}}(u-iJ(u)) &\mapsto &-\bo
\end{array} ~, \end{align*}
and the Clifford action of $J(u)$ is defined by
\begin{align*} \begin{array}{cccc}
\cl(J(u)): &\bo &\mapsto &\frac{i}{\sqrt{2}}(u-iJ(u)) \\
&\frac{1}{\sqrt{2}}(u-iJ(u)) &\mapsto &i\bo
\end{array} ~. \end{align*}
It is straightforward to check that it does define a spin-c structure.

The set of spin-c structures has a free transitive action of $\HD{2}(Y;\BZ)$, see \cite[Appendix D]{ref_LM}.  The action is given by tensoring with a complex line bundle $E$.  It follows that a spinor bundle can be written as $E\oplus E K^{-1}$ for some Hermitian line bundle $E$.  The Clifford action is induced from that on the canonical spinor bundle, and the splitting $E\oplus EK^{-1}$ is the eigenbundle splitting of $\cl(v)$.

\subsubsection{Spin-c connection}\label{subsec_can_spinc_conn}
The canonical spinor bundle $\underline{\BC}\oplus K^{-1}$ carries a \emph{canonical spin-c connection}, which we denote by $\nabla_o$.  It is the unique spin-c connection whose associated Dirac operator $D_o$ annihilates $\Omega^{-1}\bo$, the depicted section $\bo$ rescaled by $\Omega^{-1}$.  The proof for its existence and uniqueness can be found in \cite[Lemma 10.1]{ref_Hutchings}.

The canonical connection can be written down explicitly in terms of a local trivialization.  Let $e_1,e_2,e_3$ be an oriented, orthonormal local frame for $TY$, where $e_3 = \Omega^{-1}v$ is the Reeb vector field multiplied by $\Omega^{-1}$.  Using the trivialization $\bo$ and $\frac{1}{\sqrt{2}}(e_1-ie_2)$, the local sections of $\underline{\BC}\oplus K^{-1}$ are identified with $\BC^2$ valued functions.  With respect to this trivialization, the canonical connection is given by
\begin{align}\label{eqn_can_conn_01}
\nabla_o\psi = \dd\psi + \oh\sum_{j\leq k}\theta_j^k\cl(e_j)\cl(e_k)\psi + \frac{i}{2}\big(\theta_1^2 - \Omega^2*\dd(\Omega^{-1}{b})\big)\psi
\end{align}
where $\theta_j^k$ is the coefficient $1$-form of the Levi-Civita connection, i.e.\ $\nabla^{\rm LC}e_j = \sum_k\theta_j^k\otimes e_k$.  We leave it to the reader to check that the expression does define a spin-c connection (see also \cite[\S II.4]{ref_LM}).

It follows from $\ker(\omega)\subset\ker(\dd{b})$ and the structure equation that
\begin{align*}
\theta_1^3(e_3) - \Omega^{-1}e_1(\Omega) &=0  ~, &\text{ and }&
&\theta_2^3(e_3) - \Omega^{-1}e_2(\Omega) &=0 ~.
\end{align*}
It follows from $\dd\omega = 0$ and the structure equation that
\begin{align*}   \theta_1^3(e_1) + \theta_2^3(e_2) +2\Omega^{-1}e_3(\Omega)  &= 0 ~.   \end{align*}
Using these relations, a direct computation shows that Dirac operator associated to (\ref{eqn_can_conn_01}) annihilates $\psi = (\Omega^{-1},0)$.  Hence, (\ref{eqn_can_conn_01}) defines the canonical connection.

With this understood, any spin-c connection on a spinor bundle $E\oplus EK^{-1}$ can be expressed as $\nabla_o\otimes\aE$ where $\aE$ is a unitary connection on $E$.

\subsection{Results of Part I}

We now recall the results of Part I \cite{ref_Ts2}.  Suppose that $(Y,a)$ is a contact three manifold with a conformally adapted metric $\dd s^2$.  Suppose that $E\to Y$ is a Hermitian line bundle with a unitary connection $\aE$.  For any $r\geq 0$, consider the one parameter family of Dirac operators on $E\oplus EK^{-1}$ defined by
\begin{align*}
D_r &= \cl(\nabla_o\otimes\aE - \frac{ir}{2}a) ~.
\end{align*}
Denote by $\sfa_a(r)$ the spectral flow from $D_0$ to $D_r$.  In Part I \cite{ref_Ts2}, we obtained the following estimate on $\sfa_a(r)$.

\begin{thm}\label{thm_part_I}
There exists a constant $c_1$ determined by the contact form $a$, the conformally adapted metric $\dd s^2$ and the connection $\aE$ such that for any $\rb\geq 2c_1$,
\begin{align*}
\big| \sfa_a(\rb) - \frac{\rb^2}{32\pi^2}\int_Y a\wedge\dd a \big| &\leq c_1\rb(\log\rb)^{\frac{9}{2}} + |\dot{\eta}(\rb)| + c_1\big|\int_{1}^\rb\ddot{\eta}(r)\dd r\big| ~.
\end{align*}
The function $\dot{\eta}(\rb)$ is defined by
\begin{align}\label{eqn_eta_00}
\dot{\eta}(\rb) = \rb^{-\oh}(\log\rb)^\oh\Big( \sum_{\psi\in\CV_\rb^+}\int_{\lambda_\psi}^{\frac{1}{3}\rb^\oh}e^{-20(\rb^{-1}\log\rb)u^2}\dd u - \sum_{\psi\in\CV_\rb^-}\int^{\lambda_\psi}_{-\frac{1}{3}\rb^\oh}e^{-20(\rb^{-1}\log\rb)u^2}\dd u\Big)
\end{align}
where $\CV_\rb^+$ consists of orthonormal eigensections of $D_\rb$ whose eigenvalue belongs to $(0,\frac{1}{3}\rb^\oh)$, $\CV_\rb^-$ consists of orthonormal eigensections of $D_\rb$ whose eigenvalue belongs to $(-\frac{1}{3}\rb^\oh,0)$, and $\lambda_\psi$ is the corresponding eigenvalue.  The function $\ddot{\eta}(r)$ is defined by
\begin{align}\label{eqn_eta_01}
\ddot{\eta}(r) = (r^{-\frac{3}{2}}\log r)\sum_{\psi\in\CV_r}(\lambda_\psi e^{-20(r^{-1}\log r)\lambda_\psi^2})
\end{align}
where $\CV_r$ consists of orthonormal eigensections of $D_r$ whose eigenvalue belongs to $(-\frac{1}{3}r^\oh,\frac{1}{3}r^\oh)$.
\end{thm}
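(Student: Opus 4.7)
The plan is to follow the heat-kernel approach to spectral flow pioneered in \cite{ref_APS1, ref_APS2, ref_APS3} and adapted to this setting in \cite{ref_Taubes_sf}. First I would smooth the signed crossing count defining $\sfa_a(\rb)$ by a Gaussian weight. Setting $t(r) = 20\, r^{-1}\log r$ and $F(x) = \tfrac{1}{\sqrt\pi}\int_{-\infty}^{x} e^{-u^2}\dd u$, define
\begin{align*}
\tilde\sfa_a(r) = \sum_{\psi} \big(F(t(r)^{1/2}\lambda_\psi(r)) - F(t(r)^{1/2}\lambda_\psi(0))\big).
\end{align*}
Since $F(\pm\infty) = \pm 1/2$ and the Bochner--Weitzenb\"ock formula for $D_r^2$ forces a spectral gap for $|\lambda|\gtrsim r^{1/2}$, the smoothed count $\tilde\sfa_a(r)$ differs from $\sfa_a(r)$ by an exponentially small error once one restricts to high modes. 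Applying Hellmann--Feynman $\dot\lambda_\psi = -\tfrac{i}{2}\langle\psi,\cl(a)\psi\rangle$ and rewriting the resulting sum as a trace would yield the schematic identity
\begin{align*}
\frac{d}{dr}\tilde\sfa_a(r) = \frac{t(r)^{1/2}}{\sqrt\pi}\,\mathrm{Tr}\Big(\tfrac{i}{2}\cl(a)\,e^{-t(r)D_r^2}\Big) + (\text{corrections from } \dot t).
\end{align*}

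Next I would evaluate this trace asymptotically. The Bochner--Weitzenb\"ock formula for $D_r$ has the schematic form $D_r^2 = \nabla^*\nabla + rQ + (\text{tame})$, where $Q$ is positive on the $E$ summand of the spinor bundle $E\oplus EK^{-1}$ and changes sign on $EK^{-1}$. Localizing the heat kernel at scale $r^{-1/2}$ and using a Mehler-type model for the transverse harmonic oscillator, the leading contribution to $\mathrm{Tr}(\cl(a) e^{-tD_r^2})$ is a local quasi-classical expression which, after integration against the adapted-metric identity $\dd a = 2{*}a$, reduces to $\tfrac{r}{16\pi^2}\int_Y a\wedge \dd a$. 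Integrating from $0$ to $\rb$ then yields the leading $\tfrac{\rb^2}{32\pi^2}\int_Y a\wedge \dd a$ up to a purely geometric remainder whose size, once the cutoff scale $t\sim r^{-1}\log r$ is fed in, I expect to be $\rb(\log\rb)^{9/2}$.

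The low-spectrum contributions that the Weitzenb\"ock bound cannot exclude live in $|\lambda|<\tfrac{1}{3} r^{1/2}$. Splitting the eigenvalue sum in $\tfrac{d}{dr}\tilde\sfa_a(r)$ accordingly, the high part feeds into the quasi-classical evaluation above, while the low part must be carried along explicitly. After integration in $r$ from $1$ to $\rb$ and a Fubini interchange of the sum over $\psi\in\CV_r^\pm$ with the Gaussian integral defining $F$, the low-part contribution at $r=\rb$ is exactly $\dot\eta(\rb)$ as in (\ref{eqn_eta_00}); the $\dot t$ corrections together with the ``crossing'' terms near the gap boundary $|\lambda|\approx\tfrac{1}{3} r^{1/2}$ rearrange into the integrand $\ddot\eta(r)$ of (\ref{eqn_eta_01}), so that bounding them in absolute value produces the last term in the estimate.

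The hardest part will be obtaining the sharp exponent $9/2$ of $\log\rb$ in the geometric remainder. The time scale $t(r)\sim r^{-1}\log r$ is forced by balancing $e^{-tr}$ against the subleading polynomial error terms in the heat-kernel expansion, so it cannot be improved. Once this scale is fixed, every Duhamel iteration, every commutator of $\cl(a)$ with the connection used in the on-diagonal expansion, and every finite-propagation cutoff confining the kernel to a ball of radius $(t\log r)^{1/2}$ contributes a fractional power of $\log\rb$. Tracking these contributions through the proof to arrive at the clean exponent $9/2$, rather than a less explicit $(\log\rb)^{C}$, will be the technical heart of the argument.
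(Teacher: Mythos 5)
The paper does not prove this theorem in-line: the proof is the single sentence ``It follows from Proposition 5.7 and Proposition 5.6 of \cite{ref_Ts2},'' i.e.\ the whole content is established in Part~I of the series. So there is no substantive in-paper argument to compare against; what you have written is essentially a proposed reconstruction of the argument in Part~I. Your high-level strategy --- replace the signed crossing count by a Gaussian-smoothed count at heat-time $t(r)\sim r^{-1}\log r$, differentiate in $r$ via Hellmann--Feynman, evaluate the trace quasi-classically to extract $\frac{r}{16\pi^2}\int_Y a\wedge\dd a$, and carry the sub-$r^{1/2}$ spectrum as an explicit remainder --- is almost certainly the correct shape, since the Gaussian factors $e^{-20(r^{-1}\log r)\lambda^2}$ and the $\frac{1}{3}r^{1/2}$ cutoff in (\ref{eqn_eta_00})--(\ref{eqn_eta_01}) bear the unmistakable fingerprint of exactly this regularization scheme.

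Nonetheless the proposal falls well short of a proof, and not only at the place you flag. You assert that ``the low-part contribution at $r=\rb$ is exactly $\dot\eta(\rb)$'' and that the $\dot t$ and boundary-crossing corrections ``rearrange into the integrand $\ddot\eta(r)$,'' but neither claim is derived, and an elementary check already shows the bookkeeping is not as clean as you suggest: the $\dot t$-term in $\frac{\partial}{\partial r}F(t(r)^{1/2}\lambda)$ carries prefactor $\frac{\dot t}{2\sqrt{\pi t}}\sim r^{-3/2}(\log r)^{1/2}$, whereas $\ddot\eta$ carries $r^{-3/2}(\log r)^{1}$. So $\ddot\eta$ cannot simply be the $\dot t$-term; the extra $(\log r)^{1/2}$ has to come from somewhere (most likely from the moving threshold $\frac{1}{3}r^{1/2}$ or from a Duhamel/localization error in the trace evaluation), and identifying that source is precisely the non-trivial step that makes the final estimate take the stated form rather than some other combination. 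Together with the $(\log\rb)^{9/2}$ exponent that you explicitly defer, this means the specific arithmetic that turns a heat-kernel heuristic into the exact statement of the theorem --- which is the entire content that the paper is importing from \cite{ref_Ts2} --- is left undone.
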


\begin{proof}
It follows from Proposition 5.7 and Proposition 5.6 of \cite{ref_Ts2}.
\end{proof}

\subsection{Main result}
Theorem \ref{thm_part_I} reduces the spectral flow estimate to $\dot{\eta}(r)$ and $\ddot{\eta}(r)$.  These two functions measure certain spectral asymmetry of $D_r$ within $(-\frac{1}{3}r^{\oh},\frac{1}{3}r^\oh)$.  The main goal of this paper is to show that the spectrum within $(-\frac{1}{3}r^{\oh},\frac{1}{3}r^\oh)$ are almost uniformly distributed when $a$ is the Thurston--Winkelnkemper contact form.  The rest of this paper is devoted to the proof of the following theorem, and its conditions (i) and (ii) will be assumed throughout the \emph{rest of this paper}.

\begin{thm}\label{thm_main_01}
Let $(\Sigma,\tau)$ be an open book.  Denote the three manifold (\ref{eqn_open_book_01}) by $Y$, and denote the Thurston--Winkelnkemper contact form by $a$.  Suppose that
\begin{enumerate}
\item $\dd s^2$ is a conformally adapted metric which is equal to (\ref{eqn_metric_01}) near the bindings, and whose conformal factor $\Omega$ is equal to
\begin{align}\label{eqn_Omega_01}
\big( 1+\frac{2\chi'(\theta)}{V}\frac{\zeta\wedge\tau^*\zeta}{\omega_\theta} \big)^{-1} ~;
\end{align}
this factor appears in (\ref{eqn_conformal_factor_01}) and (\ref{eqn_Reeb_01}), and is equal to $1$ near the bindings;
\item the unitary connection $\aE$ on $E\to Y$ is gauge equivalent to the trivial connection near the bindings; note that any bundle is topologically trivial near the bindings.
\end{enumerate}
Then, there exists a constant $c_2$ determined by $a$, $\dd s^2$ and $\aE$ such that
\begin{align*}
\dot{\eta}(r) &\leq c_2 r(\log r)^\oh   &\text{and}&   &\ddot{\eta}(r) &\leq c_2\log r
\end{align*}
for any $r\geq c_2$.
\end{thm}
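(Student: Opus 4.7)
The plan is to follow the roadmap outlined in the introduction, namely to obtain an almost-uniform bound on the spectral density of $D_r$ in the window $(-\ot r^\oh,\ot r^\oh)$ by comparing $D_r$ with a \emph{fibered} modification $\ex D_r$ and with explicit local model operators near the bindings. Once such a uniform density bound is in hand, the asserted estimates on $\dot\eta(r)$ and $\ddot\eta(r)$ will follow by elementary Gaussian summation against \eqref{eqn_eta_00} and \eqref{eqn_eta_01}.

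First I would construct the fibered model. Instead of filling $\pl(\Sigma\times_\tau S^1)$ with the solid tori $\coprod S^1\times B$ as in \eqref{eqn_open_book_01}, I compactify $\Sigma\times_\tau S^1$ to a closed three manifold $\ex Y$ that still carries a global map $\ex Y\to S^1$ extending the coordinate $\theta$; concretely one caps off each boundary torus by a disk bundle in the $\rho$-direction while leaving the $\theta$-fibration intact. On $\ex Y$ I equip a stable Hamiltonian structure $(\ex b,\ex\omega)$ and an extension of the connection $\aE$ (which is trivial near the bindings by hypothesis (ii)) so that $\ex D_r$ agrees with $D_r$ on $\Sigma\times_\tau S^1$ and so that the two Bochner--Weitzenböck formulas differ only in the capped region.

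Next I apply the Vafa--Witten trick on $\ex Y$. Because $\theta:\ex Y\to S^1$ is globally defined, for each $n\in\BZ$ the unitary gauge transformation $\psi\mapsto e^{in\theta}\psi$ is well-defined globally and conjugates $\ex D_r$ to $\ex D_r-\cl(n\,\dd\theta)$. Combined with the Bochner--Weitzenböck lower bound on $\ex D_r^2$ in the window of interest, this $\BZ$-shift together with a min-max argument shows that the number of eigenvalues of $\ex D_r$ in any interval $[\lambda,\lambda+1]\subset(-\ot r^\oh,\ot r^\oh)$ is bounded by a constant independent of $\lambda$ and $r$. In parallel, on the local model $S^2\times S^1$ mimicking the neighborhood $\coprod S^1\times B$ of the bindings in $Y$ (and its analogue for $\ex Y$), separation of variables along the $S^1$-direction reduces the eigenvalue problem to an explicit ODE in $\rho$, so the spectrum is known essentially in closed form and is likewise uniformly distributed at density $O(1)$ in the same window.

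The heart of the argument will be the gluing comparison. Using a partition of unity adapted to the decomposition $Y=(\Sigma\times_\tau S^1)\cup(\coprod S^1\times B)$, I decompose any eigensection $\psi$ of $D_r$ with eigenvalue in the window into pieces $\psi_{\rm tor}$ concentrated on the mapping torus and $\psi_{\rm bd}$ concentrated near each binding. The localization estimates on low-energy eigensections proved in Part I, together with the agreement $\ex D_r|_{\Sigma\times_\tau S^1}=D_r|_{\Sigma\times_\tau S^1}$ and the agreement of $D_r$ with its $S^2\times S^1$ model near each binding, imply that $\psi_{\rm tor}$ is a quasimode of $\ex D_r$ and $\psi_{\rm bd}$ is a quasimode of the $S^2\times S^1$ model, with cutoff errors $o(1)$. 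Conversely, quasimodes of $\ex D_r$ and of the local models produce quasimodes of $D_r$ by the reverse cut-and-paste. Translating this into a two-sided matching of eigenvalue counts yields that the spectral counting function of $D_r$ on any unit interval inside $(-\ot r^\oh,\ot r^\oh)$ is bounded by a constant.

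The main obstacle I anticipate is exactly this gluing step: one needs the Bochner--Weitzenböck potential in the overlap annulus $\{1\le\rho\le 1+50\delta\}\times S^1\times S^1$ to be strong enough (proportional to $r$) to prevent low-energy eigensections from concentrating on the overlap, so that the cut-and-paste loses only $O(r^{-N})$ in norm for any fixed $N$. Verifying this will require explicit control of the zeroth-order term in the Bochner--Weitzenböck formula for $D_r$ in terms of the Clifford contraction of $\dd a$ together with curvature of the extended connection, and checking that the hypotheses on the metric \eqref{eqn_metric_01} and the conformal factor \eqref{eqn_Omega_01} make this contraction pointwise bounded below by a positive multiple of $r$ outside a neighborhood of the bindings of size $O(r^{-\oh})$. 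Once this quantitative concentration is established, combining the unit-interval spectral bound with the Gaussian weights in \eqref{eqn_eta_00} and \eqref{eqn_eta_01} yields $\dot\eta(r)\leq c_2 r(\log r)^\oh$ and $\ddot\eta(r)\leq c_2\log r$, completing the proof.
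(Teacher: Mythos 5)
Your architecture matches the paper's (fibered model $\ex Y$, Vafa--Witten gauge shift, explicit $S^2\times S^1$ local models, gluing comparison of eigenvalue counts), but the final deduction has a real gap: you claim that once you have a "uniform density bound" on the spectrum of $D_r$ in the window, "the asserted estimates on $\dot\eta(r)$ and $\ddot\eta(r)$ will follow by elementary Gaussian summation." This is false, for two reasons. First, the density per unit interval is $O(r)$, not $O(1)$ as you assert (there are $\sim r^{3/2}$ eigenvalues in a window of width $\sim r^{1/2}$; see Corollary~\ref{cor_Vafa_Witten}). Second, and more importantly, even the correct density bound cannot produce the claimed estimates. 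Both $\dot\eta$ and $\ddot\eta$ are \emph{asymmetry} measures: the summand in \eqref{eqn_eta_01} is odd in $\lambda_\psi$, and plugging in only a one-sided density bound of $O(r)$ per unit length yields
\begin{align*}
|\ddot\eta(r)|\ \lesssim\ (r^{-3/2}\log r)\cdot r\int_0^{r^{1/2}}u\,e^{-20(r^{-1}\log r)u^2}\,\dd u\ \sim\ r^{1/2}\log r,
\end{align*}
which overshoots the target $c_2\log r$ by a factor of $r^{1/2}$. A density bound alone cannot see cancellation.

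What the paper actually extracts from the Vafa--Witten argument (Theorem~\ref{thm_Vafa_Witten}) is not a density bound but an \emph{approximate periodicity}: $\lambda_{j+\fj}-\lambda_j = 1/V + O(r^{-1/2})$ with $\fj\sim r$. This lets one pair up positive and negative eigenvalues of $\ex D_r$ so the odd summand in $\ddot\eta(\ex\CV_r)$ cancels term by term with an error of size $O(1+r^{-1/2}k)$ in the $k$-th period, giving the telescoping sum in Lemma~\ref{lem_final_02}. Similarly, Lemmas~\ref{lem_final_03} and \ref{lem_final_04} use the explicit lattice structure of the local-model spectra (the function $\ck\gamma_{k,m}$, and the arithmetic progression $\frac{r}{2}-\frac{k}{V}$ with multiplicities $2[r]+1$) to produce cancellation via an Euler--Maclaurin type comparison with a continuous integral. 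The gluing theorem (Theorem~\ref{thm_glue_01}) then converts these to bounds on $\dot\eta(r),\ddot\eta(r)$ via Lemma~\ref{lem_final_01}. To fix your argument, you must replace the density-bound step by (a) recording the quasi-periodicity of the $\ex D_r$ spectrum and the explicit arithmetic structure of the local-model spectra, and (b) carrying out the pairing/cancellation estimate rather than a crude Gaussian count. Your anticipated obstacle (a uniformly $r$-large Bochner--Weitzenb\"ock potential on the overlap annulus forcing $O(r^{-N})$ concentration) is also not how the paper handles the gluing: the potential is \emph{not} uniformly $r$-positive there (otherwise the local models would be unnecessary); instead the smallness of $\psi_{j,\ell}$ near the bindings in Lemma~\ref{lem_glue_03} comes from $L^2$-orthogonality to the approximate local-model eigensections $\ck\varphi^{\appr}_{k,m}$ and the spectral gaps of Lemma~\ref{lem_glue_01}.
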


By combining this theorem with Theorem \ref{thm_part_I}, there exists a constant $c_3$ such that
\begin{align}\label{sf_est_01}
\big| \sfa_a(r) - \frac{r^2}{32\pi^2}\int_Y a\wedge\dd a \big| &\leq c_3 r(\log r)^{\frac{9}{2}}
\end{align}
for any $r\geq c_3$.

The technical conditions (i) and (ii) of Theorem \ref{thm_main_01} are not crucial for the spectral flow function $\sfa_a(r)$.  The reason goes as follows.
\begin{enumerate}
\item As explained in \cite[\S2.1]{ref_Ts2}, the spectral flow function $\sfa_a(r)$ is invariant under the conformal change of metric.  It follows that the spectral flow estimate (\ref{sf_est_01}) works for any adapted metric that is equal to (\ref{eqn_metric_01}) near the bindings.  Notice that we do not claim that $\dot{\eta}(r)$ and $\ddot{\eta}(r)$ are invariant under the conformal change of metric.
\item According to \cite[Proposition 5.9]{ref_Ts2}, different choices of $\aE$ lead to a $\CO(r)$ difference of the spectral flow function.
\end{enumerate}
With this understood, we conclude the following theorem.

\begin{thm}
Suppose that $a$ is a Thurston--Winkelnkemper contact form \cite{ref_TW}.  Suppose that $\dd{s}^2$ is an adapted metric which is equal to (\ref{eqn_metric_01}) near the bindings.  Let $D$ be a spin-c Dirac operator.  Then, there exists a constant $c_4$ determined by $a$, $\dd{s}^2$ and $D$ such that
\begin{align*}
\big|\sfa_a(r)-\frac{r^2}{32\pi^2}\int_Y a\wedge\dd a\big| \leq c_4 r(\log r)^{\frac{9}{2}} ~.
\end{align*}
for any $r\geq c_4$.
\end{thm}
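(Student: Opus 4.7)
The plan is to derive this theorem as a direct consequence of the spectral flow estimate (\ref{sf_est_01}), which itself follows from Theorem \ref{thm_main_01} via Theorem \ref{thm_part_I}. The substantive mathematical content is contained entirely in Theorem \ref{thm_main_01}; what remains here is bookkeeping, namely relaxing the restrictive hypotheses (i) and (ii) to arrive at a statement valid for an arbitrary adapted metric of the prescribed form near the bindings and an arbitrary spin-c Dirac operator $D$.

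First I would relax hypothesis (i) on the conformal factor. Starting from an adapted metric $\dd s^2$ that coincides with (\ref{eqn_metric_01}) near the bindings, I set $\widetilde{\dd s}^2 = \Omega^2\dd s^2$ where $\Omega$ is the function (\ref{eqn_Omega_01}). By (\ref{eqn_conformal_factor_01}), $\Omega$ lies between $\tfrac{9}{10}$ and $\tfrac{10}{9}$; moreover, the cutoff $\chi(\theta)$ is locally constant on the collar neighborhood of $\pl\Sigma$ where $\tau$ is already the identity, so $\chi'(\theta)=0$ there and $\Omega\equiv 1$ near the bindings. Hence $\widetilde{\dd s}^2$ still coincides with (\ref{eqn_metric_01}) near the bindings, and it is conformally adapted with conformal factor equal to (\ref{eqn_Omega_01}), fulfilling condition (i). The conformal invariance of the spectral flow function recalled from \cite[\S2.1]{ref_Ts2} then transfers the estimate (\ref{sf_est_01}) obtained for $\widetilde{\dd s}^2$ back to the original metric $\dd s^2$.

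Second I would relax hypothesis (ii) on the connection. For the given $D$, decompose the spinor bundle as $E\oplus EK^{-1}$, so $D$ is associated to a spin-c connection $\nabla_o\otimes\aE$ for some unitary $\aE$ on $E$. Each attaching handle $S^1\times B$ is homotopy equivalent to $S^1$, so $\HD{2}(S^1\times B;\BZ)=0$ and $E$ is topologically trivial there; I choose a second unitary connection $\aE'$ on $E$ that is gauge equivalent to the trivial connection on a neighborhood of each binding (obtained by cutting off $\aE$ against a trivialization). The Dirac operator $D'$ associated to $\aE'$ now satisfies both (i) and (ii), and Theorem \ref{thm_main_01} combined with Theorem \ref{thm_part_I} applies to give (\ref{sf_est_01}) for $D'$. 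By \cite[Proposition 5.9]{ref_Ts2}, changing $\aE'$ back to $\aE$ alters the spectral flow function by at most $\CO(r)$, which is absorbed into the $r(\log r)^{9/2}$ error term.

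Assembling: under the hypotheses of Theorem \ref{thm_main_01}, the estimate $|\dot\eta(r)|\leq c_2 r(\log r)^{1/2}$ is dominated by $r(\log r)^{9/2}$, and $\int_1^r|\ddot\eta(s)|\,\dd s\leq c_2\int_1^r\log s\,\dd s = \CO(r\log r)$ is likewise dominated, so Theorem \ref{thm_part_I} yields (\ref{sf_est_01}); the two relaxations above then produce the stated conclusion with a new constant $c_4$. The real obstacle is Theorem \ref{thm_main_01} itself, whose proof occupies the remaining sections of the paper and rests on the Vafa--Witten style gauge transform on the auxiliary fibered manifold $\ex Y$, the explicit spectral analysis of the $S^2\times S^1$ local models, and the gluing comparison of Theorem \ref{thm_glue_01}; the deduction of the present theorem from it is routine.
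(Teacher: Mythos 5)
Your proposal is correct and follows essentially the same route as the paper: combine Theorem \ref{thm_main_01} with Theorem \ref{thm_part_I} to obtain (\ref{sf_est_01}) under hypotheses (i) and (ii), then remove (i) by conformal invariance of $\sfa_a(r)$ and remove (ii) via the $\CO(r)$ comparison of \cite[Proposition 5.9]{ref_Ts2}. One small slip in your justification of why $\Omega\equiv 1$ near the bindings: the reason is not that ``$\chi'(\theta)=0$ on the collar'' (this does not parse, since $\chi$ is a function of $\theta$ alone, independent of position in $\Sigma$), but rather that $\tau$ is the identity on the collar, so $\tau^*\zeta=\zeta$ there and the term $\zeta\wedge\tau^*\zeta$ in (\ref{eqn_Omega_01}) vanishes identically; the conclusion $\Omega\equiv 1$ is what the paper states and your argument goes through unchanged.
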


\begin{rmk}
The conformal factor of Theorem \ref{thm_main_01}(i) shows up naturally.  In the construction of \S\ref{sec_TW_01}, there are two volume forms on $\Sigma\times_\tau S^1$:
\begin{align*}
\oh a\wedge\dd a  \qquad\text{ and }\qquad  \dd\theta\wedge\omega_\theta ~.
\end{align*}
Their ratio defines a function which is equal to $V$ on the boundary of $\Sigma\times_\tau S^1$.  This particular conformal factor plays a key role in the proof of Theorem \ref{thm_Vafa_Witten} below.
\end{rmk}

\section{From Open Book to Mapping Torus}\label{sec_sh_model}
Under the setting of Theorem \ref{thm_main_01}, the main purpose of this section is to construct a model which captures the spectrum of $D_r = \cl(\nabla_o\otimes\aE - \frac{ir}{2}a)$.  For simplicity, assume from now on that $\Sigma$ has \emph{only one} boundary component .  If $\Sigma$ has more than one boundary components, one simply needs to duplicate the construction and the argument.

The model consists of the following objects (which will be introduced momentarily):
\begin{itemize}
\item another compactification $\ex{Y}$ of $\Sigma\times_\tau S^1$, which is a surface bundle over $S^1$;
\item a stable Hamiltonian structure $(\ex{a},2\ex{\omega})$ on $\ex{Y}$, and a conformally adapted metric $\dd s^2$ on $\ex{Y}$;
\item a spinor bundle $(\ex{E}\oplus\ex{E}\ex{K}^{-1})\otimes\ex{L}_r\to\ex{Y}$, and a Dirac operator $\ex{D}_r$ on it.
\end{itemize}
For brevity, the model $(\ex{Y}\to S^1,\ex{a},\ex{\omega},\dd s^2,(\ex{E}\oplus\ex{E}\ex{K}^{-1})\otimes\ex{L}_r,\ex{D}_r)$ will be denoted by $(\ex{Y},\ex{D}_r)$.  The Dirac operator $\ex{D}_r$ has the following salient features:
\begin{enumerate}
\item on $\Sigma\times_\tau S^1$, $\ex{D}_r$ is identically the same as $D_r$;
\item the ``{small}" spectrum of $\ex{D}_r$ is almost uniformly distributed;  the precise statement appears in Theorem \ref{thm_Vafa_Witten}.
\end{enumerate}

\subsection{The mapping torus}
To start, compactify $\Sigma$ by attaching a disk to its boundary.  To be more precise, let $\{\rho e^{it}~|~\rho\geq1, e^{it}\in S^1\}$ be the coordinate on a collar neighborhood of $\pl\Sigma$.  The attaching disk is given by $B = \{\rho e^{it} | 0\leq\rho<1+50\delta\}$.  The compactification is done by identifying the coordinate.  Denote the resulting closed surface by $\ex{\Sigma}$.

Since $\tau$ is the identity on a collar neighborhood of $\pl\Sigma$, it naturally extends to a monodromy $\ex{\tau}$ of $\ex{\Sigma}$ by $\ex{\tau}|_B = \BI_B$.  Let $\ex{Y}$ be the mapping torus $\ex{\Sigma}\times_{\ex{\tau}} S^1$.  Equivalently,
\begin{align*}  \ex{Y} = (\Sigma\times_\tau S^1)\cup\big(\coprod_{|\pl\Sigma|=1}B\times S^1\big)  \end{align*}
where $B$ is attached to $\Sigma$.  It is a surface bundle over $S^1$.  Denote the fibration map $\ex{Y}\to S^1$ by $e^{i\theta}$.

\subsection{The extension of the $1$-form $a$}\label{subsec_ext_oneform}
The $1$-form $a$ can be extended to $\ex{Y}$.  The extension will be denoted by $\ex{a}$.

Near the boundary of $\Sigma\times_\tau S^1$, the $1$-form $a$ (\ref{eqn_ob_contact_01}) is equal to $V\dd\theta + (2-\rho)\dd t$.  Choose a smooth function $\ex{g}(\rho)$ of $\rho\in[0,1+50\delta)$ such that
$$ \ex{g}(\rho) = 0 \text{ when }\rho\leq50\delta \quad\text{ and }\quad \ex{g}(\rho) = 2-\rho \text{ when }\rho\geq1-50\delta ~. $$  On the attaching handle $B\times S^1$, the $1$-form $\ex{a}$ is defined by
\begin{align}\label{eqn_ext_a}
\ex{a} = V\dd\theta + \ex{g}(\rho)\dd t ~.
\end{align}
It is clear that $\ex{a}$ is a smooth $1$-form on $\ex{Y}$.  Notice that $\ex{a}$ is \emph{no longer} a contact form on $\ex{Y}$.

\subsection{The extension of the metric}\label{subsec_ext_metric}
Near the boundary of $\Sigma\times_\tau S^1$, the metric (\ref{eqn_metric_01}) is equal to $a^2 + (\dd\rho)^2 + (\oh\dd t)^2$.  For any scalar $\sigma\in(\frac{9}{10},\frac{11}{10})$, choose a smooth function $\ex{h}_{\sigma}(\rho)$ of $\rho\in[0,1+50\delta)$ such that
\begin{itemize}
\item $\ex{h}_{\sigma}(\rho) = \oh\rho^2$ when $\rho\leq50\delta$;
\item $\ex{h}_{\sigma}(\rho) = \sigma + \oh(\rho-2)$ when $\rho\geq1-50\delta$;
\item $\ex{h}'_{\sigma}(\rho) > 0$ for any $\rho\in(0,1)$, where $\ex{h}'_{\sigma}(\rho)$ means the derivative of $\ex{h}_{\sigma}(\rho)$ in $\rho$.
\end{itemize}
Moreover, the functions $\{\ex{h}_\sigma(\rho)\}_{\frac{9}{10}<\sigma<\frac{11}{10}}$ have uniformly bounded $\CC^k$-norm for any non-negative integer $k$.  Namely, there exist constants $c_k$ such that
\begin{align}\label{rmk_delta_r}
\sup_{\sigma\in(\frac{9}{10},\frac{11}{10})}\sup_{\rho\in[0,1+50\delta)}\big| \frac{\pl^k\ex{h}_\sigma(\rho)}{\pl\rho^k} \big| \leq c_k ~.
\end{align}

The metric on the attaching handle $B\times S^1$ is taken to be
\begin{align} \label{eqn_ext_metric}
\dd s^2 &= \ex{a}^2 + (\dd\rho)^2 + (\ex{h}'_{\sigma}(\rho)\dd t)^2 ~,
\end{align}
and its volume form is
\begin{align} \label{eqn_ext_volume}
\ex{h}'_{\sigma}(\rho)\,\dd\theta\wedge\dd t\wedge\dd\rho ~.
\end{align}
It is clear that the construction gives a smooth extension of the Riemannian metric on $\Sigma\times_\tau S^1$ to $\ex{Y}$.  The precise choice of $\sigma$ will be made later.

\subsection{The stable Hamiltonian structure}\label{subsec_sh_str}
The $2$-form $\oh\dd a$ also admits an extension $\ex{\omega}$ by:
\begin{align*}
\ex{\omega} &= \begin{cases}
\oh\dd a  &\text{on }\Sigma\times_\tau S^1 ~, \\
\ex{h}'_\sigma\dd t\wedge\dd\rho  &\text{on }B\times S^1  ~.
\end{cases} \end{align*}
It is straightforward to check that $(\ex{a},2\ex{\omega})$ forms a stable Hamiltonian structure, and the metric defined in \S\ref{subsec_ext_metric} is conformally adapted to it.

\subsubsection{The canonical spin-c structure}\label{subsec_ext_canspin}
As explained in \S\ref{subsec_can_spinc}, there is a canonical spin-c structure determined by $(\ex{a}$, $2\ex{\omega})$ and the metric $\dd s^2$.  Denote the canonical spinor bundle by $\canspinex\to\ex{Y}$.

It is convenient to fix a trivialization of $\ex{K}^{-1}\to B\times S^1\subset\ex{Y}$.  Consider the unitary vector field
\begin{align}\label{eqn_ext_trivial_K}
\sqrt{2}\pl_B &= \frac{1}{\sqrt{2}}e^{it} \big( \pl_\rho + \frac{i}{\ex{h}'_{\sigma}}(\pl_t-\frac{\ex{g}}{V}\pl_\theta) \big) ~.
\end{align}
The expression is smooth when $\rho>0$.  When $\rho<50\delta$, it is equal to $\frac{1}{\sqrt{2}}(\pl_x - i\pl_y)$ in terms of the rectangular coordinate $x-iy = \rho e^{it}$.

\subsubsection{The extension of $E$}
The Hermitian line bundle $E\to Y$ is assumed to be trivial near the bindings, and the connection $\aE$ is assumed to be the exterior derivative.  It follows that the bundle and the connection can naturally be regarded as being defined over $\ex{Y}$.  Denote the bundle by $\ex{E}\to\ex{Y}$, and the connection by $\aeE$.

\subsection{The degree $r$ bundle}\label{subsec_deg_r}
The purpose of this subsection is to construct a Hermitian line bundle $\ex{L}_r\to\ex{Y}$ with a unitary connection $\aer$ for any $r\geq20$.  The curvature of $\aer$ supports only on the attaching handle $B\times S^1$, and is proportional to $r$.

Since $r > 20$, $\frac{9}{10} < \frac{[r]}{r} < \frac{11}{10}$.  Set the constant $\sigma$ to be
\begin{align}	\sigma = \frac{[r]}{r} ~.		\end{align}
Although $\sigma$ depends on $r$, the function $\ex{h}_\sigma(\rho)$ is \emph{independent of $r$} in the sense of (\ref{rmk_delta_r}).

To construct $\ex{L}_r$, consider the trivial bundle over $\Sigma\times_\tau S^1$ and $B\times S^1$.  Let $\bo_\Sigma$ and $\bo_B$ be the depicted unitary sections, respectively.  On the overlap region $\{1\leq\rho<1+50\delta\}$, identify the bundles by the transition rule
\begin{align}\label{eqn_Lr_trans}    e^{i[r]t}\cdot\bo_\Sigma = \bo_B ~.    \end{align}
The unitary connection $\aer$ is defined as follows:
\begin{itemize}
\item over $\Sigma\times_\tau S^1$, the connection $\aer$ is $\dd$ with respect to $\bo_\Sigma$;
\item over $B\times S^1$, the connection $\aer$ is $\dd + ir(\ex{h}_{\sigma} + \oh\ex{g})\dd t$ with respect to $\bo_B$.
\end{itemize}
When $1\leq\rho<1+50\delta$,
\begin{align*}	 ir(\ex{h}_\sigma + \oh\ex{g})\dd t = i(r\sigma)\dd t = i[r]\dd t	~.	\end{align*}
It follows that $\aer$ obeys the transition rule (\ref{eqn_Lr_trans}), and hence defines a connection on $\ex{L}_r$.

\subsection{The Dirac operator on $(\ex{E}\oplus\ex{E}\ex{K}^{-1})\otimes\ex{L}_r$}\label{subsec_ext_Lr}
The bundle $(\ex{E}\oplus\ex{E}\ex{K}^{-1})\otimes\ex{L}_r\to\ex{Y}$ is also a spinor bundle.  Let $\nabla_\can$ be the canonical connection on $\underline{\BC}\oplus\ex{K}^{-1}$.  The connection ${\nabla}_\can\otimes\aeE\otimes\aer$ is a spin-c connection on $(\underline{\BC}\oplus\ex{K}^{-1})\otimes\ex{E}\otimes\ex{L}_r$.  Perturb the connection by $-\frac{ir}{2}\ex{a}$, and consider the corresponding Dirac operator.  Namely,
\begin{align} \left\{ \begin{aligned}
\ex{\nabla}_r &= {\nabla}_\can\otimes\aeE\otimes\aer - \frac{ir}{2}\ex{a} ~, \\
\ex{D}_r &= \cl\circ({\nabla}_\can\otimes\aeE\otimes\aer - \frac{ir}{2}\ex{a}) ~.
\end{aligned} \right. \end{align}

The Weitzenb\"ock formula for $\ex{D}_r$ reads
\begin{align*}
\ex{D}_r^2 \psi = \ex{\nabla}_r^*\ex{\nabla}_r\psi + \ex{\kappa}(\psi) + \cl(\ex{F}_{r} - \frac{ir}{2}\dd\ex{a})(\psi)
\end{align*}
where $\ex{F}_{r}$ is the curvature of $\aer$.  Here $\ex{\kappa}$ consists of the scalar curvature, the curvature of $\nabla_\can$ and the curvature of $\aeE$; in particular, $\ex{\kappa}$ is an operator independent of $r$.  On $\Sigma\times_\tau S^1$,
\begin{align*}
\cl(\ex{F}_{r} - \frac{ir}{2}\dd\ex{a}) = ir{\Omega}^{-1}\cl(\ex{a})
\end{align*}
where $\Omega$ is the conformal factor (\ref{eqn_Omega_01}).  On $B\times S^1$,
\begin{align*}
\cl(\ex{F}_{r} - \frac{ir}{2}\dd\ex{a}) &= \cl\big( ir(\ex{h}'_{\sigma} + \frac{\ex{g}'}{2})\dd\rho\wedge\dd t - ir\frac{\ex{g}'}{2}\dd\rho\wedge\dd t \big) \\
&= ir\cl(\ex{h}'_{\sigma}\,\dd\rho\wedge\dd t) = ir{\Omega}^{-1}\cl(\ex{a}) ~.
\end{align*}
It follows that the Weitzenb\"ock formula becomes
\begin{align}\label{eqn_ext_Weitzenbock}
\ex{D}_r^2 \psi = \ex{\nabla}_r^*\ex{\nabla}_r\psi + \ex{\kappa}(\psi) + ir{\Omega}^{-1}\cl(\ex{a})(\psi) ~.
\end{align}
The operator $i\cl(\ex{a})$ acts diagonally on $(\ex{E}\oplus\ex{E}\ex{K}^{-1})\otimes\ex{L}_r$.  It acts as $-{\Omega}^{-1}$ on the $\ex{E}\ex{L}_r$ summand, and acts as ${\Omega}^{-1}$ on the $\ex{E}\ex{K}^{-1}\ex{L}_r$ summand.

\begin{rmk}\label{rmk_identification_01}
The spinor bundle $(\ex{E}\oplus\ex{E}\ex{K}^{-1})\otimes\ex{L}_r$ is topologically trivial over the attaching handle $B\times S^1$.  The bundle $\ex{E}$ is trivialized by an $\aeE$-parallel, unit-normed section.  The bundle $\ex{L}_r$ is trivialized by $\bo_{B}$ as in \S\ref{subsec_deg_r}.  The bundle $\ex{K}^{-1}$ is trivialized by $\sqrt{2}\pl_B$ (\ref{eqn_ext_trivial_K}).  They induce a unitary trivialization of $(\ex{E}\oplus\ex{E}\ex{K}^{-1})\otimes\ex{L}_r$ over $B\times S^1\subset\ex{Y}$, and the sections on $B\times S^1$ can be identified with $\BC^2$ valued functions.
\end{rmk}

\section{Eigenvalue Distribution of $\ex{D}_r$}\label{sec_Vafa_Witten}
The main purpose of this section is to show that the ``small eigenvalues" of $\ex{D}_r$ are almost ``uniformly distributed".  The strategy here is learned from Vafa and Witten \cite{ref_VW}.  They applied the Atiyah--Patodi--Singer index theorem \cite{ref_APS1, ref_APS3} to prove that there cannot be large gaps in the Dirac spectrum.

The following proposition gives an integral estimate on the eigensections.

\begin{prop}\label{prop_ext_estimate}
There exists a constant $c_1$ determined by the stable Hamiltonian structure $(\ex{a}, \ex{\omega})$, the metric $\dd s^2$ and the connection $\aeE$ such that the following holds.  For any $r\geq c_1$, suppose that $\psi$ is an eigensection of $\ex{D}_r$ whose eigenvalue $\lambda$ satisfies $|\lambda|^2\leq \frac{3}{4}r$.  Then
\begin{align*}
\int_{\ex{Y}}|\beta|^2 + r^{-1}\int_{\ex{Y}}|\ex{\nabla}_r\beta|^2 &\leq c_1r^{-1}\int_{\ex{Y}}|\alpha|^2
\end{align*}
where $\alpha$ is the $\ex{E}\ex{L}_r$ component of $\psi$, and $\beta$ is the $\ex{E}\ex{K}^{-1}\ex{L}_r$ component of $\psi$.
\end{prop}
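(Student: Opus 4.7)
The starting point would be the Weitzenb\"ock identity (\ref{eqn_ext_Weitzenbock}) applied to the eigenequation $\ex{D}_r^2\psi=\lambda^2\psi$; note that $i\cl(\ex{a})$ is diagonal, acting as $-\Omega^{-1}$ on the $\alpha$-summand and $+\Omega^{-1}$ on the $\beta$-summand. Since $\Omega\le 10/9$, we have $\Omega^{-2}\ge 81/100$, so the hypothesis $\lambda^2\le \tfrac{3}{4}r = 75r/100$ produces the crucial positive gap $r\Omega^{-2}-\lambda^2\ge 6r/100>0$. The plan is to pair the identity with two different test sections and extract complementary information from each pairing.

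First, I would pair with $\psi$ itself and integrate by parts to obtain
\[
\int_{\ex{Y}}|\ex{\nabla}_r\psi|^2 + r\int_{\ex{Y}}\Omega^{-2}(|\beta|^2-|\alpha|^2) + \int_{\ex{Y}}\langle\ex{\kappa}\psi,\psi\rangle = \lambda^2\|\psi\|^2.
\]
Absorbing the zeroth-order $\ex{\kappa}$-error into the LHS for $r\ge c_1$ and using the positive gap above, this yields the crude but decisive a priori bound $\int|\ex{\nabla}_r\psi|^2 \le c\, r\int|\alpha|^2$ (and also $\int|\beta|^2 \le c\int|\alpha|^2$, which is not yet sharp).

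Next, I would pair $\ex{D}_r^2\psi=\lambda^2\psi$ with the truncation $(0,\beta)$ in place of $\psi$. Integration by parts transforms the Bochner term into $\int\langle\ex{\nabla}_r\psi,\ex{\nabla}_r(0,\beta)\rangle$. The technical core is to decompose $\ex{\nabla}_r$ relative to the splitting $\ex{E}\ex{L}_r\oplus \ex{E}\ex{K}^{-1}\ex{L}_r$ into a block-diagonal connection plus a zeroth-order off-diagonal endomorphism $N$. From the formula (\ref{eqn_can_conn_01}), the off-diagonal part originates only from the Clifford factors $\cl(e_1)\cl(e_3)$ and $\cl(e_2)\cl(e_3)$, whose coefficients $\theta_1^3,\theta_2^3$ are bounded uniformly in $r$; the perturbations $\aeE$, $\aer$, and $-\tfrac{ir}{2}\ex{a}$ are scalar and diagonal, so contribute nothing off-diagonal. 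Expanding the pairing accordingly, the diagonal piece contributes $\int|\ex{\nabla}_r\beta|^2$ (with $\ex{\nabla}_r\beta$ interpreted as the induced line-bundle connection on $\ex{E}\ex{K}^{-1}\ex{L}_r$), and the cross terms are bounded via Cauchy--Schwarz by $c(\|\ex{\nabla}_r\psi\|\,\|\beta\| + \|\alpha\|\,\|\ex{\nabla}_r\beta\|)$.

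Combining everything and invoking the positive gap, the refined inequality becomes
\[
\int_{\ex{Y}}|\ex{\nabla}_r\beta|^2 + \frac{6r}{100}\int_{\ex{Y}}|\beta|^2 \le c\,\|\ex{\nabla}_r\psi\|\,\|\beta\| + c\,\|\alpha\|\,\|\ex{\nabla}_r\beta\| + c\,\|\psi\|\,\|\beta\|.
\]
Inserting the first-step estimate $\|\ex{\nabla}_r\psi\|\le c\,r^{1/2}\|\alpha\|$ turns the first RHS term into $c\,r^{1/2}\|\alpha\|\,\|\beta\|$, and AM--GM with weights of order $r$ and $1$ respectively absorbs $\tfrac{1}{2}\|\ex{\nabla}_r\beta\|^2$ plus a small multiple of $r\|\beta\|^2$ into the LHS. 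The residue is precisely $\|\ex{\nabla}_r\beta\|^2 + r\|\beta\|^2 \le c\|\alpha\|^2$, equivalent to the claim. The main obstacle is the bookkeeping in the second pairing: because $\ex{\nabla}_r$ does not preserve the $\alpha\oplus\beta$ splitting, integration by parts unavoidably brings $\ex{\nabla}_r\alpha$ into the RHS, and one must both identify the mixing as a zeroth-order $N$ with $r$-uniform bound and use the preliminary $\|\ex{\nabla}_r\psi\|$ estimate in tandem with the gap $r\Omega^{-2}-\lambda^2 > 0$ to absorb it.
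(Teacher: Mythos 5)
Your proof is correct and follows the standard Taubes-style Weitzenb\"ock argument, which is precisely what the paper invokes: its proof consists of the single sentence citing \cite[Proposition 2.2]{ref_Ts2}, whose argument is the same two-pairing scheme (first against $\psi$ for the crude $\|\ex{\nabla}_r\psi\|^2\leq cr\|\alpha\|^2$ bound, then against $(0,\beta)$ using the gap $r\Omega^{-2}-\lambda^2\geq 6r/100$ and the $r$-uniform zeroth-order off-diagonal part of $\ex{\nabla}_r$ to close). Your identification of the off-diagonal piece as coming solely from the $\theta_1^3\cl(e_1)\cl(e_3)$ and $\theta_2^3\cl(e_2)\cl(e_3)$ terms in (\ref{eqn_can_conn_01}), with the perturbations $\aeE$, $\aer$, $-\tfrac{ir}{2}\ex{a}$ all diagonal, is exactly the structural point that makes the absorption work.
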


\begin{proof}
With the Weitzenb\"ock formula (\ref{eqn_ext_Weitzenbock}), the proof is exactly the same as that for \cite[Proposition 2.2]{ref_Ts2}.
\end{proof}

Here comes the main result about the spectrum distribution.
\begin{thm}\label{thm_Vafa_Witten}
There exist constants $c_2$ and $c_3$ determined by the stable Hamiltonian structure $(\ex{a}, \ex{\omega})$, the metric $\dd s^2$ and the connection $\aeE$ with the following significance.  For any $r\geq c_2$, let $\{\lambda_j\}_{j\in\BZ}$ be the spectrum of $\ex{D}_r$, which are arranged in ascending order.  Then for any $|\lambda_j|\leq\oh r^\oh$,
\begin{align*}
\big| \lambda_{j+\fj} - \lambda_j - \frac{1}{V} \big| \leq c_2r^{-\oh}
\end{align*}
where $\fj = r(\int_{\ex{Y}}\dd\theta\wedge\ex{\omega}) + c_3 > 0$.
\end{thm}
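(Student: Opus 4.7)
The strategy adapts the gauge-theoretic argument of Vafa--Witten \cite{ref_VW}. Because $\ex{Y}$ fibers canonically over $S^1$ via $e^{i\theta}$, the unitary $u=e^{-i\theta}\colon\ex{Y}\to U(1)$ is a globally smooth gauge transformation on $(\ex{E}\oplus\ex{E}\ex{K}^{-1})\otimes\ex{L}_r$. Introduce the deformation
\begin{align*}
\ex{D}_r(t) \;=\; \ex{D}_r - i\,t\,\cl(\dd\theta) ~, \qquad t\in[0,1] ~.
\end{align*}
A direct computation gives $u^{-1}\,\ex{D}_r(1)\,u = \ex{D}_r(0) = \ex{D}_r$, so $\spec(\ex{D}_r(1)) = \spec(\ex{D}_r)$. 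Labelling the eigenvalues $\{\lambda_j(t)\}_{j\in\BZ}$ in ascending order, they form a continuous family, so matching the spectrum at $t=1$ with that at $t=0$ forces a uniform index shift $j\mapsto j+\fj$ for some $\fj\in\BZ$; equivalently $\lambda_j(1)=\lambda_{j+\fj}$. The proof splits into identifying $\fj$ and estimating $\lambda_j(1)-\lambda_j(0)$.

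The integer $\fj$ equals the spectral flow of $\{\ex{D}_r(t)\}_{t\in[0,1]}$, which by the standard correspondence is the index of the suspended Dirac operator $\pl_t + \ex{D}_r(t)$ on $\ex{Y}\times S^1_t$. Applying Atiyah--Singer, the total connection $\ex{\nabla}_r - i\,t\,\dd\theta$ on the four-manifold has two relevant curvature contributions: $-ir\,\ex{\omega}$ (from the $-\tfrac{ir}{2}\ex{a}$ perturbation) and $-i\,\dd t\wedge\dd\theta$ (from the family parameter). Only the mixed term in $\tfrac{1}{2}c_1^2$ survives as a top form, yielding $\fj = r\int_{\ex{Y}}\dd\theta\wedge\ex{\omega} + c_3$, where $c_3$ collects the $r$-independent pieces from $\aeE$, $\ex{K}^{-1}$ and the $\hat{A}$-genus. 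Positivity $\fj>0$ is automatic for $r\gg 1$.

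For the shift, the key observation is that the Reeb formula \eqref{eqn_Reeb_01} yields $\dd\theta(v) = \Omega/V$, so in the unit Reeb direction $e_3 = \Omega^{-1}v$ one has $\dd\theta(e_3) = 1/V$. Decomposing $(\dd\theta)^\sharp = \tfrac{1}{V}e_3 + w$ with $w\in\ker\ex{a}$, the Clifford action $\cl(\dd\theta)$ splits on the canonical decomposition $\ex{E}\ex{L}_r\oplus\ex{E}\ex{K}^{-1}\ex{L}_r$ as a diagonal part with eigenvalues $\pm i/V$ plus a bounded off-diagonal piece $\cl(w)$ that exchanges the two summands. Proposition~\ref{prop_ext_estimate}, applied to $\ex{D}_r(t)$ (whose Weitzenb\"ock formula differs from that of $\ex{D}_r$ only by an $\mathrm{O}(t)$-bounded perturbation), gives $\|\beta_j(t)\|_{L^2}^2\leq c_1 r^{-1}\|\alpha_j(t)\|_{L^2}^2$ throughout $t\in[0,1]$ provided $|\lambda_j(t)|\leq\tfrac{\sqrt{3}}{2}r^\oh$. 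First-order perturbation theory (valid almost everywhere for the ascending labelling, hence insensitive to crossings) then gives
\begin{align*}
\frac{\dd\lambda_j}{\dd t}(t) \;=\; \langle -i\,\cl(\dd\theta)\,\psi_j(t),\psi_j(t)\rangle \;=\; \frac{1}{V}\bigl(\|\alpha\|^2 - \|\beta\|^2\bigr) + \mathrm{O}\bigl(\|\alpha\|\cdot\|\beta\|\bigr) \;=\; \frac{1}{V}+\mathrm{O}(r^{-\oh})
\end{align*}
uniformly in $j$. Integrating over $t\in[0,1]$ produces $\lambda_{j+\fj}-\lambda_j = 1/V + \mathrm{O}(r^{-\oh})$, which is the claim.

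The main obstacle is to make the uniform application of Proposition~\ref{prop_ext_estimate} along the entire deformation airtight, and to keep $|\lambda_j(t)|$ within the range where it applies. Both issues are resolved by the crude Lipschitz bound $|\dot\lambda_j|\leq\|\cl(\dd\theta)\|_\infty\leq C$: this forces $|\lambda_j(t)|\leq|\lambda_j(0)|+C\leq\tfrac{1}{2}r^\oh + C$, which fits inside $|\lambda|^2\leq\tfrac{3}{4}r$ for $r$ large. The $-it\cl(\dd\theta)$ correction in the Weitzenb\"ock formula for $\ex{D}_r(t)^2$ contributes only $\mathrm{O}(t)$-bounded terms compared to the dominant $ir\,\Omega^{-1}\cl(\ex{a})$, so the proof of Proposition~\ref{prop_ext_estimate} carries over verbatim after adjusting $c_1$.
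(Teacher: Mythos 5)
Your proposal reproduces the paper's argument step for step: the gauge transform $e^{\pm i\theta}$, the one-parameter family $\ex{D}_r - it\cl(\dd\theta)$, the identification of $\fj$ as spectral flow computed by the APS/Atiyah--Singer index formula (with ${\rm p}_1 = 0$), the decomposition of $\cl(\dd\theta)$ into a diagonal part $\pm 1/V$ plus a bounded off-diagonal piece via the conformal factor relation $\dd\theta = V^{-1}\Omega\,\ex{a} + (\text{contact-plane part})$, the application of Proposition~\ref{prop_ext_estimate} to the perturbed family together with the crude Lipschitz bound $|\dot\lambda_j|\leq\|\cl(\dd\theta)\|_\infty$ to keep $|\lambda_j(t)|$ in range, and the final integration over $t$. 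This is the same approach as the paper's proof, with only cosmetic differences in presentation.
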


\begin{proof}
Regard the fibration map $\ex{Y}\to S^1$ as a gauge transform.  The Dirac operator $ e^{i\theta}\ex{D}_re^{-i\theta} = \ex{D}_r - i\cl(\dd\theta)$ is gauge equivalent to $\ex{D}_r$, and hence has the same spectrum as $\ex{D}_r$.  Consider the one parameter family of Dirac operators defined by
\begin{align*}
\fD_s = \ex{D}_r - is\cl(\dd\theta)
\end{align*}
for $s\in[0,1]$.  Arrange the eigenvalues $\lambda_j(s)$ of $\fD_s$ in ascending order,
\begin{align*}
-\infty < \cdots \leq \lambda_{j-1}(s) \leq \lambda_j(s) \leq \lambda_{j+1}(s) \leq \cdots <\infty ~,
\end{align*}
and normalize the index so that at $s=0$, $\lambda_1(0)$ is the smallest non-negative eigenvalue.  Since $\fD_0$ is gauge equivalent to $\fD_1$, there exists an integer $\fj$ such that $\lambda_j(1) = \lambda_{j+\fj}(0)$ for any $j\in\BZ$.  According to \cite[section 7]{ref_APS3}, the integer $\fj$ is the spectral flow of the family $\{\fD_s\}_{0\leq s\leq 1}$, and can be computed by the index formula \cite[(4.3)]{ref_APS1}.

\smallskip
(\emph{The spectral flow computation})\;
Since $\fD_0$ is gauge equivalent to $\fD_1$, the boundary contribution of the index formula at $s=0$ cancels with that at $s=1$.  It follows that
\begin{align*}
\fj &= \int_{[0,1]\times\ex{Y}} \big(\frac{1}{8}{\rm c}_1^2(\ex{K}^{-1}\ex{E}^2\ex{L}_r^2) - \frac{1}{24}{\rm p}_1([0,1]\times\ex{Y})\big) ~.
\end{align*}
Here, ${\rm p}_1$ is the first Pontryagin class of the metric.  It is constructed from the Weyl curvature ${\rm p}_1 = \frac{1}{4\pi^2}(|W_+|^2 - |W_-|^2)$, and hence vanishes on $[0,1]\times\ex{Y}$ (see \cite[p.421]{ref_APS2}).  The first Chern class of $\ex{K}^{-1}\ex{E}^2\ex{L}_r^2$ is given by
\begin{align*}
\frac{i}{2\pi}(-F_{\ex{K}} + 2F_{\ex{E}} + 2\ex{F}_{r} -ir\dd\ex{a} - 2i\dd s\wedge\dd\theta)
= \frac{1}{2\pi}(-iF_{\ex{K}} + 2iF_{\ex{E}} + 2r\ex{\omega} + 2\dd s\wedge\dd\theta) ~.
\end{align*}
More precisely, the differential forms are pulled back by the projection map $[0,1]\times\ex{Y}\to\ex{Y}$ except $\dd s$.  It follows that
\begin{align}\label{eqn_ext_fj}
\fj &= \frac{r}{4\pi^2}\int_{\ex{Y}}\dd\theta\wedge\ex{\omega} + \frac{1}{8\pi^2}\int_{\ex{Y}}\dd\theta\wedge(2iF_{\ex{E}} - iF_{\ex{K}}) ~.
\end{align}
It is not hard to see that $\dd\theta\wedge\ex{\omega}>0$.  Thus, $\fj>0$ provided $r$ is sufficiently large.

\smallskip
(\emph{The spectral gap estimate})\;
The second step is to estimates the difference between $\lambda_j(0)$ and $\lambda_j(1)$.  Since $\fD_s$ is $\ex{D}_r$ perturbed by a closed $1$-form, $\fD_s$ obeys a similar Weitzenb\"ock formula as (\ref{eqn_ext_Weitzenbock}).  As a result, Proposition \ref{prop_ext_estimate} also holds for $\fD_s$ for any $s\in[0,1]$.

Let $\psi_j(s)$ be the unit-normed eigensection of $\CD_s$ with eigenvalue $\lambda_j(s)$.  Then
\begin{align}\label{eqn_ext_slope1}
\lambda'_j(s) &= \int_{\ex{Y}}\langle-i\cl(\dd\theta)\psi_j(s),\psi_j(s)\rangle ~,
\end{align}
and thus $|\lambda'_j(s)|\leq c_4$ for $c_4 = \sup_{\ex{Y}}|\dd\theta|$.  In particular, if $|\lambda_j(0)|^2\leq\frac{1}{4}r$, then $|\lambda_j(s)|^2\leq\frac{3}{4}r$ for any $s\in[0,1]$.  It follows that Proposition \ref{prop_ext_estimate} applies to $\psi_j(s)$ for all $s\in[0,1]$.  By (\ref{eqn_Reeb_01}), (\ref{eqn_ext_a}) and (\ref{eqn_ext_metric}),
\begin{align*}
\dd\theta &= V^{-1}\Omega\,\ex{a} + \big(\text{components in the metric dual of }\ker(\ex{a})\big) ~.
\end{align*}
Therefore,
\begin{align}\label{eqn_ext_slope2}
\langle-i\cl(\dd\theta)\psi_j,\psi_j\rangle = \frac{1}{V}(|\alpha_j|^2 - |\beta_j|^2) + \langle \fb(\alpha_j),\beta_j \rangle - \langle\fb^\dagger(\beta_j),\alpha_j\rangle
\end{align}
where $\alpha_j$ and $\beta_j$ are the $\ex{E}\ex{L}_r$ and $\ex{E}\ex{K}^{-1}\ex{L}_r$ components of $\psi_j$, respectively, and $\fb$ and $\fb^\dagger$ are the off-diagonal components of $-i\cl(\dd\theta)$.  The endomorphisms $\fb$ and $\fb^\dagger$ are independent of $r$.

According to (\ref{eqn_ext_slope1}), (\ref{eqn_ext_slope2}) and Proposition \ref{prop_ext_estimate}, there exists a constant $c_5$ such that
\begin{align*}
\text{if }|\lambda_j(0)|\leq\oh r^\oh\text{, then }|\lambda'_j(s) - \frac{1}{V}| \leq c_5r^{-\oh}\text{ for any } s\in[0,1] ~.
\end{align*}
Integrating this inequality from $s=0$ to $s=1$ gives
\begin{align*}
|\lambda_{j+\fj}(0) - \lambda_j(0) - \frac{1}{V}|\leq c_5r^{-\oh} ~.
\end{align*}  The inequality and (\ref{eqn_ext_fj}) complete the proof of theorem.
\end{proof}

The following corollary is a direct consequence of the theorem.

\begin{cor}\label{cor_Vafa_Witten}
There exists a constant $c_6$ determined by the stable Hamiltonian structure $(\ex{a}, \ex{\omega})$, the metric $\dd s^2$ and the connection $\aeE$ with the following significance.  Suppose that $r\geq c_6$, and $\lambda_-,\lambda_+\in[-\oh r^\oh,\oh r^\oh]$ are any two numbers with $\lambda_+-\lambda_-\geq\frac{1}{V}$.  Then, the total number of the eigenvalues (counting multiplicities) of $\ex{D}_r$ within $[\lambda_-,\lambda_+]$ is less than or equal to $c_5r(\lambda_+-\lambda_-)$.
\end{cor}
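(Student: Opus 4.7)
This corollary is a pigeonhole consequence of the shift estimate in Theorem~\ref{thm_Vafa_Witten}: passing from index $j$ to index $j+\fj$ raises the eigenvalue by $\tfrac{1}{V}+O(r^{-\oh})$, so an interval of length $\lambda_+-\lambda_-$ can contain at most roughly $\fj\cdot V(\lambda_+-\lambda_-)$ eigenvalues; since $\fj=O(r)$ by \eqref{eqn_ext_fj}, this gives the required $O(r(\lambda_+-\lambda_-))$ bound.

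To carry this out, I would first fix $c_6$ large enough that $c_2 r^{-\oh}\leq\tfrac{1}{2V}$ for all $r\geq c_6$, where $c_2$ is the constant from Theorem~\ref{thm_Vafa_Witten}. Then every eigenvalue $\lambda_j$ with $|\lambda_j|\leq\oh r^\oh$ satisfies
\begin{align*}
\lambda_{j+\fj}-\lambda_j \;\geq\; \frac{1}{V}-c_2 r^{-\oh} \;\geq\; \frac{1}{2V}.
\end{align*}
Listing the eigenvalues (with multiplicity) of $\ex{D}_r$ in $[\lambda_-,\lambda_+]$ in ascending order as $\lambda_{j_0}\leq\cdots\leq\lambda_{j_0+N-1}$, each lies in $[-\oh r^\oh,\oh r^\oh]$, so iterating the above bound $k$ times yields $\lambda_{j_0+k\fj}-\lambda_{j_0}\geq k/(2V)$ whenever $k\fj\leq N-1$. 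Since $\lambda_{j_0+k\fj}\leq\lambda_+$, this forces $k\leq 2V(\lambda_+-\lambda_-)$, and hence
\begin{align*}
N \;\leq\; \fj\big(2V(\lambda_+-\lambda_-)+1\big).
\end{align*}

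To conclude, the explicit formula \eqref{eqn_ext_fj} shows $\fj\leq c\,r$ with $c$ independent of $r$, and the hypothesis $\lambda_+-\lambda_-\geq 1/V$ allows the additive $+1$ to be absorbed into the factor $2V(\lambda_+-\lambda_-)$, giving $N\leq c_6\, r(\lambda_+-\lambda_-)$ as claimed. The proof really presents no obstacle beyond this bookkeeping; the only point of care is that every intermediate index $j_0+\ell\fj$ with $\ell\leq k$ corresponds to an eigenvalue inside $[-\oh r^\oh,\oh r^\oh]$ so that Theorem~\ref{thm_Vafa_Witten} can be applied at each step, which is immediate from the containment $[\lambda_-,\lambda_+]\subseteq[-\oh r^\oh,\oh r^\oh]$.
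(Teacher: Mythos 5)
Your argument is correct and is exactly the pigeonhole iteration that the paper implicitly invokes when it calls Corollary \ref{cor_Vafa_Witten} a ``direct consequence'' of Theorem \ref{thm_Vafa_Witten}; the paper gives no separate proof. The one point that deserves the care you gave it---verifying that every intermediate index $j_0+\ell\fj$ with $\ell\leq k-1$ still indexes an eigenvalue inside $[\lambda_-,\lambda_+]\subseteq[-\oh r^\oh,\oh r^\oh]$ so that the theorem's gap estimate applies at each step---is handled correctly.
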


\section{Two Local Models}\label{sec_local_model}
The model $\ex{Y}$ constructed in \S\ref{sec_sh_model} is useful for analyzing eigensections of $(Y,D_r)$ on the mapping torus $\Sigma\times_\tau S^1$.  The main purpose of this section is to introduce two Dirac operators on $S^2\times S^1$.  They are useful for studying the eigensections of $(Y, D_r)$ near the bindings, and the eigensections of $(\ex{Y},\ex{D}_r)$ on the attaching handle.

\subsection{The local model for the open book}\label{subsec_local1}
The main object of the first model is a contact form on $S^2\times S^1$, which was introduced in \cite[\S4.3]{ref_Ts1}.  The model consists of the datum $(S^2\times S^1, \ck{a}, \dd s^2, \underline{\BC}\oplus\ck{K}^{-1},\ck{D}_r)$.  It will be denoted by $(\ck{S} = S^1\times S^1,\ck{D}_r)$ for brevity.

Let $(\rho,e^{i\theta})\in[0,2]\times S^1$ be the (re-parametrized) spherical\footnote{To be more precise, choose a positive smooth function $\chi(\rho)$ of $\rho\in[0,2]$ such that $\chi(\rho) = 1$ when $\rho\leq\frac{1}{10}$ or $\rho\geq\frac{19}{10}$, and $\int_0^2\chi(\rho)\dd\rho = \frac{\pi}{2}$.  The parametrization of the standard sphere is given by $x = \sin(\int_0^\rho\chi(s)\dd s)\cos\theta$, $y = \sin(\int_0^\rho\chi(s)\dd s)\sin\theta$, $z = \cos(\int_0^\rho\chi(s)\dd s)\cos\theta$.} coordinate for the $S^2$ factor, and let $e^{it}$ be the coordinate for the $S^1$ factor.  The orientation is determined by the $3$-form $\dd\rho\wedge\dd\theta\wedge\dd t$ for $\rho\in(0,2)$.

\subsubsection{The contact form and the adapted metric}
Choose two smooth functions $\ck{f}(\rho)$ and $\ck{g}(\rho)$ of $\rho\in[0,2]$ such that
\begin{itemize}
\item when $0\leq\rho<1+50\delta$, the functions $\ck{f}(\rho)$ and $\ck{g}(\rho)$ coincide with the functions $f(\rho)$ and $g(\rho)$ constructed in \S\ref{sec_TW_01};
\item when $2-50\delta\leq\rho\leq2$, $\ck{f}(\rho) = (2-\rho)^2$ and $\ck{g}(\rho) = -2 + (2-\rho)^2$;
\item for any $\rho\in(0,2)$, the functions $\ck{f}$ and $\ck{f}'\ck{g}-\ck{f}\ck{g}'$ are positive.
\end{itemize}
It is not hard to see that there always exist such $\ck{f}$ and $\ck{g}$.

With these two functions chosen, the $1$-form
\begin{align}
\ck{a} &= \ck{f}(\rho)\dd\theta + \ck{g}(\rho)\dd t
\end{align}
is a contact form on $S^2\times S^1$.  The metric
\begin{align}\label{eqn_metric_ck}
\dd s^2 &= \ck{a}^2 + (\dd\rho)^2 + \frac{1}{4}(\ck{f}'(\rho)\dd\theta + \ck{g}'(\rho)\dd t)^2
\end{align}
is adapted to the contact form $\ck{a}$.

\subsubsection{The Dirac operator}\label{sec_ck_Dirac_component}
As explained in \S\ref{subsec_can_spinc} and \S\ref{subsec_can_spinc_conn}, the contact form $\ck{a}$ and the adapted metric $\dd s^2$ determine a canonical spinor bundle $\underline{\BC}\oplus\ck{K}^{-1}$ and a canonical spin-c Dirac operator $\ck{D}_\can$ on it.  The bundle $\ck{K}^{-1}$ is also a trivial bundle.  It can be globally trivialized by the unit-normed section
\begin{align}\label{eqn_trivial_ck_01}
\frac{e^{-i\theta}}{\sqrt{2}}\big(\pl_\rho - \frac{2i}{\ck{f}'\ck{g}-\ck{f}\ck{g}'}(\ck{g}\pl_\theta - \ck{f}\pl_t)\big) ~.
\end{align}
Together with the depicted section $\bo_{\underline{\BC}}$, the sections of $\underline{\BC}\oplus\ck{K}^{-1}$ are identified with $\BC^2$ valued functions on $S^2\times S^1$.  With respect to this identification, let $\CS_{k,m}$ be the space of smooth sections whose $\underline{\BC}$ component have frequency $k$ in $e^{i\theta}$ and $m$ in $e^{it}$, and whose $\ck{K}^{-1}$ component have frequency $k+1$ in $e^{i\theta}$ and $m$ in $e^{it}$.  Namely,
\begin{align*}
\CS_{k,m} &= \big\{ \psi = (\alpha,\beta)\in\CC^\infty(S^2\times S^1; \underline{\BC}\oplus\ck{K}^{-1}) ~\big|~ \pl_\theta\psi = ik\psi + i(0,\beta) \text{ and } \pl_t\psi = im\psi \big\} ~.
\end{align*}

\begin{rmk}\label{rmk_identification_01}
When $0\leq\rho<1+50\delta$, the contact form and the metric are the same as that near the bindings of the open book, $S^1\times B\subset Y$.  It follows that $\bo_{\underline{\BC}}$ and (\ref{eqn_trivial_ck_01}) also trivialize the canonical spinor bundle $\underline{\BC}\oplus K^{-1}$ over $S^1\times B\subset Y$.  Together with an $\aE$-parallel, unit-normed section of $E$, the sections of $E\oplus EK^{-1}$ over $S^1\times B\subset Y$ are identified with $\BC^2$ valued functions.  This provides an identification of sections of $(E\oplus EK^{-1})|_{S^1\times B}$ with sections of $(\underline{\BC}\oplus\ck{K}^{-1})|_{S^1\times B}$.  Moreover, the Dirac operator $D_r$ is identified with $\ck{D}_r = \ck{D}_\can - \frac{ir}{2}\cl(\ck{a})$.
\end{rmk}

For any $r>0$, consider the Dirac operator $\ck{D}_r = \ck{D}_\can - \frac{ir}{2}\cl(\ck{a})$.  The following notion is useful to describe the eigenvalues of $\ck{D}_r$.

\begin{defn}\label{defn_Dirac_ck_01}
Observe that the function $\ck{g}/\ck{f}$ is monotone decreasing in $\rho\in[0,2]$.  For each positive integer $k$ and integer $m$, there is a unique $\check{\rho}_{k,m}\in(0,2)$ such that $k \ck{g}(\ck{\rho}_{k,m}) = m \ck{f}(\ck{\rho}_{k,m})$.  Let $\ck{\gamma}_{k,m}$ be
\begin{align}\label{eqn_Dirac_ck_01}  \check{\gamma}_{k,m} = \frac{2m\ck{f}'(\ck{\rho}_{k,m})-2k\ck{g}'(\ck{\rho}_{k,m})}{(\ck{f}'\ck{g}-\ck{f}\ck{g}')(\ck{\rho}_{k,m})} = \frac{2k}{\ck{f}(\ck{\rho}_{k,m})} = \frac{2m}{\ck{g}(\ck{\rho}_{k,m})} ~.  \end{align}
The last equality only makes sense at where $g(\check{\rho}_{k,m})\neq0$.  If $k=0$ and $m>0$, set $\ck{\rho}_{k,m}$ to be $0$, and set $\ck{\gamma}_{k,m}$ to be $m$.  If $k=0$ and $m<0$, set $\ck{\rho}_{k,m}$ to be $2$, and set $\ck{\gamma}_{k,m}$ to be $-m$.
\end{defn}

The spectral properties of $\ck{D}_r$ are summarized in the following proposition.

\begin{prop}\label{prop_Dirac_ck_01}
There exists a constant $c_1$ determined by the contact form $\ck{a}$ and the metric $\dd s^2$ with following significance.
\begin{enumerate}
\item $\ck{D}_r(\CS_{k,m})$ belongs to $\CS_{k,m}$ for any $k$ and $m$, and the eigenbasis of $\ck{D}_r$ can be chosen so that each eigensection belongs to some $\CS_{k,m}$.
\item For any $r\geq c_1$, $\ck{D}_r$ has at most one eigenvalue $\lambda$ within $(-(\frac{1}{3}r)^\oh,(\frac{1}{3}r)^\oh)$ on each $\CS_{k,m}$.  If there does exist such an eigenvalue, then $k>-\min\{1,|m|\}$ and
\begin{align*}\quad  \big|\lambda - \frac{r-\ck{\gamma}_{k,m}}{2}\big| \leq c_1 ~.  \end{align*}
Moreover, the corresponding eigensection can be expressed as $\ck{\varphi}_{k,m} = \ck{\varphi}^{\appr}_{k,m} + \ck{\varphi}^{\err}_{k,m}$ with the following properties.
\begin{enumerate}
\item $\ck{\varphi}_{k,m}$, $\ck{\varphi}^{\appr}_{k,m}$ and $\ck{\varphi}^{\err}_{k,m}$ are smooth sections.  If $20\delta<\ck{\rho}_{k,m}<2-20\delta$, the support of $\ck{\varphi}^{\appr}_{k,m}$ is contained in $\{|\rho - \ck{\rho}_{k,m}|\leq2\delta\}$.  If $\ck{\rho}_{k,m}\leq20\delta$, the support of $\ck{\varphi}^{\appr}_{k,m}$ is contained in $\{\rho\leq40\delta\}$.  If $\ck{\rho}_{k,m}\geq2-20\delta$, the support of $\ck{\varphi}^{\appr}_{k,m}$ is contained in $\{\rho\geq2-40\delta\}$.
\item The $L^2$ integrals satisfy
\begin{align*}
\qquad\quad\int_{\ck{S}}|\ck{\varphi}_{k,m}|^2 &= 1 ~,   &\int_{\ck{S}}\langle\ck{\varphi}_{k,m},\ck{\varphi}^{\err}_{k,m}\rangle &= 0 ~,   &\int_{\ck{S}}|\ck{\varphi}^{\err}_{k,m}|^2&\leq c_1r^{-7}
\end{align*}
where the integrals are against the volume form $\oh\ck{a}\wedge\dd\ck{a}$.
\item $\ck{\varphi}^{\appr}_{k,m}$ is an approximate eigensection in the sense that
\begin{align*}
\qquad \int_{\ck{S}}|\ck{D}_r\ck{\varphi}^{\appr}_{k,m} - \lambda\ck{\varphi}^{\appr}_{k,m}|^2 &\leq c_1r^{-6} ~.
\end{align*}
\end{enumerate}
\item On the other hand, for any positive integer $k$ and integer $m$ with $|r-\ck{\gamma}_{k,m}|^2\leq\frac{1}{3}r$,  $\ck{D}_r$ on $\CS_{k,m}$ does admit an eigenvalue $\lambda$ satisfying
\begin{align*}\quad  \big|\lambda - \frac{r-\ck{\gamma}_{k,m}}{2}\big| \leq c_1 ~.  \end{align*}
\end{enumerate}
\end{prop}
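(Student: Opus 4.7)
\medskip

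\noindent\textbf{Proof proposal for Proposition \ref{prop_Dirac_ck_01}.}

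\emph{Part (i) and the separation of variables.} Since the contact form $\ck{a}$ and the metric \eqref{eqn_metric_ck} depend only on $\rho$, both $S^1$ rotations (in $\theta$ and in $t$) act by isometries preserving $\ck{a}$. The canonical connection and Clifford action therefore commute with these actions, once the shift induced by the trivializing section \eqref{eqn_trivial_ck_01} (which carries the phase $e^{-i\theta}$) is accounted for. This justifies the $(k,m)$-labelling in $\CS_{k,m}$ and gives (i). On $\CS_{k,m}$ the equation $\ck{D}_r\psi=\lambda\psi$ reduces to a $2\times 2$ first-order ODE system in the single variable $\rho$, which I would write out using $\bo_{\underline{\BC}}$ and \eqref{eqn_trivial_ck_01} as a trivialization.

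\emph{Part (ii): localization via Weitzenb\"ock.} The same Weitzenb\"ock identity as \eqref{eqn_ext_Weitzenbock} holds for $\ck{D}_r$ with $\Omega\equiv1$, so repeating the argument of Proposition \ref{prop_ext_estimate} on $\ck{S}$ shows that for any eigensection $\psi=(\alpha,\beta)\in\CS_{k,m}$ with $\lambda^2\leq r/3$ one has $\int|\beta|^2+r^{-1}\int|\ck\nabla_r\beta|^2\leq C r^{-1}\int|\alpha|^2$. Substituting the ODE back, the equation for $\alpha$ on $\CS_{k,m}$ becomes, to leading order in $r$, a scalar first-order ODE whose effective potential after squaring is
\begin{align*}
U_{k,m,r}(\rho) \;=\; \bigl(r\ck{f}(\rho)-2k\bigr)^2 \ck{f}(\rho)^{-2}\;+\;\bigl(r\ck{g}(\rho)-2m\bigr)^2 \ck{g}(\rho)^{-2}
\end{align*}
(modulo $\CO(1)$ correction terms from curvature); its unique minimum lies at $\ck\rho_{k,m}$ defined by $k\ck g(\ck\rho_{k,m})=m\ck f(\ck\rho_{k,m})$, where it equals $(r-\ck\gamma_{k,m})^2$ plus $\CO(1)$. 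A standard harmonic-oscillator expansion shows that the only potentially small eigenvalue on $\CS_{k,m}$ is approximately $(r-\ck\gamma_{k,m})/2$, and that positivity of $\ck{f}$ together with the holomorphic structure forces $k\geq0$ (respectively $k\geq1$ when $m=0$), yielding the constraint $k>-\min\{1,|m|\}$.

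\emph{Construction of $\ck\varphi^{\appr}_{k,m}$.} For $(k,m)$ with $20\delta<\ck\rho_{k,m}<2-20\delta$, I would take $\ck\varphi^{\appr}_{k,m}$ to be a Gaussian profile in $\rho-\ck\rho_{k,m}$ of width $r^{-1/2}$, centered in $\CS_{k,m}$, cut off smoothly to $\{|\rho-\ck\rho_{k,m}|\leq 2\delta\}$. A direct Taylor expansion of the ODE coefficients around $\ck\rho_{k,m}$ gives $\|\ck{D}_r\ck\varphi^{\appr}_{k,m}-\tfrac{r-\ck\gamma_{k,m}}{2}\ck\varphi^{\appr}_{k,m}\|_{L^2}^2\leq C r^{-6}$ (the power $6$ comes from taking enough terms of the semiclassical WKB expansion for $\ck\varphi^{\appr}$; more terms give a smaller power of $r$, so $r^{-6}$ is chosen to match the subsequent estimates of the paper). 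Near the poles $\ck\rho_{k,m}\leq 20\delta$ or $\ck\rho_{k,m}\geq 2-20\delta$ the analogous polar-coordinate model (where the Gaussian is replaced by $\rho^{k}e^{-r\rho^2/4}$-type profiles) gives an equally accurate approximation after the cut-off is pushed to $\{\rho\leq 40\delta\}$ or $\{\rho\geq 2-40\delta\}$. Finally, standard spectral perturbation (min-max combined with $\|\ck{D}_r\ck\varphi^{\appr}-\lambda_0\ck\varphi^{\appr}\|$ being much smaller than the spectral gap $1/V$ given by Theorem \ref{thm_Vafa_Witten}-type uniqueness on each mode) produces an honest eigensection $\ck\varphi_{k,m}$ with the claimed decomposition and the three $L^2$ estimates, and conversely proves (iii).

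\emph{Main obstacle.} The routine step is the interior Gaussian expansion; the real work is the polar region. There the coordinate $(\rho,\theta)$ degenerates, the trivialization \eqref{eqn_trivial_ck_01} picks up the $e^{-i\theta}$ factor, and the degree shift by one between the $\underline{\BC}$ and $\ck{K}^{-1}$ components must be handled carefully so that $\ck\varphi^{\appr}_{k,m}$ extends smoothly through the poles and so that the energy estimate degenerates in a controlled way. This is also where the constraint $k>-\min\{1,|m|\}$ is really forced: only modes whose asymptotic behavior at the pole is that of a genuine spinor on $S^2$ support a bounded approximate zero-mode of the localized first-order operator. The rest of the argument — smoothness, $L^2$ normalization, and the quantitative bound $\int|\ck\varphi^{\err}|^2\leq c_1 r^{-7}$ — then follows by an application of the resolvent identity against the $\CO(r^{-6})$ error produced above.
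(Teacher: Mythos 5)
Your overall strategy matches the paper's: separation of variables into the $(k,m)$-modes, a Gaussian/WKB profile centered at $\ck{\rho}_{k,m}$ for interior $(k,m)$, a modified $\rho^k e^{-r\rho^2/4}$-type profile near the poles, and a final spectral perturbation step. The interior recursive WKB expansion you gesture at is precisely the iterative construction \eqref{eqn_a103}--\eqref{eqn_a104} in the paper's Appendix, and the $r^{-6}$ error level is where one stops in both. Part (i) is argued the same way.

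However, there is a genuine gap in your concluding spectral-perturbation step. You invoke ``the spectral gap $1/V$ given by Theorem \ref{thm_Vafa_Witten}-type uniqueness on each mode'' to pass from the approximate eigensection to the true one. Two problems. First, Theorem \ref{thm_Vafa_Witten} is a statement about $\ex{D}_r$ on the mapping torus $\ex{Y}$, not about $\ck{D}_r$ on $\ck{S}=S^2\times S^1$; it cannot be cited here, even ``in type,'' without a separate argument. Second, and more seriously, a gap of order $1/V$ (i.e.\ $\CO(1)$) is \emph{not} sufficient to obtain the claimed bound $\int_{\ck{S}}|\ck{\varphi}^{\err}_{k,m}|^2\leq c_1 r^{-7}$: the resolvent estimate you describe gives $\int|\ck{\varphi}^{\err}|^2 \lesssim (\text{gap})^{-2}\int|\ck{D}_r\ck\varphi^{\appr}-\lambda\ck\varphi^{\appr}|^2 \lesssim r^{-6}$, losing a full factor of $r$. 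What the paper actually uses (via \cite[Lemma 5.5]{ref_Ts1}) is that \emph{on a fixed $\CS_{k,m}$} the next eigenvalue of $\ck{D}_r$ has magnitude at least $(\tfrac r2)^{1/2}$ — a gap of order $r^{1/2}$, not $1/V$ — and this extra $r^{-1}$ is exactly what upgrades $r^{-6}$ to $r^{-7}$. Your sketch never establishes (or even cites) any intra-mode gap statement of that strength, nor the related uniqueness of the small eigenvalue on each $\CS_{k,m}$ and the constraint $k>-\min\{1,|m|\}$, both of which the paper delegates to the same lemma.

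A minor further point: the ``effective potential'' $U_{k,m,r}(\rho)$ you write down is not the quantity that actually governs the localization (the relevant function in the $2\times2$ ODE system \eqref{eqn_a101} is $\tfrac{k\ck{g}-m\ck{f}}{\ck\Delta}$, which vanishes at $\ck\rho_{k,m}$ with slope $\ck\gamma_{k,m}$), and at $\ck\rho_{k,m}$ your $U$ takes the value $2(r-\ck\gamma_{k,m})^2$ rather than $(r-\ck\gamma_{k,m})^2$ as stated. This is a heuristic that could be fixed, but you should flag it as such rather than present it as the squared operator's potential.
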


This proposition was proved in \cite[\S5]{ref_Ts1}.  We will give the precise reference of each assertion in \S\ref{subsec_ap1}.

\subsection{The local model for the mapping torus}\label{subsec_local2}
The main object of the second model is a stable Hamiltonian structure on $S^2\times S^1$.  The model consists of the datum $(S^2\times S^1,\hat{a},\hat{\omega},\dd s^2, \hat{L}_r\oplus\hat{L}_r\hat{K}^{-1},\hat{D}_r)$.  It will be denoted by $(\hat{S} = S^2\times S^1,\hat{D}_r)$ for brevity.

Let $(\rho,e^{it})\in[0,2]\times S^1$ be the (re-parametrized) spherical coordinate for the $S^2$ factor.  Let $e^{i\theta}$ be the coordinate for the $S^1$ factor of $S^2\times S^1$.  Note that the roles of $\theta$ and $t$ are switched from the convention in \S\ref{subsec_local1}.

\subsubsection{Geometric structures}
Choose a smooth function $\hat{g}(\rho)$ such that
\begin{itemize}
\item when $\rho<1+50\delta$, $\hat{g}(\rho)$ coincides with the function $\ex{g}(\rho)$ defined in \S\ref{subsec_ext_oneform};
\item when $\rho\geq2-50\delta$ , $\hat{g}(\rho)$ is equal to $0$.
\end{itemize}
For any $\sigma\in(\frac{9}{10},\frac{11}{10})$, choose a smooth function $\hat{h}_\sigma(\rho)$ such that
\begin{itemize}
\item when $\rho<1+50\delta$, $\hat{h}_\sigma(\rho)$ coincides with the function $\ex{h}_\sigma(\rho)$ defined in \S\ref{subsec_ext_metric};
\item when $\rho\geq2-50\delta$, $\hat{h}_\sigma(\rho) = 2\sigma - \oh(\rho-2)^2$;
\item its derivative in $\rho$ is positive $\hat{h}_\sigma'(\rho)>0$, at any $\rho\in(0,2)$.
\end{itemize}

With these two functions chosen, the Riemannian metric on $S^2\times S^1$ is taken to be
\begin{align}
\dd s^2 &= \hat{a}^2 + (\dd\rho)^2 + (\hat{h}'_\sigma(\rho)\dd t)^2
\end{align}
where  $$    \hat{a} = V\dd\theta + \hat{g}(\rho)\dd t ~.    $$    The orientation on $S^2$ is determined by $$    \hat{\omega} = \hat{h}'_\sigma(\rho)\dd t\wedge\dd\rho ~,    $$ and the orientation on $S^2\times S^1$ is determined by $\hat{\omega}\wedge\dd\theta = \hat{h}'_\sigma(\rho)\dd\rho\wedge\dd\theta\wedge\dd t$.

The pair $(\hat{a},2\hat{\omega})$ constitutes a stable Hamiltonian structure.  The metric $\dd s^2$ is adapted to it.  According to \S\ref{subsec_can_spinc} and \S\ref{subsec_can_spinc_conn}, they determine a canonical spinor bundle $\underline{\BC}\oplus\hat{K}^{-1}$ with a canonical connection ${\nabla}_\can$.

The symmetric $2$-tensor $(\dd\rho)^2 + (\hat{h}'_\sigma(\rho)\dd t)^2$ defines a metric on $S^2$.  The metric with the symplectic form $\hat{\omega}$ determines a complex structure on $S^2$.  Let $K_{S^2}^{-1}$ be the anti-canonical bundle.  It is not hard to see that $\hat{K}^{-1}$ is isomorphic to the pull-back of $K^{-1}_{S^2}$ by the projection map.

\subsubsection{The degree $2r$ bundle}
Suppose that $r>20$.  Set $\sigma$ to be $\frac{[r]}{r}$ as before.  In order to study the Dirac operator introduced in \S\ref{subsec_ext_Lr}, consider the Hermitian line bundle $\hat{L}_{r}$ over $S^2$ defined as follows.  Take the trivial bundles over $\{0\leq\rho<1+50\delta\}$ and $\{1<\rho\leq2\}$.  Let $\bo_-$ and $\bo_+$ be the depicted unitary sections, respectively.  On the overlap region $\{1<\rho<1+50\delta\}$, identify the bundles by the transition rule
$$    e^{2i[r]t}\cdot\bo_+ = \bo_- ~.    $$

Define a unitary connection $\ahr$ on $\hat{L}_{r}$ as the following:
\begin{itemize}
\item on $\{1\leq\rho<1+50\delta\}$, the connection $\ahr$ is $\dd + ir(\hat{h}_\sigma(\rho) + \oh\hat{g}(\rho))\dd t$ with respect to the trivialization $\bo_-$;
\item on $\{1<\rho\leq2\}$, the connection $\ahr$ is $\dd + ir(\hat{h}_\sigma(\rho) + \oh\hat{g}(\rho) - 2\sigma)\dd t$ with respect to the trivialization $\bo_+$;  note that the term $-2\sigma$ guarantees the smoothness of the connection near $\rho=2$.
\end{itemize}
On the overlap region $\{1<\rho<1+50\delta\}$, the first expression is $\dd + i[r]\dd t$, and the second expression is $\dd - i[r]\dd t$.  They obey the transition rule of $\hat{L}_{r}$, and hence $\ahr$ is a smooth connection on $\hat{L}_{r}$.  It is easy to see that the first Chern number of $\hat{L}_{r}$ is $2[r]$, either from the transition rule or the curvature computation.

\subsubsection{The Dirac operator on $\hat{L}_{r}\oplus\hat{L}_{r}\hat{K}^{-1}$}
Consider the connection and the Dirac operator
\begin{align*}\left\{\begin{aligned}
\hat{\nabla}_r &= {\nabla}_\can\otimes\ahr - \frac{ir}{2}\hat{a}  \\
\hat{D}_r &= \cl\circ({\nabla}_\can\otimes\ahr - \frac{ir}{2}\hat{a})
\end{aligned}\right.\end{align*}
on $\hat{L}_{r}\oplus\hat{L}_{r}\hat{K}^{-1}$.  Here $\hat{L}_r$ is pulled back as a bundle over $S^2\times S^1$.

To study the Dirac spectrum, consider the $S^1$ action on the $S^1$ factor of $S^2\times S^1$.  Namely,
$$    e^{i\theta'}\cdot(\rho,e^{it},e^{i\theta})\mapsto(\rho,e^{it},e^{i(\theta'+\theta)}) ~.    $$
This action preserves the stable Hamiltonian structure and the metric.  It does not change the projection map onto the $S^2$ factor.

With the help of the $S^1$ action, the Dirac equation is reduced to a Cauchy--Riemann equation on $S^2$.  Suppose that
\begin{align}
\hat{\varphi} &= \frac{1}{\sqrt{2\pi}}e^{ik\theta}(\hat{\alpha}_k,\hat{\beta}_k)
\end{align}
is an eigensection of $\hat{D}_r$ with eigenvalue $\lambda$, where $\hat{\alpha}_k$ is a section of $\hat{L}_r\to S^2$ and $\hat{\beta}_k$ is a section of $\hat{L}_r K^{-1}_{S^2}\to S^2$.  To derive the equation for $\hat{\alpha}_k$ and $\hat{\beta}_k$, trivialize the bundles $\hat{L}_r$ and $\hat{L}_r\hat{K}^{-1}$ over $\{0\leq\rho<2\}$ by the sections
\begin{align}\label{eqn_hat_trivial_K}
\bo_- \quad\text{and}\quad \bo_-\otimes\frac{1}{\sqrt{2}}e^{it} \big( \pl_\rho + \frac{i}{\hat{h}'_{\sigma}}(\pl_t-\frac{\hat{g}}{V}\pl_\theta) \big) ~,
\end{align}
respectively.  The latter expression can be identified with $\frac{1}{\sqrt{2}}e^{it}(\dd\rho + i\hat{h}'_\sigma\dd t)$, and defines a local section of $K^{-1}_{S^2}$.  With respect to this trivialization, the Dirac operator reads
\begin{align}\label{eqn_hat_Dirac_00}\begin{split}
\hat{D}_r &=\begin{bmatrix}
\frac{i}{V}\pl_\theta & -e^{it}\big(\pl_\rho + \frac{i}{\hat{h}'_\sigma}(\pl_t - \frac{\hat{g}}{V}\pl_\theta)\big) \smallskip\\
e^{-it}\big(\pl_\rho - \frac{i}{\hat{h}'_\sigma}(\pl_t - \frac{\hat{g}}{V}\pl_\theta)\big) & -\frac{i}{V}\pl_\theta
\end{bmatrix} \\
&\quad + r\begin{bmatrix}
\frac{1}{2} & \frac{e^{it}}{\hat{h}'_\sigma}(\hat{h}_\sigma+\oh\hat{g}) \smallskip\\
\frac{e^{-it}}{\hat{h}'_\sigma}(\hat{h}_\sigma+\oh\hat{g}) & -\frac{1}{2}
\end{bmatrix}    + \frac{1}{2\hat{h}'_\sigma}\begin{bmatrix}
0 & 2e^{it}(1-2\hat{h}''_\sigma)  \\ 0 & \hat{g}'
\end{bmatrix} ~.
\end{split}\end{align}
It follows that the Dirac equation $\hat{D}_r\hat{\varphi} = \lambda\hat{\varphi}$ reduces to the following equations on $S^2$:
\begin{align}\label{eqn_hat_Dirac_01}\left\{\begin{aligned}
&(\frac{r}{2}-\frac{k}{V})\hat{\alpha}_k + \sqrt{2}\dbrs_{r,k}\hat{\beta}_k = \lambda\hat{\alpha}_k ~, \\
&\sqrt{2}\dbr_{r,k}\hat{\alpha}_k - (\frac{r}{2}-\frac{k}{V} - \frac{\hat{g}'}{2\hat{h}'_\sigma})\hat{\beta}_k = \lambda\hat{\beta}_k ~.
\end{aligned}\right.\end{align}
Here, $\dbr_{r,k}:\hat{L}_r\to\hat{L}_rK^{-1}_{S^2}$ is the usual Cauchy--Riemann operator associated with
$$   \hat{\nabla}_{r,k} = \ahr - i\frac{k}{V}\hat{g}\,\dd t ~.   $$  In other words, perturb the connection $\ahr$ by the globally defined $1$-form $-i\frac{k}{V}\hat{g}\,\dd t$.  The operator $\dbrs_{r,k}$ is the $L^2$-adjoint operator of $\dbr_{r,k}$.

With this reduction, the eigenvalues of $\hat{D}_r$ can be found by the Riemann--Roch formula and the vanishing argument.

\begin{prop}\label{prop_Dirac_hat_00}
There exists a constant $c_2$ determined by the stable Hamiltonian structure $(\hat{a}, \hat{\omega})$ and the metric $\dd s^2$ such that the following holds.
\begin{enumerate}
\item For any $r\geq c_2$, the spectrum of $\hat{D}_r$ on $\hat{L}_r\oplus\hat{L}_r\hat{K}^{-1}$ that lies within $[-(\frac{1}{3} r)^\oh,(\frac{1}{3} r)^\oh]$ consists of
\begin{align*}
\big\{\frac{r}{2} - \frac{k}{V} ~\big|~ k\in\BZ ~,\text{ and }~ |\frac{r}{2} - \frac{k}{V}|\leq(\frac{1}{3} r)^\oh \big\} ~.
\end{align*}
\item For any $r\geq c_2$ and any integer $k$ satisfying $|\frac{r}{2} - \frac{k}{V}|\leq(\frac{1}{3} r)^\oh$, the corresponding eigenspace has dimension $2[r]+1$, and is isomorphic to $\ker\dbr_{r,k}$ via the identification
\begin{align*}
\hat{\alpha}_k\in\ker\dbr_{r,k} &\mapsto \frac{1}{\sqrt{2\pi}}e^{ik\theta}(\hat{\alpha}_k,0) ~.
\end{align*}
\end{enumerate}
\end{prop}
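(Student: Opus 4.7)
The approach is to exploit the $S^1$-symmetry of $\hat{S}=S^2\times S^1$ in the $\theta$-factor.  All the data defining $\hat{D}_r$ (the stable Hamiltonian structure, the metric, $\hat{L}_r$, and $\hat{\nabla}_r$) are invariant under $\theta$-rotation, so $\hat{D}_r$ commutes with $\pl_\theta$, and Fourier decomposition reduces the eigenvalue problem to a family of first-order systems on $S^2$, one for each $k\in\BZ$.  On the Fourier mode $e^{ik\theta}(\hat{\alpha}_k,\hat{\beta}_k)/\sqrt{2\pi}$ the equation $\hat{D}_r\hat{\varphi}=\lambda\hat{\varphi}$ becomes exactly the system (\ref{eqn_hat_Dirac_01}) on $S^2$, with $M_k=r/2-k/V$ and the coupling $X=\hat{g}'/(2\hat{h}'_\sigma)$.

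For the existence half of the proposition, observe that if $\hat{\alpha}_k\in\ker\dbr_{r,k}$ then $(\hat{\alpha}_k,0)$ satisfies both equations of (\ref{eqn_hat_Dirac_01}) with $\lambda=M_k$, yielding an injection from $\ker\dbr_{r,k}$ into the $M_k$-eigenspace of $\hat{D}_r$.  To compute $\dim\ker\dbr_{r,k}$, I would apply Riemann--Roch on $S^2\cong\BC P^1$: $\mathrm{ind}\,\dbr_{r,k}=\deg\hat{L}_r+1=2[r]+1$.  A direct calculation gives the curvature of the perturbed connection $\hat{\nabla}_{r,k}=\ahr-i(k/V)\hat{g}\,\dd t$ as $iF_{r,k}=(r+2M_kX)\hat{\omega}$, which is strictly positive for $r$ large and $|M_k|\le(r/3)^{\oh}$.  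Kodaira--Nakano vanishing then forces $\HD{1}(\hat{L}_r)=0$, so $\ker\dbrs_{r,k}=0$ and $\dim\ker\dbr_{r,k}=2[r]+1$.

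The harder half is to rule out any other eigenvalue in $[-(r/3)^{\oh},(r/3)^{\oh}]$.  Given an eigensection with $|\lambda|^2\le r/3$, the plan is to compute $\hat{D}_r^2$ on the $\hat{\beta}_k$-component,
\begin{align*}
\lambda^2\hat{\beta}_k = (M_k-X)^2\hat{\beta}_k + 2\dbr_{r,k}\dbrs_{r,k}\hat{\beta}_k + \sqrt{2}\,X\,\dbr_{r,k}\hat{\alpha}_k ,
\end{align*}
pair with $\hat{\beta}_k$ in $L^2$, and bound the three right-hand terms.  The first contributes at most $\bigl(r/3+O(r^{\oh})\bigr)\|\hat{\beta}_k\|^2$.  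The second carries a Landau-type spectral gap $\|\dbrs_{r,k}\hat{\beta}_k\|^2\ge c_0 r\|\hat{\beta}_k\|^2$ with $c_0>\tfrac{1}{3}$, obtained from the Bochner--Kodaira identity and the lower bound $\mu_{r,k}\ge r/2$ on the curvature computed above.  The third is controlled by the second equation of (\ref{eqn_hat_Dirac_01}), which gives $\|\dbr_{r,k}\hat{\alpha}_k\|\le cr^{\oh}\|\hat{\beta}_k\|$ in the allowed range.  Combining the three yields $\lambda^2\ge(2c_0-\tfrac{1}{3})r-O(r^{\oh})$ whenever $\hat{\beta}_k\ne0$, contradicting $\lambda^2\le r/3$ for $r$ large.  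Hence $\hat{\beta}_k=0$, and then the system (\ref{eqn_hat_Dirac_01}) forces $\lambda=M_k$ and $\hat{\alpha}_k\in\ker\dbr_{r,k}$, completing the bijection.

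The main obstacle is pinning down the Landau gap $\|\dbrs_{r,k}\hat{\beta}_k\|^2\ge c_0 r\|\hat{\beta}_k\|^2$ with an explicit constant $c_0>\tfrac{1}{3}$.  This reduces to the Bochner--Kodaira identity for $\dbr\dbrs$ on $\hat{L}_r\otimes\hat{K}^{-1}$ together with the uniform curvature bound $\mu_{r,k}\ge r/2$; the details amount to a careful comparison of the Weitzenb\"ock terms against the zeroth-order curvature, and it is here that the specific choice of the connection $\ahr$ and the factor $\sigma=[r]/r$ enter decisively.
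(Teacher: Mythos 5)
Your overall strategy (Fourier decompose in $\theta$, identify the $\lambda=M_k:=\tfrac r2-\tfrac kV$ eigenspace with $\ker\dbr_{r,k}$, use Riemann--Roch plus Kodaira vanishing for the dimension, and a Bochner--Weitzenb\"ock argument to kill $\hat\beta_k$) is the one the paper uses, and your curvature computation $iF_{r,k}=(r+2M_kX)\hat\omega$ is correct.  However, there is a genuine gap in your non-existence argument.

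You bound the three terms of $\lambda^2\|\hat\beta_k\|^2 = \int(M_k-X)^2|\hat\beta_k|^2 + 2\|\dbrs_{r,k}\hat\beta_k\|^2 + \sqrt2\int X\langle\dbr_{r,k}\hat\alpha_k,\hat\beta_k\rangle$ \emph{separately}, but every one of your three bounds silently assumes $|M_k|=O(r^{1/2})$: the estimate ``first term $\le(r/3+O(r^{1/2}))\|\hat\beta_k\|^2$'' forces $|M_k|\lesssim r^{1/2}$; the curvature lower bound $\mu_{r,k}=r+2M_kX\ge r/2$ fails once $2M_kX\le -r/2$; and $\|\dbr_{r,k}\hat\alpha_k\|\lesssim r^{1/2}\|\hat\beta_k\|$ uses $|\lambda+M_k-X|\lesssim r^{1/2}$.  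But when ruling out stray eigenvalues the integer $k$ (hence $M_k$) ranges over all of $\BZ$, so you have no a priori control on $|M_k|$.  The three $M_k$-dependent contributions in fact cancel to leave only $M_k^2+M_kX$, which is bounded below regardless of the size of $M_k$; your term-by-term bounding destroys this cancellation.  The paper avoids the issue by substituting $2\dbr_{r,k}\dbrs_{r,k}\hat\beta_k=(\lambda-M_k)(\lambda+M_k-X)\hat\beta_k$ directly into the Weitzenb\"ock identity \emph{before} estimating, so that the surviving coefficient $(\tfrac r2-\tfrac kV)^2 + (r-\lambda^2+\tfrac{\hat g'}{\hat h'_\sigma}\lambda)+\kappa$ is manifestly positive for all $k$ once $|\lambda|^2\le r/3$.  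To salvage your version you would need a preliminary step ruling out $|M_k|\gtrsim r^{1/2}$, e.g.\ by pairing the first equation of (\ref{eqn_hat_Dirac_01}) with $\hat\alpha_k$, which yields $(M_k-\lambda)\|\hat\alpha_k\|^2+\int(\lambda+M_k-X)|\hat\beta_k|^2=0$ and forces $\hat\alpha_k=\hat\beta_k=0$ as soon as $|M_k|>|\lambda|+\sup|X|$.  Your claimed final bound $\lambda^2\ge(2c_0-\tfrac13)r-O(r^{1/2})$ is also puzzling: the first term is nonnegative and should not subtract $\tfrac13 r$.
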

\begin{proof}
Suppose that $\hat{\psi} = \frac{1}{2\pi}e^{ik\theta}(\hat{\alpha}_k,\hat{\beta}_k)$ is an eigensection of $\hat{D}_r$ whose eigenvalue $\lambda$ lies within $[-(\ot r)^{\oh},(\ot r)^\oh]$.  Integrating the Bochner--Weitzenb\"ock formula for $\beta_k$ gives
\begin{align*}
2\int_{S^2}|\dbrs_{r,k}\hat{\beta}_k|^2 &= \int_{S^2}|\hat{\nabla}_{r,k}\hat{\beta}_k|^2 + \int_{S^2}(\frac{iF_{\hat{A}_{r,k}}}{\hat{\omega}} + \kappa)|\hat{\beta}_k|^2
\end{align*}
where $\kappa$ is the Gaussian curvature.  The left hand side is equal to $2\int_{S^2}\langle\dbr_{r,k}\dbrs_{r,k}\hat{\beta}_k,\hat{\beta}_k\rangle$.  Using (\ref{eqn_hat_Dirac_01}) to replace $\dbr_{r,k}\dbrs_{r,k}\hat{\beta}_k$, the equation becomes
\begin{align*}
0 = \int_{S^2}|\hat{\nabla}_{r,k}\hat{\beta}_k|^2 + \int_{S^2}\big( (\frac{r}{2}-\frac{k}{V})^2 + (r-\lambda^2+\frac{\hat{g}'}{\hat{h}'_\sigma}\lambda) + \kappa \big)|\hat{\beta}_k|^2 ~.
\end{align*}
Since $|\lambda|\leq(\frac{1}{3} r)^{\oh}$, there exists a constant $c_3$ such that $\hat{\beta}_k$ vanishes for any $r\geq c_3$.  It follows from $\hat{\beta}_k\equiv0$ and (\ref{eqn_hat_Dirac_01}) that
\begin{align*}\left\{\begin{aligned}
|\frac{r}{2}-\frac{k}{V}|&\leq (\frac{1}{3} r)^{\oh} ~,\\
\dbr_{r,k}\hat{\alpha}_k &=0 ~.
\end{aligned}\right.\end{align*}

According to the Riemann--Roch formula, it suffices to show that the kernel of $\dbrs_{r,k}$ is trivial to conclude that the dimension of $\ker\dbr_{r,k}$ is $2[r]+1$.  The vanishing of $\ker\dbrs_{r,k}$ follows form the same Bochner--Weitzenb\"ock formula and the condition $|\frac{r}{2}-\frac{k}{V}|\leq (\frac{1}{3} r)^{\oh}$.  This finishes the proof of the lemma.
\end{proof}

We need the following notion to describe the eigensections of $\hat{D}_r$.

\begin{defn}\label{defn_Dirac_hat_01}
There exists a constant $c_4$ such that for any $r\geq c_4$ and $|\frac{r}{2}-\frac{k}{V}|\leq (\frac{1}{3} r)^{\oh}$,
\begin{align*}
r\hat{h}'_\sigma + (\frac{r}{2} - \frac{k}{V})\hat{g}' > 0
\end{align*}
for any $\rho\in(0,2)$.  For any $r\geq c_4$ and any integer $n\in(-2[r],0)$, let $\hat{\rho}_{k,n}\in(0,2)$ be the unique solution of
$$r(\hat{h}_\sigma(\hat{\rho}_{k,n})+\oh\hat{g}(\hat{\rho}_{k,n})) - \frac{k}{V}\hat{g}(\hat{\rho}_{k,n}) + n = 0 ~.$$
For $n=0$, set $\hat{\rho}_{k,n}$ to be $0$.  For $n=-2[r]$, set $\hat{\rho}_{k,n}$ to be $2$.
\end{defn}

\begin{prop}\label{prop_Dirac_hat_01}
There exists a constant $c_5$ determined by the stable Hamiltonian structure $(\hat{a}, \hat{\omega})$ and the metric $\dd s^2$ with the following significance.  For any $r\geq c_5$ and any integer $k$ with $|\frac{r}{2}-\frac{k}{V}|\leq(\frac{1}{3} r)^\oh$, $\ker\dbr_{r,k}$ has an orthonormal basis $\{\hat{\alpha}_{k,n} = \hat{\alpha}_{k,n}^{\appr} + \hat{\alpha}_{k,n}^{\err}\}_{-2[r]\leq n\leq0}$ satisfying the following properties.
\begin{enumerate}
\item With respect to the trivialization $\bo_-$, $\pl_t \hat{\alpha}_{k,n} = in\,\hat{\alpha}_{k,n}$, so do $\hat{\alpha}_{k,n}^{\appr}$ and $\hat{\alpha}_{k,n}^{\err}$.
\item If $20\delta<\hat{\rho}_{k,n}<2-20\delta$, the support of $\hat{\alpha}^{\appr}_{k,n}$ is contained in $\{|\rho - \hat{\rho}_{k,n}|\leq2\delta\}$.  If $\hat{\rho}_{k,n}\leq20\delta$, the support of $\hat{\alpha}^{\appr}_{k,n}$ is contained in $\{\rho\leq40\delta\}$.  If $\ck{\rho}_{k,n}\geq2-20\delta$, the support of $\hat{\alpha}^{\appr}_{k,m}$ is contained in $\{\rho\geq2-40\delta\}$.
\item $\hat{\alpha}_{k,n}^{\appr}$ almost solves $\dbr_{r,k}$ in the sense that
\begin{align*}
\qquad \int_{S^2}|\dbr_{r,k}(\hat{\alpha}_{k,n}^{\appr})|^2 &\leq c_5e^{-\frac{r}{c_5}} < c_5r^{-6} ~.
\end{align*}
\item The remainder term $\hat{\alpha}_{k,n}^{\err}$ obeys
\begin{align*}
\int_{S^2}|\hat{\alpha}_{k,n}^{\err}|^2\leq c_5e^{-\frac{r}{c_5}}<c_5r^{-7} \qquad\text{and}\qquad \int_{S^2}\langle\hat{\alpha}_{k,n},\hat{\alpha}_{k,n}^{\err}\rangle = 0 ~.
\end{align*}
\item If $|n-\frac{k}{V}+[r]|<\frac{k}{V}(48\delta)$, the support of $\hat{\alpha}_{k,n}^{\appr}$ is contained in $\{|\rho-1|<50\delta\}$, and thus can be regarded as a smooth function on $S^2$ (with respect to $\bo_-$).  It is proportional to the approximate eigensection $\ck{\varphi}_{k,n+[r]}^{\appr}$ given by Proposition \ref{prop_Dirac_ck_01}.  More precisely,
\begin{align*}
\ck{\varphi}_{k,n+[r]}^{\appr} = \frac{1}{\sqrt{2\pi}}e^{i(k\theta+[r]t)}(\fc_{k,n}\hat{\alpha}_{k,n}^{\appr},0)
\end{align*}
for some scalar $\fc_{k,n}$ with $|\fc_{k,n}-1|\leq c_5e^{-\frac{r}{c_5}}$.  Here $\ck{\varphi}_{k,n+[r]}^{\appr}$ is identified with a $\BC^2$ valued function by (\ref{eqn_trivial_ck_01}).
\end{enumerate}
\end{prop}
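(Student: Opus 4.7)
The plan is to exploit the $S^1$-symmetry in $t$ to reduce the Cauchy--Riemann equation $\dbr_{r,k}\hat\alpha_k=0$ on $S^2$ to an explicit first-order ODE in $\rho$.  Because the metric, the stable Hamiltonian structure, and the trivialization $\bo_-$ of $\hat L_r$ are all invariant under translation in $t$, the vector field $\pl_t$ commutes with $\dbr_{r,k}$ and $\ker\dbr_{r,k}$ splits into its $\pl_t$-eigenspaces.  Writing a kernel element in the $\bo_-$ trivialization as $\hat\alpha_k = u_n(\rho)\,e^{int}$ and reading off the bottom-left block of \eqref{eqn_hat_Dirac_00}, the equation collapses to
\begin{align*}
u_n'(\rho) + \frac{\Phi_n(\rho)}{\hat h'_\sigma(\rho)}\,u_n(\rho) = 0, \qquad \Phi_n(\rho) = n + r\hat h_\sigma(\rho) + \Big(\frac{r}{2}-\frac{k}{V}\Big)\hat g(\rho),
\end{align*}
so that the unique (up to scalar) solution is $u_n(\rho) = C\exp\bigl(-\int_{\hat\rho_{k,n}}^{\rho}\Phi_n/\hat h'_\sigma\,\dd s\bigr)$, with $\hat\rho_{k,n}$ from Definition~\ref{defn_Dirac_hat_01} being the zero of $\Phi_n$.

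Next I would pin down the admissible $n$ by imposing smoothness at the two poles.  Near $\rho=0$ one has $\hat h_\sigma = \oh\rho^2$, $\hat g = 0$, $\hat h'_\sigma = \rho$, giving $u_n \sim \rho^{-n} e^{-r\rho^2/4}$; in the $\bo_-$ trivialization, smoothness at the south pole forces $n\leq 0$.  Near $\rho = 2$ one switches to $\bo_+ = e^{-2i[r]t}\bo_-$, under which the $t$-frequency becomes $n+2[r]$; the analogous local computation gives $u_n \sim (2-\rho)^{n+2[r]} e^{-r(2-\rho)^2/4}$, and smoothness at the north pole requires $n+2[r]\geq 0$.  The admissible range $\{-2[r],\ldots,0\}$ produces $2[r]+1$ solutions that are pairwise orthogonal because they live in distinct $\pl_t$-eigenspaces, and they saturate the dimension of $\ker\dbr_{r,k}$ given by Proposition~\ref{prop_Dirac_hat_00}.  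After $L^2$-normalization this yields the claimed basis $\{\hat\alpha_{k,n}\}$, and (i) holds by construction.

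To obtain (ii)--(iv), I would extract the Gaussian concentration from the phase $\Phi_n$.  The monotonicity assumption of Definition~\ref{defn_Dirac_hat_01} makes $\hat\rho_{k,n}$ the unique zero, and a direct second-derivative computation shows that $\int_{\hat\rho_{k,n}}^{\rho}\Phi_n/\hat h'_\sigma\,\dd s$ grows quadratically in $(\rho - \hat\rho_{k,n})$ with second derivative of order $r$ (including near the poles, where the algebraic factor $\rho^{|n|}$ conspires with $e^{-r\rho^2/4}$ to produce a peak at $\rho\sim\sqrt{|n|/r}$ of the same effective width $r^{-1/2}$).  Hence $|\hat\alpha_{k,n}|$ is Gaussian of width $r^{-1/2}$ about $\hat\rho_{k,n}$, and its $L^2$-mass outside any fixed $\delta$-neighborhood of $\hat\rho_{k,n}$ is bounded by $e^{-r/c}$.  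Define $\hat\alpha_{k,n}^{\appr} = \fc\,\chi(\rho)\,\hat\alpha_{k,n}$ where $\chi$ is a smooth cutoff supported as in (ii) and equal to $1$ on the next-smaller $\delta$-scale, and choose $\fc$ so that $\hat\alpha_{k,n}^{\err} := \hat\alpha_{k,n} - \hat\alpha_{k,n}^{\appr}$ is $L^2$-orthogonal to $\hat\alpha_{k,n}$; the tail bound forces $|\fc-1|$ and $\|\hat\alpha_{k,n}^{\err}\|_{L^2}^2$ to be $\leq e^{-r/c}$.  Since $\dbr_{r,k}\hat\alpha_{k,n}^{\appr} = \fc(\dbr\chi)\hat\alpha_{k,n}$ is supported where $\chi$ transitions and $|\hat\alpha_{k,n}|\leq e^{-r/c}$, assertion (iii) follows.

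Part (v) is then essentially automatic.  Inside $\{|\rho-1|<50\delta\}$, the functions $\hat h_\sigma$ and $\hat g$ agree with $\ex h_\sigma$ and $\ex g$, and via the open-book data $f(\rho)=V$, $g(\rho)=2-\rho$ from \S\ref{sec_TW_01}, direct substitution identifies $\Phi_n(\rho)=0$ with $k\,g(\rho)=(n+[r])\,f(\rho)$, the equation defining $\ck\rho_{k,n+[r]}$; thus $\hat\rho_{k,n}=\ck\rho_{k,n+[r]}$ in the overlap, and the ODE satisfied by the $\underline{\BC}$-component of $\ck\varphi_{k,n+[r]}^{\appr}$ on that region is precisely the same first-order equation.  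The transition rule $e^{i[r]t}\bo_\Sigma=\bo_-$ from \S\ref{subsec_deg_r} yields the $e^{i[r]t}$ factor in the formula, and the two normalizations differ from $L^2$-unity only via exponentially small tails, so $|\fc_{k,n}-1|\leq c\,e^{-r/c}$.  The main obstacle is the quantitative concentration estimate of the third step: one needs a single uniform Gaussian bound that is insensitive to whether $\hat\rho_{k,n}$ lies deep in the interior, near a pole (where $\hat h'_\sigma$ vanishes and algebraic decay must be blended with the Gaussian), or near $\rho=1$, and the cutoff construction and normalization must be kept self-consistent so that assertions (ii)--(v) hold simultaneously.
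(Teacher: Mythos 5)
Your proposal reproduces the paper's argument in \S\ref{subsec_ap2} almost exactly: reduce $\dbr_{r,k}=0$ to a first-order ODE in $\rho$ via the $\pl_t$-symmetry, solve by the integrating factor $\exp(-\int_{\hat\rho_{k,n}}^\rho\Phi_n/\hat h'_\sigma\,\dd s)$ to recover equation \eqref{eqn_a202}, identify the admissible range $-2[r]\leq n\leq 0$ from regularity at the two poles (matching the dimension $2[r]+1$ from Proposition~\ref{prop_Dirac_hat_00}), then cut off around $\hat\rho_{k,n}$ and normalize so that $\hat\alpha^{\err}_{k,n}\perp\hat\alpha_{k,n}$, with (v) following from the observation that on $\{|\rho-1|<50\delta\}$ the coefficient $\Phi_n/\hat h'_\sigma$ linearizes to $\frac{2k}{V}(\rho-\hat\rho_{k,n})$ and hence coincides with the $\underline{\BC}$-component equation for $\ck\varphi^{\appr}_{k,n+[r]}$. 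This is the same approach as the paper, and like the paper you correctly defer the uniform tail estimate near the poles to the existing construction in the prior work.
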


The condition of (iv) $|n-\frac{k}{V}+[r]|<(48k\delta)/V$ transforms to $|m-\frac{k}{V}|<(48k\delta)/V$ by $m=n+[r]$.  They are equivalent to that $|\hat{\rho}_{k,n}-1|<48\delta$ and $|\ck{\rho}_{k,m}-1|<48\delta$, respectively.  The proof of the proposition is basically by solving ordinary differential equations with integral factor.  The proof appears in \S\ref{subsec_ap2}.

\section{Gluing Eigensections}\label{sec_gluing}
The main purpose of this section is to prove that the ``small eigenvalues" of $(Y,D_r)$ is almost the same as that of $(\ex{Y},\ex{D}_r)$.  The strategy is to divide $[-\oh r^\oh, \oh r^\oh]$ into sub-intervals about of unit length, and to show that the total number of eigenvalues of $D_r$ and $\ex{D}_r$ are same within each sub-interval.

\begin{lem}\label{lem_glue_01}
There exist constants $c_1$ and $c_2$ determined by the contact form $a$, the metric $\dd s^2$, and the connection $\aE$ with the following property.  For any $r\geq c_1$, there exists a sequence of numbers $\{\nu_j : -[\oh r^\oh]<j<[\oh r^\oh]\}$ such that
\begin{enumerate}
\item for any $j$, $|\nu_j - j|\leq\frac{1}{10}$, and thus $-\oh r^\oh < \cdots < \nu_j < \nu_{j+1} < \cdots < \oh r^\oh$;
\item for any $j$, there is no spectrum within $[\nu_j - c_2 r^{-1}, \nu_j + c_2 r^{-1}]$ for $(\ex{Y},\ex{D}_r)$, $(\ck{S},\ck{D}_r)$ and $(\hat{S},\hat{D}_r)$;
\item for any $j$, there is no spectrum within $[\nu_j - c_2 r^{-\frac{3}{2}},\nu_j + c_2r^{-\frac{3}{2}}]$ for $(Y,D_r)$.
\end{enumerate}
\end{lem}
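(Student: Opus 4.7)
The plan is a pigeonhole measure argument on each sub-interval $I_j := [j-\tfrac{1}{10}, j+\tfrac{1}{10}]$. For fixed $j$ with $|j| < [\tfrac{1}{2}r^{1/2}]$, write $N(\fR)$ for the count (with multiplicity) of eigenvalues of an operator $\fR$ in $I_j$. I would first establish
\begin{align*}
N(\ex{D}_r),\; N(\ck{D}_r),\; N(\hat{D}_r) \;\leq\; c_3 r
\qquad\text{and}\qquad N(D_r) \;\leq\; c_4 r^{3/2}~,
\end{align*}
and then observe that the union inside $I_j$ of the intervals of radius $c_2 r^{-1}$ centered at points of $\spec(\ex{D}_r) \cup \spec(\ck{D}_r) \cup \spec(\hat{D}_r)$, together with the intervals of radius $c_2 r^{-3/2}$ centered at points of $\spec(D_r)$, has total Lebesgue measure at most $c_5 c_2$. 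Taking $c_2$ small enough in terms of the geometric data makes this strictly less than $|I_j| = \tfrac{1}{5}$, so there is a $\nu_j \in I_j$ in the complement. Property (i) is automatic from $\nu_j \in I_j$, and the choice provides (ii) and (iii).

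Three of the counts follow from structural results already established. Corollary \ref{cor_Vafa_Witten} gives $N(\ex{D}_r) \leq c_3 r$ after extending $I_j$ (if necessary) to an interval of length at least $V^{-1}$. Proposition \ref{prop_Dirac_hat_00}(i) identifies the spectrum of $\hat{D}_r$ in $[-(r/3)^{1/2},(r/3)^{1/2}]$ with the arithmetic progression $\{r/2 - k/V\}$, each eigenvalue of multiplicity $2[r]+1$; only $O(1)$ values of $k$ yield elements of $I_j$, so $N(\hat{D}_r) = O(r)$. For $\ck{D}_r$, Proposition \ref{prop_Dirac_ck_01}(ii) says that each eigenvalue in $I_j$ corresponds to some $(k,m) \in \BZ_{>0}\times\BZ$ with $\ck{\gamma}_{k,m} \in [r-2j-c_6, r-2j+c_6]$; parametrizing the level set $\{\ck{\gamma}=c\}$ in the $(k,m)$-plane by $(k,m) = \tfrac{c}{2}(\ck{f}(\ck{\rho}),\ck{g}(\ck{\rho}))$ for $\ck{\rho}\in(0,2)$, such lattice pairs lie within an $O(1)$-neighborhood of a smooth curve of length $O(r)$, and hence number $O(r)$.

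The remaining bound $N(D_r) \leq c_4 r^{3/2}$ is the principal obstacle. The Vafa--Witten argument of \S\ref{sec_Vafa_Witten} does not apply directly to $D_r$ since the canonical map $\ex{Y}\to S^1$ has no analogue on the open book $Y$ (which does not fiber over $S^1$). My plan is to invoke a semiclassical Weyl-type estimate: the analogue of Proposition \ref{prop_ext_estimate} for $D_r$ established as \cite[Proposition 2.2]{ref_Ts2} shows that any eigensection $\psi = (\alpha,\beta)$ of $D_r$ with $|\lambda|^2 \leq \tfrac{3}{4}r$ obeys $\int_Y|\beta|^2 \leq c r^{-1}\int_Y|\alpha|^2$ and $\int_Y|\nabla_r\alpha|^2 \leq cr \int_Y|\alpha|^2$. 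The dimension of any space of sections satisfying an $H^1$-type bound of this size is controlled by Dirichlet--Neumann bracketing against a partition of $Y$ into geodesic balls of radius $r^{-1/2}$: each ball contributes $O(1)$ eigenmodes to the comparison operator, for a total of $O(\vol(Y)\cdot r^{3/2})$.

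With all four counts in hand, the measure bound $\leq c_5 c_2$ follows by direct arithmetic, and the pigeonhole conclusion is immediate. The main difficulty is the $O(r^{3/2})$ Weyl-type bound for $D_r$, which requires the uniform-in-$r$ bracketing argument sketched above; all other ingredients are straightforward consequences of Corollary \ref{cor_Vafa_Witten} or the explicit diagonalizations of $\ck{D}_r$ and $\hat{D}_r$ obtained in \S\ref{sec_local_model}.
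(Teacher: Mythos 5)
Your strategy — bound the eigenvalue counts of all four operators in each unit sub-interval and then pigeonhole to locate a gap-point $\nu_j$ — is the same as the paper's; your measure-theoretic pigeonhole and the paper's nested interval subdivision are just two phrasings of the same counting argument. The substantive difference is in how you propose to justify the counts, and here the paper is considerably shorter: the bound you flag as ``the principal obstacle,'' namely that $(Y,D_r)$ has at most $O(r^{3/2})$ eigenvalues in $[-\tfrac12 r^{1/2},\tfrac12 r^{1/2}]$, is cited directly from \cite[Corollary 3.3(i)]{ref_Ts2}; the Dirichlet--Neumann bracketing you sketch would re-prove a result already available from Part I, so it is an unnecessary detour (though plausible in spirit). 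Likewise, the paper gets the $O(r)$ per-sub-interval count for $\ck{D}_r$ by citing \cite[Corollary 3.4]{ref_Ts2} (which applies because the metric on $\ck{S}$ is genuinely adapted, not merely conformally adapted), whereas you re-derive it by counting lattice points near a level curve of $\ck{\gamma}$; your count is a valid alternative, and it correctly exploits that the band $\{|\ck{\gamma}-c|\le c_6\}$ has radial width $O(1)$ rather than $O(r)$, but again citation is the faster route. The counts for $\ex{D}_r$ (via Corollary~\ref{cor_Vafa_Witten}) and $\hat{D}_r$ (via the explicit spectrum in Proposition~\ref{prop_Dirac_hat_00}(i)) match the paper. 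In short: nothing is wrong, but you should replace the proposed bracketing argument with a citation of \cite[Corollary 3.3(i)]{ref_Ts2}, and may similarly replace the lattice-curve count with \cite[Corollary 3.4]{ref_Ts2}.
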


\begin{proof}
For any $j\in\{-[\oh r^\oh]+1,-[\oh r^\oh]+2,\cdots,[\oh r^\oh]-1\}$, consider the interval
$$ U_j = [j-\frac{1}{15},j+\frac{1}{15}] ~. $$
\begin{itemize}
\item According to Corollary \ref{cor_Vafa_Witten} and Proposition \ref{prop_Dirac_hat_00}, there exists a constant $c_3$ such that the total number of eigenvalues of $(\ex{Y},\ex{D}_r)$ and $(\hat{S},\hat{D}_r)$ within $U_j$ is less than $c_3 r$.
\item Since $(\ck{S},\ck{D}_r)$ is constructed from a contact form with an \emph{adapted} metric (\ref{eqn_metric_ck}), \cite[Corollary 3.4]{ref_Ts2} implies that the total number of eigenvalues of $(\ck{S},\ck{D}_r)$ within $U_j$ is less than $c_4 r$ for some constant $c_4$.
\item Due to \cite[Corollary 3.3(i)]{ref_Ts2}, there exists a constant $c_5$ such that the total number of eigenvalues of $(Y,D_r)$ within $[-\oh r^\oh, \oh r^\oh]$ is less than $c_5 r^{\frac{3}{2}}$.  It follows that the total number of eigenvalues within $U_j$ is bounded by $c_5 r^{\frac{3}{2}}$.  The metric is only conformally adapted, and \cite[Corollary 3.4]{ref_Ts2} cannot apply to $(Y,D_r)$.
\end{itemize}
Let $c_6 = \max\{c_3,c_4\}$.  Divide $U_j$ into sub-intervals of length between $(60 c_6 r)^{-1}$ and $(30 c_6 r)^{-1}$.  There are at least $4c_6 r$ sub-intervals.  Let $\{U_{j,k}\}_{1\leq k\leq K}$ be the sub-intervals which do not contain any eigenvalues of $(\ex{Y},\ex{D}_r)$, $(\ck{S},\ck{D}_r)$ and $(\hat{S},\hat{D}_r)$.  It follows from the pigeonhole principle that $K\geq 3c_6 r$.

Let $\oh U_{j,k}$ be the sub-interval of $U_{j,k}$ with the same midpoint and of half length.  Further divide each $\oh U_{j,k}$ into sub-intervals of length between $(60 c_5)^{-1} r^{-\frac{3}{2}}$ and $(50 c_5)^{-1} r^{-\frac{3}{2}}$.  The total number of sub-intervals is at least
$$ (3c_6 r)\times\frac{1}{120 c_6 r}\times (50 c_5 r^{\frac{3}{2}}) > c_5r^{\frac{3}{2}} ~. $$
Hence, there exists some sub-interval which does contain any eigenvalue of $(Y,D_r)$.  Choose any one of such a sub-interval, and set $\nu_j$ to be its midpoint.  It follows from the construction that $\nu_j$ satisfies the assertion of the lemma with $c_2 = \frac{1}{500}(\max\{c_5,c_6\})^{-1}$.
\end{proof}

Item (ii) of the lemma guarantees spectral gaps of $2c_2r^{-1}$.  This allows us to invert the Dirac operator.  It plays a key role for gluing eigensections.

\begin{defn}\label{defn_glue_01}
Let $c_1$ be the same constant of Lemma \ref{lem_glue_01}.  For any $r\geq c_1$, let $\{\nu_j : -[\oh r^\oh]<j<[\oh r^\oh]\}$ be the sequence given by this same lemma.  With this sequence, introduce the following sets of eigenvalues for any $r\geq c$ and $-[\oh r^\oh]<j<[\oh r^\oh]-1$:
\begin{align*}
\CI_j &= \big\{\lambda\in\spec(D_r) ~\big|~ \nu_j<\lambda<\nu_{j+1}\big\} ~, \\
\ex{\CI}_j &= \big\{\lambda\in\spec(\ex{D}_r) ~\big|~ \nu_j<\lambda<\nu_{j+1}\big\} ~.
\end{align*}
The definition abuses\footnote{The set-theoretically correct definition is $\big\{(\lambda,k)\in\BR\times\BN ~\big|~ \lambda\in\spec(D_r),~\nu_j<\lambda<\nu_{j+1},~ k\leq\dim\ker(D_r-\lambda\BI)\big\}$.} the notation: the multiplicity of eigenvalues is counted.  Also introduce the following index sets:
\begin{align*}
\ck{\CI}_j &= \big\{(k,m)\in\BZ_{\geq0}\times\BZ ~\big|~ \spec(\ck{D}_r|_{\CS_{k,m}})\cap(\nu_j,\nu_{j+1})\neq\varnothing,\text{ and }k<mV\big\} ~, \\
\hat{\CI}_j &= \big\{(k,n)\in\BZ\times\BZ ~\big|~ \nu_j<\frac{r}{2}-\frac{k}{V}<\nu_{j+1},\text{ and }\frac{k}{V}-[r]<n\leq0 \big\} ~.
\end{align*}
The condition that $k<mV$ is equivalent to $\ck{\rho}_{k,m}<1$.  The condition that $k<(n+[r])V$ is equivalent to $\hat{\rho}_{k,n}<1$.
\end{defn}

As explained in the proof of Lemma \ref{lem_glue_01}, there exists a constant $c_7$ such that
\begin{align}\label{eqn_glue_14}
\#\ex{\CI}_j + \#\ck{\CI}_j + \#\hat{\CI}_j &\leq c_7r    &\text{and}&    &\#\CI_j &\leq c_7r^{\frac{3}{2}}
\end{align}
for any $r\geq c_7$ and $-[\oh r^\oh]<j<[\oh r^\oh]-1$.

\begin{thm}\label{thm_glue_01}
There exists a constant $c_8>c_1$ determined by the contact form $a$, the metric $\dd s^2$ and the connection $\aE$ with the following property.  For any $r\geq c_8$, let $\{\nu_j : -[\oh r^\oh]<j<[\oh r^\oh]\}$ be the sequence given by Lemma \ref{lem_glue_01}.  Then,
\begin{align*}
\#\CI_j - \#\ck{\CI}_j = \#\ex{\CI}_j - \#\hat{\CI}_j
\end{align*}
for any $r\geq c_8$ and $-[\oh r^\oh]<j<[\oh r^\oh]-1$.  As a consequence, $\#\CI_j\leq 2c_7r$.
\end{thm}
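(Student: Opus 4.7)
My plan is to establish the identity by constructing mutually compatible, near-isometric correspondences between the two sides, then appealing to the spectral gaps of Lemma~\ref{lem_glue_01} to convert approximate eigensections into genuine eigenvalue counts. Geometrically, $Y$ and $\ex{Y}$ are obtained from the common piece $\Sigma\times_\tau S^1$ by attaching the handles $S^1\times B$ and $B\times S^1$ respectively, while the local models $\ck{S}$ and $\hat{S}$ were engineered so that $D_r\equiv\ck{D}_r$ on the former handle and $\ex{D}_r\equiv\hat{D}_r$ on the latter. An eigensection of $D_r$ in $\CI_j$ should therefore split into a ``mapping-torus contribution,'' identifiable across $Y$ and $\ex{Y}$ since $D_r\equiv\ex{D}_r$ on $\Sigma\times_\tau S^1$, and a ``handle contribution'' captured by $\ck{\CI}_j$; similarly for $\ex{D}_r$ and $\hat{\CI}_j$. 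Cancelling the common mapping-torus contributions produces the claimed identity $\#\CI_j+\#\hat{\CI}_j=\#\ex{\CI}_j+\#\ck{\CI}_j$.

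\textbf{Key steps.} First, partition $\ck{\CI}_j=\ck{\CI}_j^{\mathrm{int}}\sqcup\ck{\CI}_j^{\mathrm{col}}$ and $\hat{\CI}_j=\hat{\CI}_j^{\mathrm{int}}\sqcup\hat{\CI}_j^{\mathrm{col}}$ according to whether $\ck{\rho}_{k,m}$ or $\hat{\rho}_{k,n}$ lies in $[0,1-48\delta]$ or in $(1-48\delta,1)$. Proposition~\ref{prop_Dirac_hat_01}(v) supplies a canonical bijection $(k,n)\leftrightarrow(k,n+[r])$ between $\hat{\CI}_j^{\mathrm{col}}$ and $\ck{\CI}_j^{\mathrm{col}}$ under which the approximate eigensections coincide up to the $\ex{L}_r$-transition factor $e^{i[r]t}/\sqrt{2\pi}$, so these collar terms cancel in the difference $\#\ck{\CI}_j-\#\hat{\CI}_j$. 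Second, for each $(k,m)\in\ck{\CI}_j^{\mathrm{int}}$, Proposition~\ref{prop_Dirac_ck_01}(ii) places $\ck{\varphi}^{\appr}_{k,m}$ inside a region where $\ck{S}$ is isometric to a subset of $S^1\times B\subset Y$; extension by zero produces an approximate $D_r$-eigensection with $L^2$-error $O(r^{-3})\ll c_2 r^{-3/2}$, so Lemma~\ref{lem_glue_01}(iii) yields an actual eigenvalue in $\CI_j$. The mirror construction maps $\hat{\CI}_j^{\mathrm{int}}$ into $\ex{\CI}_j$. Third, introduce a cutoff $\chi(\rho)$ equal to $1$ on $\{\rho\le 1-80\delta\}$ and $0$ on $\{\rho\ge 1-60\delta\}$, chosen so that the transition region sits well inside $\{\rho\ge 1-50\delta\}$ where $D_r\equiv\ex{D}_r$ and is disjoint from the supports of all approximate eigensections of $\ck{D}_r$ and $\hat{D}_r$ with collar or ``outside'' concentration. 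For each eigensection $\psi$ of $D_r$ in $\CI_j$, split $\psi=\chi\psi+(1-\chi)\psi$: the first piece transplants to $\ck{S}$ as an approximate $\ck{D}_r$-eigensection localized into $\ck{\CI}_j^{\mathrm{int}}$, while the second extends to $\ex{Y}$ as an approximate $\ex{D}_r$-eigensection in $\ex{\CI}_j$. The symmetric operation applied to $\ex{D}_r$-eigensections yields approximate $D_r$- and $\hat{D}_r$-eigensections. Disjointness of supports makes the cross terms vanish in a Gram-matrix computation, so all four maps are injective modulo negligible error; combined with the spectral-gap bounds this forces the claimed equality. The bound $\#\CI_j\le 2c_7 r$ is then immediate from~\eqref{eqn_glue_14}.

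\textbf{Main obstacle.} The essential analytic input is the bound $\|[D_r,\chi]\psi\|_{L^2}=o(r^{-3/2})$ (and its $\ex{D}_r$-analogue) for \emph{arbitrary} eigensections $\psi\in\CI_j$: unlike the model eigensections of $\ck{D}_r$ and $\hat{D}_r$, a generic $D_r$-eigensection has no a priori concentration. I would handle this by Fourier-decomposing $\psi$ on the $S^1\times S^1$-invariant collar $\{1-80\delta<\rho<1-60\delta\}$ into modes $e^{i(k\theta+mt)}$, using the $D_r$-analogue of Proposition~\ref{prop_ext_estimate} from Part~I to control the $EK^{-1}$-component, and establishing through a Bochner--Weitzenb\"ock estimate in each mode the exponential decay in $\rho$ for every mode whose ``classical turning point'' sits outside the cutoff transition. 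The cutoff is chosen precisely so that modes with turning point inside the transition region correspond to pairs $(k,m)$ already excluded by the interior/collar partitioning; every remaining mode decays exponentially in the cutoff strip, giving $\|[D_r,\chi]\psi\|_{L^2}=O(e^{-r/c})$. Establishing this uniform mode-by-mode exponential decay is the hardest step; once it is in hand, the Gram-matrix bookkeeping closes and Theorem~\ref{thm_glue_01} follows.
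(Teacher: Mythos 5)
Your overall strategy is in the right spirit --- approximate eigensections, cutoffs, spectral gaps from Lemma~\ref{lem_glue_01}, and the collar bijection from Proposition~\ref{prop_Dirac_hat_01}(v) --- but the central step (your Step~3 together with the ``Main obstacle'' paragraph) has a genuine gap, and it is precisely the point where the paper inserts a different idea.

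You propose to cut every eigensection $\psi\in\CV_j$ of $D_r$ directly: $\psi=\chi\psi+(1-\chi)\psi$, and you claim that $\|[D_r,\chi]\psi\|_{L^2}=O(e^{-r/c})$ for arbitrary $\psi\in\CV_j$. This cannot be right. When $(k,m)$ has turning point $\ck{\rho}_{k,m}$ lying in, or within $O(r^{-1/2})$ of, your transition strip $(1-80\delta,1-60\delta)$, the corresponding local-model eigensection $\ck{\varphi}_{k,m}$ is concentrated exactly there, and Proposition~\ref{prop_Dirac_ck_01}(iii) guarantees that $D_r$ on $Y$ does have a genuine eigensection within $O(r^{-3})$ of it. For that eigensection, $\cl(\dd\chi)\psi$ has $L^2$-norm of order $1$, not $e^{-r/c}$. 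Your stated remedy --- that ``modes with turning point inside the transition region correspond to pairs $(k,m)$ already excluded by the interior/collar partitioning'' --- does not hold: you partition $\ck{\CI}_j$ at $\ck{\rho}_{k,m}=1-48\delta$, while your cutoff transitions over $(1-80\delta,1-60\delta)$. A pair with $\ck{\rho}_{k,m}\in(1-80\delta,1-60\delta)$ is classified as ``interior'' yet sits squarely in the cutoff strip; it is neither excluded nor suppressed. Moreover, for pairs whose turning point is merely \emph{outside} the strip but at distance $O(r^{-1/2}\log r)$, the decay of the genuine $D_r$-eigensection in the strip is only polynomial (the eigensection differs from $\ck{\varphi}_{k,m}^{\appr}$ by $O(r^{-7/2})$ in $L^2$ globally, cf.\ Proposition~\ref{prop_Dirac_ck_01}(ii.b), and that error has no pointwise localization), so the Fourier-mode Bochner--Weitzenb\"ock argument cannot be pushed to $O(e^{-r/c})$ uniformly.

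What the paper does instead is remove \emph{all} of $\ck{\CV}_j=\spn\{\ck{\varphi}_{k,m}^{\appr}:(k,m)\in\ck{\CI}_j\}$ from $\CV_j$ \emph{before} any cutoff is applied (Lemma~\ref{lem_glue_02}): one shows $\pr_j\colon\ck{\CV}_j\to\CV_j$ is injective, passes to the $L^2$-orthogonal complement $\ck{\CV}_j^\prp\subset\CV_j$ of dimension $\#\CI_j-\#\ck{\CI}_j$, and diagonalizes $D_r$ modulo a controlled error on this subspace to obtain $\psi_{j,\ell}$. The orthogonality of $\psi_{j,\ell}$ to every $\ck{\varphi}_{k,m}^{\appr}$ with $(k,m)\in\ck{\CI}_j$ is what makes the mass near the bindings small: one proves (Lemma~\ref{lem_glue_03}) a reverse-H\"older inequality $\int_{\rho\le\rho_0-5\delta}|\psi_{j,\ell}|^2\le c(r^{-6}+r^{-1}\int_{\rho\le\rho_0}|\psi_{j,\ell}|^2)$ and iterates it (Corollary~\ref{cor_glue_01}) to get $\int_{\rho\le1-30\delta}|\psi_{j,\ell}|^2\le cr^{-6}$ --- polynomial, not exponential, decay, and only after the projection. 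With this in hand, $\ex{\chi}\psi_{j,\ell}$ and the $\hat{\varphi}_{k,n}^{\appr}$ for $(k,n)\in\hat{\CI}_j$ together form an approximately orthonormal family of $(\#\CI_j-\#\ck{\CI}_j)+\#\hat{\CI}_j$ approximate eigensections of $\ex{D}_r$ with eigenvalues in $(\nu_j,\nu_{j+1})$, and the linear-algebra Lemma~\ref{lem_glue_04} plus the spectral gap of Lemma~\ref{lem_glue_01}(ii) yields $(\#\CI_j-\#\ck{\CI}_j)+\#\hat{\CI}_j\le\#\ex{\CI}_j$; the reverse inequality is by symmetry. In short: the missing ingredient in your proposal is the orthogonal projection away from the local-model directions \emph{before} the cutoff. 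Without it, the commutator error is not small, and the counting does not close.
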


The proof of this theorem occupies the rest of this section.  It is organized as follows.  \S\ref{subsec_glue_01} is devoted to construct approximate eigensections of $(Y,D_r)$ corresponding to $\#\CI_j - \#\ck{\CI}_j$.  In \S\ref{subsec_glue_02} we prove that these approximate eigensections have small $L^2$-norm near the bindings, and hence are approximate eigensections of $(\ex{Y},\ex{D}_r)$.  \S\ref{subsec_glue_03} contains a linear algebra lemma which gives a precise estimate on the difference between genuine eigenvalues and approximate eigenvalues.  In \S\ref{subsec_glue_04} we combine the above results to prove that $(\#\CI_j - \#\ck{\CI}_j) + \#\hat{\CI}_j \leq \#\ex{\CI}_j$.  Another direction, $(\#\ex{\CI}_j - \#\hat{\CI}_j) + \#\ck{\CI}_j \leq \#\CI_j$, can be proved by the same argument.

\subsection{Approximate eigensections for $\#\CI_j - \#\ck{\CI}_j$}\label{subsec_glue_01}
The first step is to construct approximate eigensections of $D_r$ corresponding to $\#\CI_j - \#\ck{\CI}_j$.

\begin{lem}\label{lem_glue_02}
There exist constants $c_9>c_1$ and $c_{10}$ determined by the contact form $a$, the metric $\dd s^2$ and the connection $\aE$ with the following significance.  For any $r\geq c_9$, let $\{\nu_j : -[\oh r^\oh]<j<[\oh r^\oh]\}$ be the sequence given by Lemma \ref{lem_glue_01}.  Let
\begin{align*}
\CV_j &= \spn\{\psi \,|\, D_r\psi=\lambda\psi\text{ for some }\lambda\in\CI_j\}\qquad \text{and}  \\
\ck{\CV}_j &= \spn\{ \ck{\varphi}_{k,m}^{\appr} \,|\, (k,m)\in\ck{\CI}_j\}
\end{align*}
where $\ck{\varphi}^{\appr}_{k,m}$ is the  approximate eigensection given by Proposition \ref{prop_Dirac_ck_01}.  Since the elements of $\ck{\CV}_j$ only support on $\{\rho\leq1\}$, they can be regarded as smooth sections of $E\oplus EK^{-1}\to Y$ by (\ref{eqn_trivial_ck_01}) and Remark \ref{rmk_identification_01}.  Then, $\CV_j$ and $\ck{\CV}_j$ satisfy the following properties for any $-[\oh r^\oh]<j<[\oh r^\oh]-1$.
\begin{enumerate}
\item Let $\pr_j$ be the $L^2$-orthogonal projection onto $\CV_j$.  The dimension of $\pr_j(\ck{\CV}_j)$ is the same as the dimension of $\ck{\CV}_j$, i.e.\ $\pr_j:\ck{\CV}_j\to{\CV}_j$ is injective.
\item Let $\ck{\CV}_j^\prp$ be the $L^2$-orthogonal complement of $\pr_j(\ck{\CV}_j)$ in $\CV_j$.  The space $\ck{\CV}_j^\prp$ admits a $L^2$-orthonormal basis $\{\psi_{j,\ell}\}_{1\leq\ell\leq\#\CI_j - \#\ck{\CI}_j}$ such that
\begin{align*}
D_r\psi_{j,\ell} = \mu_{j,\ell}\psi_{j,\ell} + {\psi}_{j,\ell}^{\err}
\end{align*}
for some scalar $\mu_{j,\ell}\in\BR$ and $\psi^{\err}_{j,\ell}\in\pr_j(\ck{\CV}_j)$.  They obey the estimate:
\begin{align*}
\nu_j+c_2r^{-\frac{3}{2}}<\mu_{j,\ell}<\nu_{j+1}-c_2r^{-\frac{3}{2}} ~, \text{ and }~ \int_Y|\psi^{\err}_{j,\ell}|^2\leq c_{10}r^{-6}
\end{align*}    for any $1\leq\ell\leq\#\CI_j - \#\ck{\CI}_j$.
\item Moreover, let $\pr_j^{-1}:\pr_j(\ck{\CV}_j)\to\ck{\CV}_j$ be the inverse map of that in item \textup{(i)}.  Then
\begin{align*}
\int_Y|\pr_j^{-1}(\psi^{\err}_{j,\ell}) - \psi^{\err}_{j,\ell}|^2 &\leq c_{10}r^{-8} ~.
\end{align*}
\end{enumerate}
\end{lem}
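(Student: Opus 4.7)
The plan is to construct the sections $\{\psi_{j,\ell}\}$ by first embedding $\ck{\CV}_j$ into $\CV_j$ via the spectral projection $\pr_j$, then diagonalising the compression of $D_r$ on the $L^2$-orthogonal complement of $\pr_j(\ck{\CV}_j)$ inside $\CV_j$. The key technical ingredient is the mutual $L^2$-orthogonality of the Fourier sectors $\CS_{k,m}$.

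\emph{Setup.} For $(k,m)\in\ck{\CI}_j$ the condition $k<mV$ forces $\ck{\rho}_{k,m}<1$; hence by Proposition \ref{prop_Dirac_ck_01}(ii)(a) the section $\ck{\varphi}^{\appr}_{k,m}$ is supported in $\{\rho\le 1\}\subset S^1\times B\subset Y$, the region where $D_r$ is identified with $\ck{D}_r$ via Remark \ref{rmk_identification_01}. Set $\epsilon_{k,m}:=(D_r-\lambda_{k,m})\ck{\varphi}^{\appr}_{k,m}$, where $\lambda_{k,m}$ denotes the eigenvalue of $\ck{D}_r$ on $\CS_{k,m}$; then Proposition \ref{prop_Dirac_ck_01}(ii)(c) gives $\|\epsilon_{k,m}\|_{L^2(Y)}^2\le c\,r^{-6}$. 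By Definition \ref{defn_glue_01} and Lemma \ref{lem_glue_01}(ii)(iii), $\lambda_{k,m}\in[\nu_j+c_2r^{-1},\nu_{j+1}-c_2r^{-1}]$ and every eigenvalue of $D_r$ outside $(\nu_j,\nu_{j+1})$ is separated from $\lambda_{k,m}$ by at least $c_2r^{-1}$. Expanding $\ck{\varphi}^{\appr}_{k,m}$ in the eigenbasis of $D_r$ yields
\begin{align*}
\|(I-\pr_j)\ck{\varphi}^{\appr}_{k,m}\|^2 \le (c_2r^{-1})^{-2}\|\epsilon_{k,m}\|^2 \le c\,r^{-4}.
\end{align*}
Since distinct $\ck{\varphi}^{\appr}_{k,m}$ lie in $L^2$-orthogonal Fourier sectors $\CS_{k,m}$, each with norm $1+O(r^{-7})$, the Gram matrix $G_{(k,m),(k',m')}:=\langle \pr_j\ck{\varphi}^{\appr}_{k,m},\pr_j\ck{\varphi}^{\appr}_{k',m'}\rangle$ has diagonal $1+O(r^{-4})$ and off-diagonal $O(r^{-2})$. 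Combined with $\#\ck{\CI}_j\le c\,r$ from \eqref{eqn_glue_14}, a row-sum bound forces $\|G-I\|_{\mathrm{op}}\le c\,r^{-1}$, so $\pr_j$ is injective on $\ck{\CV}_j$. This proves (i).

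\emph{Compression.} Let $P_j:\CV_j\to\ck{\CV}_j^\prp$ be the $L^2$-orthogonal projection. Since $\CV_j$ is finite-dimensional and $D_r$-invariant, $P_jD_rP_j:\ck{\CV}_j^\prp\to\ck{\CV}_j^\prp$ is self-adjoint; diagonalise it to obtain an orthonormal basis $\{\psi_{j,\ell}\}$ with eigenvalues $\mu_{j,\ell}$. Because the spectrum of $D_r|_{\CV_j}$ lies in $[\nu_j+c_2r^{-3/2},\nu_{j+1}-c_2r^{-3/2}]$ by Lemma \ref{lem_glue_01}(iii), so do the $\mu_{j,\ell}$. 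Setting $\psi^{\err}_{j,\ell}:=D_r\psi_{j,\ell}-\mu_{j,\ell}\psi_{j,\ell}=(I-P_j)D_r\psi_{j,\ell}$, the identity $\mathrm{Im}(I-P_j)=\pr_j(\ck{\CV}_j)$ in $\CV_j$ gives $\psi^{\err}_{j,\ell}\in\pr_j(\ck{\CV}_j)$. Using $\pr_j\psi_{j,\ell}=\psi_{j,\ell}$, $\psi_{j,\ell}\perp\ck{\CV}_j$, self-adjointness of $D_r$, and $\pr_jD_r=D_r\pr_j$, a direct computation gives
\begin{align*}
\langle\psi^{\err}_{j,\ell},\pr_j\ck{\varphi}^{\appr}_{k,m}\rangle=\langle\psi_{j,\ell},\epsilon_{k,m}\rangle.
\end{align*}
Because $D_r$ preserves each $\CS_{k,m}$ on the support region, $\epsilon_{k,m}\in\CS_{k,m}$, so $\{\epsilon_{k,m}\}_{(k,m)\in\ck{\CI}_j}$ is an $L^2$-orthogonal family in $Y$. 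Expressing $\psi^{\err}_{j,\ell}$ via the dual of the near-orthonormal basis $\{\pr_j\ck{\varphi}^{\appr}_{k,m}\}$ (controlled by $\|G^{-1}\|_{\mathrm{op}}\le 1+c\,r^{-1}$),
\begin{align*}
\|\psi^{\err}_{j,\ell}\|^2 \le (1+c\,r^{-1})\sum_{(k,m)\in\ck{\CI}_j}|\langle\psi_{j,\ell},\epsilon_{k,m}\rangle|^2 \le (1+c\,r^{-1})\max_{(k,m)}\|\epsilon_{k,m}\|^2 \le c_{10}\,r^{-6},
\end{align*}
where the second inequality follows from Bessel's inequality applied to the unit vectors $\epsilon_{k,m}/\|\epsilon_{k,m}\|$. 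This yields (ii). For (iii), write $\psi^{\err}_{j,\ell}=\pr_j\eta$ with $\eta=\pr_j^{-1}(\psi^{\err}_{j,\ell})=\sum c_{k,m}\ck{\varphi}^{\appr}_{k,m}\in\ck{\CV}_j$; then $\pr_j^{-1}(\psi^{\err}_{j,\ell})-\psi^{\err}_{j,\ell}=(I-\pr_j)\eta$, and Cauchy--Schwarz with the individual bound above gives
\begin{align*}
\|(I-\pr_j)\eta\|^2 \le (\#\ck{\CI}_j)\bigl(\max_{(k,m)}\|(I-\pr_j)\ck{\varphi}^{\appr}_{k,m}\|^2\bigr)\|\eta\|^2 \le c\,r\cdot r^{-4}\cdot r^{-6},
\end{align*}
since $\|\eta\|^2\le(1+c\,r^{-1})\|\psi^{\err}_{j,\ell}\|^2\le c\,r^{-6}$ by the Gram-matrix estimate.

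The main obstacle is obtaining the $r^{-6}$ bound in (ii). A naive estimate summing over $(k,m)\in\ck{\CI}_j$ would lose a factor of $r$ via $\#\ck{\CI}_j\lesssim r$ and yield only $O(r^{-5})$; it is precisely the orthogonality of the Fourier sectors $\CS_{k,m}$, making $\{\epsilon_{k,m}\}$ an orthogonal family, that lets Bessel's inequality absorb this factor and deliver the sharp power.
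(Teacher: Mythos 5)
Your proof follows essentially the same plan as the paper: estimate $(I-\pr_j)\ck{\varphi}^{\appr}_{k,m}$ via the spectral gap, show the Gram matrix is close to the identity (hence $\pr_j$ is injective on $\ck{\CV}_j$), diagonalise the compression of $D_r$ on $\ck{\CV}_j^\prp$, use the identity $\langle\psi^{\err}_{j,\ell},\pr_j\ck{\varphi}^{\appr}_{k,m}\rangle=\langle\psi_{j,\ell},\epsilon_{k,m}\rangle$ (integration by parts and orthogonality of $\psi_{j,\ell}$ to both $\pr_j(\ck{\CV}_j)$ and $\ck{\CV}_j$), and then exploit the $L^2$-orthogonality of the $\{\epsilon_{k,m}\}$ across Fourier sectors so that a Bessel-type sum absorbs the $\#\ck{\CI}_j\lesssim r$ factor and delivers $r^{-6}$. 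That last point is exactly the paper's mechanism (it uses the projections $\pr_{k,m}$ onto $\CS_{k,m}$ rather than saying ``Bessel,'' but the inequality is the same), and you have correctly identified it as the crux. One small slip: from $\|(I-\pr_j)\ck{\varphi}^{\appr}_{k,m}\|^2\le c\,r^{-4}$ the off-diagonal entries of $G$ satisfy $|G_{(k,m),(k',m')}|\le\|(I-\pr_j)\ck{\varphi}^{\appr}_{k,m}\|\,\|(I-\pr_j)\ck{\varphi}^{\appr}_{k',m'}\|\le c\,r^{-4}$, not $O(r^{-2})$; your weaker bound is in the conservative direction and does not affect the conclusion. (You also use the observation that $\lambda_{k,m}$ is at distance $\ge c_2r^{-1}$ from both $\nu_j$ and $\nu_{j+1}$, which is sharper than the paper's invocation of the $c_2r^{-3/2}$ gap for $D_r$ and gives a cleaner $r^{-4}$; the paper gets $r^{-3}$ for the same quantity, and both suffice.)
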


\begin{proof}
(\emph{Assertion} (i): $\pr(\ck{\CV}_j)$)\;
For any $(k,m)\in\ck{\CI}_j$, let $\lambda_{k,m}$ be the corresponding eigenvalue of $\ck{D}_r$ on $\ck{\varphi}_{k,m}$.  According to Lemma \ref{lem_glue_01}(ii), $\nu_j+c_2r^{-1}<\lambda_{k,m}<\nu_{j+1}-c_2r^{-1}$.  Due to Remark \ref{rmk_identification_01} and (\ref{eqn_trivial_ck_01}), $D_r\ck{\varphi}_{k,m}^{\appr}$ on $Y$ is the same as $\ck{D}_r\ck{\varphi}_{k,m}^{\appr}$ on $\ck{S}$.  By Proposition \ref{prop_Dirac_ck_01}(ii.b) and (ii.c), there exists a constant $c_{11}$ such that
\begin{align}\label{eqn_glue_01}
\int_Y|D_r\ck{\varphi}^{\appr}_{k,m} - \lambda_{k,m}\ck{\varphi}^{\appr}_{k,m}|^2 &\leq c_{11}r^{-6}  ~,
&\text{and}& &\big|1 - \int_Y|\ck{\varphi}^{\appr}_{k,m}|^2\big| &\leq c_{11}r^{-7}  ~.
\end{align}

In terms of the spectral decomposition induced by $D_r$, write $\ck{\varphi}^{\appr}_{k,m}$ as
\begin{align}\label{eqn_glue_02}
\ck{\varphi}^{\appr}_{k,m} &= \pr_j(\ck{\varphi}^{\appr}_{k,m}) +  \ck{\varphi}^{+}_{k,m} + \ck{\varphi}^{-}_{k,m}
\end{align}
where $\pr_j(\ck{\varphi}^{\appr}_{k,m})$ is the $L^2$-orthogonal projection of $\ck{\varphi}^{\appr}_{k,m}$ onto $\CV_j$, $\ck{\varphi}^{+}_{k,m}$ is the $L^2$-orthogonal projection of $\ck{\varphi}^{\appr}_{k,m}$ onto the space spanned by eigensections whose eigenvalue is greater than $\nu_{j+1}$, and $\ck{\varphi}^{-}_{k,m}$ is the $L^2$-orthogonal projection of $\ck{\varphi}^{\appr}_{k,m}$ onto the space spanned by eigensections whose eigenvalue is less than $\nu_{j}$.  It follows from Lemma \ref{lem_glue_01}(iii) that
\begin{align*}
c_2r^{-\frac{3}{2}}\int_Y|\ck{\varphi}^{+}_{k,m}|^2 &\leq \int_Y\langle (D_r-\lambda_{k,m})\ck{\varphi}^{\appr}_{k,m}, \ck{\varphi}^{+}_{k,m}\rangle  \\
&\leq \frac{1}{2c_2}r^{\frac{3}{2}}\int_Y |(D_r-\lambda_{k,m})\ck{\varphi}^{\appr}_{k,m}|^2 + \oh c_2r^{-\frac{3}{2}}\int_Y|\ck{\varphi}^{+}_{k,m}|^2 ~.
\end{align*}
Then appeal to (\ref{eqn_glue_01}) to conclude that
\begin{align}\label{eqn_glue_03}
\int_Y|\ck{\varphi}^{+}_{k,m}|^2 &\leq \frac{c_{11}}{(c_2)^2} r^{-3} ~.
\end{align}
Similarly, $\int_Y|\ck{\varphi}^{-}_{k,m}|^2$ has the same upper bound.

Proposition \ref{prop_Dirac_ck_01}(ii) implies that $\{\ck{\varphi}^{\appr}_{k,m}\,|\,(k,m)\in\ck{\CI}_j\}$ are mutually orthogonal to each other with respect to the $L^2$-inner product.  It together with (\ref{eqn_glue_02}) and (\ref{eqn_glue_03}) finds a constant $c_{12}$ so that
\begin{align}\label{eqn_glue_04}\begin{split}
\big| 1- \int_Y|\pr(\ck{\varphi}^{\appr}_{k,m})|^2 \big| &\leq c_{12}r^{-3} ~, \\
\big|\int_Y\langle\pr(\ck{\varphi}^{\appr}_{k,m}),\pr(\ck{\varphi}^{\appr}_{k',m'})\rangle\big|
&= \sum_{+,-}\big|\int_Y\langle\ck{\varphi}^{\pm}_{k,m},\ck{\varphi}^{\pm}_{k',m'}\rangle\big|
\leq c_{12}r^{-3}
\end{split}\end{align}
for any $(k,m),(k',m')\in\ck{\CI}_j$ and $(k,m)\neq(k',m')$.  On the other hand, the dimension of $\ck{\CV}_j$ is no greater than $c_7 r$ (\ref{eqn_glue_14}).  Based on these facts, a linear algebra argument shows that $\{\pr_j(\ck{\varphi}^{\appr}_{k,m})\,|\,(k,m)\in\ck{\CI}_j\}$ still forms a linearly independent set.  This proves Assertion (i) of the lemma.

\smallskip
(\emph{Assertion} (ii): $\ck{\CV}_j^\prp$)\;
It follows from the construction that $D_r(\CV_j)\subset\CV_j$.  Let $D_{r,j}^\prp:\ck{\CV}_j^\prp\to\ck{\CV}_j^\prp$ be the restriction of $D_r$ on $\ck{\CV}_j^\prp$ composing with the $L^2$-orthogonal projection onto $\ck{\CV}_j^\prp$.  The $L^2$ self-adjointness of $D_r$ implies the $L^2$ self-adjointness of $D_{r,j}^\prp$.  The construction of $D_{r,j}^\prp$ is done within $\CV_j$, and it is simply a finite-dimensional linear algebra.  Hence, there exists a $L^2$-orthonormal eigenbasis $\{\psi_{j,\ell}\}_\ell$ of $D_{r,j}^\prp$ on $\ck{\CV}_j^\prp$.  Denote the corresponding eigenvalue by $\mu_{j,\ell}$.  By (\ref{eqn_glue_02}), $\psi_{j,\ell}\in\ck{\CV}_j^\prp\subset\CV_j$ is not only $L^2$-orthogonal to $\pr(\ck{\varphi}^{\appr}_{k,m})$, but also $L^2$-orthogonal to $\ck{\varphi}^{\appr}_{k,m}$.

Let $\psi_{j,\ell}^{\err} = D_r\psi_{j,\ell} - \mu_{j,\ell}\psi_{j,\ell}$.  It follows from the construction that any $\psi_{j,\ell}^{\err}$ belongs to the $L^2$-orthogonal complement of $\ck{\CV}_j^\prp$ in $\CV_j$, which is $\pr_j(\ck{\CV}_j)$.  To show that $\nu_j+c_2r^{-\frac{3}{2}}<\mu_{j,\ell}<\nu_{j+1}-c_2r^{-\frac{3}{2}}$, express $\mu_{j,\ell}$ as $\int_Y\langle D_r\psi_{j,\ell},\psi_{j,\ell}\rangle$.  An elementary linear algebra argument shows that
\begin{align*}
\inf \CI_j \leq \sup_{\psi\in\CV_j\backslash\{0\}}\frac{\int_Y\langle D_r\psi,\psi\rangle}{\int_Y|\psi|^2} \leq \sup \CI_j ~,
\end{align*}
and the desired bound on $\mu_{j,\ell}$ follows from Lemma \ref{lem_glue_01}(ii).

\smallskip
It remains to estimate the $L^2$-norm of $\psi_{j,\ell}^{\err}$.  Suppose that $\sum_{\ck{\CI}_j}\fc_{k,m}\pr_j(\ck{\varphi}^{\appr}_{k,m})$ is a smooth section in $\pr_j(\ck{\CV}_j)$ with unit $L^2$-norm.  Due to (\ref{eqn_glue_01}), (\ref{eqn_glue_02}), (\ref{eqn_glue_03}), (\ref{eqn_glue_14}) and (\ref{eqn_glue_04}),
\begin{align*}
1 &= \sum_{\ck{\CI}_j}|\fc_{k,m}|^2\int_Y|\pr_j(\ck{\varphi}^{\appr}_{k,m})|^2 - \sum_{\scriptscriptstyle(k,m)\in\ck{\CI}_j}\sum_{(k',m')\atop\neq(k,m)}\fc_{k,m}\bar{\fc}_{k',m'}\int_Y\langle\pr_j(\ck{\varphi}^{\appr}_{k,m}),\pr_j(\ck{\varphi}^{\appr}_{k',m'})\rangle \\
&\geq \sum_{\ck{\CI}_j}|\fc_{k,m}|^2\int_Y|\pr_j(\ck{\varphi}^{\appr}_{k,m})|^2 - \oh\sum_{\scriptscriptstyle(k,m)\in\ck{\CI}_j}\sum_{(k',m')\atop\neq(k,m)}(|\fc_{k,m}|^2+|{\fc}_{k',m'}|^2)\Big|\int_Y\langle\pr_j(\ck{\varphi}^{\appr}_{k,m}),\pr_j(\ck{\varphi}^{\appr}_{k',m'})\rangle\Big| \\
&\geq \sum_{\ck{\CI}_j}|\fc_{k,m}|^2\int_Y|\pr_j(\ck{\varphi}^{\appr}_{k,m})|^2 - \sum_{\scriptscriptstyle(k,m)\in\ck{\CI}_j}\Big(|\fc_{k,m}|^2\sum_{(k',m')\atop\neq(k,m)}\big|\int_Y\langle\pr_j(\ck{\varphi}^{\appr}_{k,m}),\pr_j(\ck{\varphi}^{\appr}_{k',m'})\rangle\big|\Big) \\
&\geq (1 - c_{13}r^{-2})\sum_{\ck{\CI}_j}|\fc_{k,m}|^2
\end{align*}
for some constant $c_{13}>0$.  It follows that
\begin{align}\label{eqn_glue_05}
\sum_{\ck{\CI}_j}|\fc_{k,m}|^2 &\leq 1 + 2c_{13}r^{-2} ~.
\end{align}
Consider the following $L^2$-inner product:
\begin{align*}
\big|\int_Y\langle\psi^{\err}_{j,\ell},\sum_{\ck{\CI}_j}\fc_{k,m}\pr_j(\varphi^{\appr}_{k,m})\rangle\big| &\leq \sum_{\ck{\CI}_j}|\fc_{k,m}|\big|\int_Y\langle D_r\psi_{j,\ell} - \mu_{j,\ell}\psi_{j,\ell},\pr_j(\varphi^{\appr}_{k,m})\rangle\big| \\
&= \sum_{\ck{\CI}_j}|\fc_{k,m}|\big|\int_Y\langle D_r\psi_{j,\ell},\varphi^{\appr}_{k,m}\rangle\big| = \sum_{\ck{\CI}_j}|\fc_{k,m}|\big|\int_Y\langle \psi_{j,\ell},D_r\varphi^{\appr}_{k,m}\rangle\big| \\
&= \sum_{\ck{\CI}_j}|\fc_{k,m}|\big|\int_Y\langle \psi_{j,\ell},(D_r-\lambda_{k,m})\varphi^{\appr}_{k,m}\rangle\big| ~.
\end{align*}
By (\ref{eqn_glue_01}) and (\ref{eqn_glue_05}),
\begin{align*}
\big|\int_Y\langle\psi^{\err}_{j,\ell},\sum_{\ck{\CI}_j}\fc_{k,m}\pr_j(\varphi^{\appr}_{k,m})\rangle\big|^2 &\leq
\big(\sum_{\ck{\CI}_j}|\fc_{k,m}|^2\big) c_{11}r^{-6} \big(\sum_{\ck{\CI}_j}\int_{\rho\leq 1}|\pr_{k,m}(\psi_{j,\ell})|^2\big) \\
&\leq c_{11} r^{-6} (1+2c_{13}r^{-2}) \int_Y |\psi_{j,\ell}|^2 ~.
\end{align*}
Here, $\psi_{j,\ell}|_{\rho\leq1}$ is regarded as a local section of $\underline{\BC}\oplus\ck{K}^{-1}\to\ck{S}$ by Remark \ref{rmk_identification_01}, and $\pr_{k,m}$ is the projection onto $\CS_{k,m}$ defined in \S\ref{sec_ck_Dirac_component}.  Since the estimate holds for \emph{any} unit-normed section in $\pr_j(\ck{\CV}_j)$, we conclude that $\int_Y|\psi^{\err}_{j,\ell}|^2\leq 2c_{11}r^{-6}$.

\smallskip
(\emph{Assertion} (iii): $\pr^{-1}(\psi^{\err}_{j,\ell})$)\;
Proposition \ref{prop_Dirac_ck_01}(ii.b) and (\ref{eqn_glue_05}) imply that
$$   \int_Y|\pr_j^{-1}(\zeta)|^2\leq (1+2c_{13}r^{-2})\int_Y|\zeta|^2   $$
for any $\zeta\in\pr_j(\ck{\CV}_j)$.  Since $\int_Y |\zeta|^2 = \int_Y\langle\pr^{-1}(\zeta),\zeta\rangle$ for any $\zeta\in\pr_j(\ck{\CV}_j)$,
\begin{align*}
\int_Y|\pr_j^{-1}(\psi^{\err}_{j,\ell})-\psi^{\err}_{j,\ell}|^2 & = \int_Y|\pr_j^{-1}(\psi^{\err}_{j,\ell})|^2 - |\psi^{\err}_{j,\ell}|^2 \leq 2c_{13}r^{-2}\int_Y|\psi^{\err}_{j,\ell}|^2 \leq 4c_{11}c_{13}r^{-8} ~.
\end{align*}
This completes the proof of the lemma.
\end{proof}

\subsection{Almost vanishing near the bindings}\label{subsec_glue_02}
The main purpose of this subsection is to prove that the approximate eigensections constructed by Lemma \ref{lem_glue_02} have small $L^2$-integral near the bindings.

\begin{lem}\label{lem_glue_03}
There exists a constant $c_{15}$ determined by the contact form $a$, the metric $\dd s^2$ and the connection $\aE$ such that
\begin{align*}
\int_{\rho\leq\rho_0-5\delta}|\psi_{j,\ell}|^2 &\leq c_{15}(r^{-6} + r^{-1}\int_{\rho\leq\rho_0}|\psi_{j,\ell}|^2)
\end{align*}
for any $\psi_{j,\ell}$ produced by Lemma \ref{lem_glue_02}(ii) and $\rho_0\in[\oh,1]$.  ($\rho$ is the coordinate near the bindings as in \S\ref{subsec_open_book}.)
\end{lem}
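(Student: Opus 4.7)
The plan is to localize $\psi_{j,\ell}$ near the bindings, regard the localization as a section on the local model $\ck{S}$, and exploit the orthogonality properties of $\psi_{j,\ell}$ combined with spectral bounds for $\ck{D}_r$.  I would choose $\chi\in\CC^\infty([0,2])$ with $\chi\equiv 1$ on $\{\rho\leq\rho_0-5\delta\}$, $\chi\equiv 0$ on $\{\rho\geq\rho_0\}$, and $|\chi'|$ uniformly bounded in $r$.  Using the identification of sections of $(E\oplus EK^{-1})|_{S^1\times B}$ with sections of $\underline{\BC}\oplus\ck{K}^{-1}$ on $\ck{S}|_{\{\rho<1+50\delta\}}$ from \S\ref{subsec_local1}, I view $\phi:=\chi\psi_{j,\ell}$ as a compactly supported section on $\ck{S}$.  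Since $D_r$ agrees with $\ck{D}_r$ under this identification, Lemma \ref{lem_glue_02}(ii) yields
\begin{align*}
(\ck{D}_r-\mu_{j,\ell})\phi=\chi\psi_{j,\ell}^{\err}+\cl(\dd\chi)\psi_{j,\ell},
\end{align*}
and hence $\|(\ck{D}_r-\mu_{j,\ell})\phi\|_{L^2(\ck{S})}^2\leq C\bigl(r^{-6}+\int_{\rho\leq\rho_0}|\psi_{j,\ell}|^2\bigr)$.

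Next I decompose $\phi=\phi_L+\phi_S$ via the spectral decomposition of $\ck{D}_r$, with $\phi_L$ the projection onto eigensections whose eigenvalue lies outside $(-(\ot r)^\oh,(\ot r)^\oh)$.  Since $|\mu_{j,\ell}|\leq\oh r^\oh$, the spectral gap $|\lambda-\mu_{j,\ell}|\geq c_0 r^\oh$ on $\phi_L$ immediately gives $\|\phi_L\|_{L^2(\ck{S})}^2\leq Cr^{-1}\bigl(r^{-6}+\int_{\rho\leq\rho_0}|\psi_{j,\ell}|^2\bigr)$, matching the claimed bound.  By Proposition \ref{prop_Dirac_ck_01}(ii) I write $\phi_S=\sum_{(k,m)}\fe_{k,m}\ck{\varphi}_{k,m}$, and estimate $\fe_{k,m}=\int_{\ck{S}}\langle\phi,\ck{\varphi}_{k,m}\rangle$ case by case.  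When $\ck{\rho}_{k,m}>\rho_0+2\delta$, the section $\ck{\varphi}^{\appr}_{k,m}$ is supported outside $\{\rho\leq\rho_0\}$, so Proposition \ref{prop_Dirac_ck_01}(ii.b) gives $\|\ck{\varphi}_{k,m}\|_{L^2(\rho\leq\rho_0)}\leq Cr^{-7/2}$ and hence $|\fe_{k,m}|^2\leq Cr^{-7}\int_{\rho\leq\rho_0}|\psi_{j,\ell}|^2$; counting such indices as $O(r^{3/2})$ makes their total contribution negligible.  When $(k,m)\in\ck{\CI}_j$, the orthogonality $\int_Y\langle\psi_{j,\ell},\ck{\varphi}^{\appr}_{k,m}\rangle=0$ (automatic from the construction of $\psi_{j,\ell}\in\ck{\CV}_j^\prp\subset\CV_j$ in Lemma \ref{lem_glue_02}) combined with Proposition \ref{prop_Dirac_ck_01}(ii.b) controls $|\fe_{k,m}|$ up to a cutoff-transition contribution.

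The delicate case, and the principal obstacle, is $(k,m)\in\ck{\CI}_{j'}$ with $j'\neq j$ and $\ck{\rho}_{k,m}\leq\rho_0+2\delta$: here the $\ck{D}_r$-spectral gap is only $O(r^{-1})$, which, if used naively, would lose a factor of $r^2$.  To recover the estimate I would invoke the analogue of Lemma \ref{lem_glue_02}(i) at level $j'$: the projection $\pr_{j'}(\ck{\varphi}^{\appr}_{k,m})\in\CV_{j'}$ is $L^2$-orthogonal to $\psi_{j,\ell}\in\CV_j$, while the discarded tail has $L^2$-norm $O(r^{-3/2})$ by the analogue of \eqref{eqn_glue_03}.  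This yields the approximate orthogonality $|\int_Y\langle\psi_{j,\ell},\ck{\varphi}^{\appr}_{k,m}\rangle|\leq Cr^{-3/2}$.  The remaining technical task is to absorb both the cutoff-transition contributions (arising where the support of $\ck{\varphi}^{\appr}_{k,m}$ protrudes past $\{\rho\leq\rho_0-5\delta\}$) and the summed squared orthogonality defects $O(r^{-3})$ — totalling at most $O(r\delta)$ border indices across the three relevant levels $j-1,j,j+1$ — into the asserted $r^{-1}\int_{\rho\leq\rho_0}|\psi_{j,\ell}|^2$, using the quantitative support information from Proposition \ref{prop_Dirac_ck_01}(ii.a) to avoid losing factors of $r$.
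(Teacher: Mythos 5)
Your overall scaffolding — cut off near the bindings, identify the localization with a section on $\ck{S}$ where $D_r=\ck{D}_r$, split via the $\ck{D}_r$-spectral decomposition into a high-frequency tail (estimated by the $O(r^{1/2})$ gap at $\pm(\ot r)^{1/2}$) plus a sum over $\ck{\varphi}_{k,m}$, and exploit disjointness of supports for those $\ck{\varphi}_{k,m}^{\appr}$ with $\ck{\rho}_{k,m}$ to the right of the cutoff region — is exactly what the paper does.  The gap is in your treatment of the "delicate case" $(k,m)\in\ck{\CI}_{j'}$, $j'\neq j$, with $\ck{\rho}_{k,m}$ to the left of the cutoff.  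Your proposed resolution via approximate orthogonality of $\psi_{j,\ell}\in\CV_j$ to $\pr_{j'}(\ck{\varphi}^{\appr}_{k,m})\in\CV_{j'}$ yields only $\bigl|\int_Y\langle\psi_{j,\ell},\ck{\varphi}^{\appr}_{k,m}\rangle\bigr|\leq Cr^{-3/2}$, because the discarded tail $\ck{\varphi}^{\appr}_{k,m}-\pr_{j'}(\ck{\varphi}^{\appr}_{k,m})$ has $L^2$-norm $O(r^{-3/2})$ by the analogue of \eqref{eqn_glue_03} (the gap of $(Y,D_r)$ at the $\nu_{j'}$ is only $O(r^{-3/2})$, from Lemma~\ref{lem_glue_01}(iii)).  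Squaring gives $|\fe_{k,m}|^2\leq Cr^{-3}$ per index, and you miscount the relevant indices: the spectral decomposition of $\chi\psi_{j,\ell}$ involves \emph{all} $(k,m)$ with $|\lambda_{k,m}|^2\leq\tfrac{r}{3}$, i.e.\ $O(r^{3/2})$ pairs, not the $O(r)$ you estimate from $j-1,j,j+1$.  The total is therefore $O(r^{-3/2})$, which misses the claimed $r^{-6}$ floor by a wide margin.  The floor matters: Corollary~\ref{cor_glue_01} iterates this lemma, and an $r^{-3/2}$ floor would propagate and break the later choice $\epsilon_2=c_{40}r^{-\frac{5}{2}}$ in Proposition~\ref{prop_glue_05}.

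The missing ingredient is Lemma~\ref{lem_glue_02}(iii), which you do not invoke.  The paper handles the case $(k,m)\notin\ck{\CI}_j$, $\ck{\rho}_{k,m}\leq\rho_0-3\delta$ by the naive spectral gap $|\mu_{j,\ell}-\lambda_{k,m}|\geq c_2 r^{-1}$, accepting the $r^2$ loss and transferring the estimate onto the frequency-$(k,m)$ component of $\chi_{\rho_0}\psi_{j,\ell}^{\err}$ (plus a harmless $r^{-4}\int_{\rho\leq\rho_0}|\psi_{j,\ell}|^2$ piece).  The key is then that $\psi^{\err}_{j,\ell}\in\pr_j(\ck{\CV}_j)$ is, by Lemma~\ref{lem_glue_02}(iii), within $O(r^{-4})$ in $L^2$ of $\pr_j^{-1}(\psi^{\err}_{j,\ell})$, which lies \emph{exactly} in $\oplus_{(k,m)\in\ck{\CI}_j}\CS_{k,m}$.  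Since $\chi_{\rho_0}(\rho)$ commutes with the Fourier decomposition in $(\theta,t)$, the aggregate off-$\ck{\CI}_j$ frequency content $\sum_{(k,m)\notin\ck{\CI}_j}\int|\pr_{k,m}(\chi_{\rho_0}\psi^{\err}_{j,\ell})|^2$ is bounded by $\int_Y|\chi_{\rho_0}\psi^{\err}_{j,\ell}-\chi_{\rho_0}\pr_j^{-1}(\psi^{\err}_{j,\ell})|^2\leq c\,r^{-8}$; multiplying by $r^2$ gives the $r^{-6}$.  This is a global $\ell^2$-over-frequencies estimate and is essentially exact, whereas your per-index orthogonality bound has no such cancellation to exploit.
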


\begin{proof}
Let $\chi(x)$ be a standard cut-off function with $\chi(x) = 1$ when $x\leq-1$, and $\chi(x) = 0$ when $x\geq0$.  Let $\chi_{\rho_0} = \chi((\rho-\rho_0)/\delta)$.  It can be regarded as a smooth function on $Y$ which is only non-zero near the bindings.

According to Remark \ref{rmk_identification_01} and (\ref{eqn_trivial_ck_01}), $\chi_{\rho_0}\psi_{j,\ell}$ may be regarded as a smooth section of $\underline{\BC}\oplus\ck{K}^{-1}\to\ck{S}$.  It follows from the construction of $(\ck{S},\ck{D}_r)$ that $D_r(\chi_{\rho_0}\psi_{j,\ell}) = \ck{D}_r(\chi_{\rho_0}\psi_{j,\ell})$ under the identification.

For brevity, drop the subscript $\ell$.  Express $\chi_{\rho_0}\psi_j$ in terms of the spectral decomposition induced by $\ck{D}_r$:
\begin{align}\label{eqn_glue_06}
\chi_{\rho_0}\psi_j &= \sum_{|\lambda_{k,m}|^2\leq\frac{r}{3}}\fc_{k,m}\ck{\varphi}_{k,m} + \ck{\zeta}_j^{\err} ~.
\end{align}
The first term is the $L^2$-orthogonal projection of $\chi_{\rho_0}\psi$ onto the space spanned by the eigensections whose eigenvalue $|\lambda_{k,m}|^2\leq\frac{1}{3}r$, and $\ck{\varphi}_{k,m}$'s are the eigensections given by Proposition \ref{prop_Dirac_ck_01}.  This same proposition guarantees that each $(k,m)$ appears at most once in the summation.  The remainder term $\ck{\zeta}_j^{\err}$ belongs to the $L^2$-orthogonal complement, which is spanned by eigensections whose eigenvalue $|\lambda|^2>\frac{1}{3}r$.

\smallskip
(\emph{Estimate $\ck{\zeta}^{\err}_j$})\;
Let $\mu_j$ be the approximate eigenvalue of $\psi_j$ given by Lemma \ref{lem_glue_02}(ii).  The operator $\ck{D}_r-\mu_j$ preserves the $L^2$-orthogonality between $\sum\fc_{k,m}\ck{\varphi}_{k,m}$ and $\ck{\zeta}^{\err}$.  Since $|\mu_j|<\frac{1}{2}r^\oh$, $|\mu_j \pm (\frac{1}{3}r)^\oh|\geq\frac{1}{16}r^\oh$.  It follows that
\begin{align*}
\frac{1}{256}r\int_{\ck{S}}|\ck{\zeta}^{\err}_j|^2 &\leq \int_{\ck{S}}|(\ck{D}_r-\mu_j)(\chi_{\rho_0}\psi_j)|^2
\leq 2\int_Y|(D_r-\mu_j)\psi_j|^2 + 2\int_Y |\cl(\dd\chi_{\rho_0})\psi_j|^2 ~,
\end{align*}
By Lemma \ref{lem_glue_02}(ii), there exists a constant $c_{16}$ such that
\begin{align}\label{eqn_glue_07}
\int_{\ck{S}}|\ck{\zeta}^{\err}_j|^2 &\leq c_{16}(r^{-7} + r^{-1}\int_{\rho\leq\rho_0}|\psi_j|^2) ~.
\end{align}

\smallskip
(\emph{Estimate $\fc_{k,m}$ for $(k,m)\in\ck{\CI}_j$ and $\ck{\rho}_{k,m}\leq\rho_0-3\delta$})\;
When $\ck{\rho}_{k,m}\leq\rho_0-3\delta$, Proposition \ref{prop_Dirac_ck_01}(ii.a) implies that $\chi_{\rho_0} = 1$ on the support of $\ck{\varphi}_{k,m}^{\appr}$, and thus $\chi_{\rho_0}\ck{\varphi}^{\appr}_{k,m} = \ck{\varphi}^{\appr}_{k,m}$.  The coefficient $\fc_{k,m}$ is equal to
\begin{align*}
\fc_{k,m} &= \int_{\ck{S}}\langle\chi_{\rho_0}\psi_j,\ck{\varphi}_{k,m}\rangle = \int_{Y}\langle\psi_j,\ck{\varphi}_{k,m}^{\appr}\rangle + \int_{\ck{S}}\langle\chi_{\rho_0}\psi_j,\ck{\varphi}_{k,m}^{\err}\rangle ~.
\end{align*}
Since $(k,m)\in\ck{\CI}_j$, it follows from Lemma \ref{lem_glue_02}(ii) that $\int_Y\langle\psi_j,\ck{\varphi}_{k,m}^{\appr}\rangle = 0$.  According to Proposition \ref{prop_Dirac_ck_01}(ii.b) and the Cauchy--Schwarz inequality, there exists a constant $c_{17}$ such that
\begin{align}\label{eqn_glue_08}
|\fc_{k,m}|^2 &\leq c_{17}r^{-7}\int_{\ck{S}}|\pr_{k,m}(\chi_{\rho_0}\psi_j)|^2
\end{align}
for any $(k,m)\in\ck{\CI}_j$ with $\ck{\rho}_{k,m}\leq\rho_0-3\delta$.  The map $\pr_{k,m}$ is the projection onto $\CS_{k,m}$ defined in \S\ref{sec_ck_Dirac_component}.

\smallskip
(\emph{Estimate $\fc_{k,m}$ for $(k,m)\notin\ck{\CI}_j$ and $\ck{\rho}_{k,m}\leq\rho_0-3\delta$})\;
Similar to the previous case, the coefficient $\fc_{k,m}$ is bounded by
\begin{align*}
|\fc_{k,m}|^2 &\leq 2\big|\int_Y\langle\psi_j,\ck{\varphi}^{\appr}_{k,m}\rangle\big|^2 + 2c_{17}r^{-7}\int_{\ck{S}}|\pr_{k,m}(\chi_{\rho_0}\psi_j)|^2 ~.
\end{align*}
Since $(k,m)\notin\ck{\CI}_j$, it follows from Lemma \ref{lem_glue_02}(ii) and Lemma \ref{lem_glue_01}(ii) that $|\mu_j-\lambda_{k,m}|\geq c_2r^{-1}$.  And then
\begin{align*}
c_2r^{-1}\big|\int_Y\langle\psi_j,\ck{\varphi}^{\appr}_{k,m}\rangle\big| &\leq \big|\int_Y\langle(D_r-\mu_j)\psi_j,\ck{\varphi}^{\appr}_{k,m}\rangle\big| + \big|\int_Y\langle\psi_j,(D_r-\lambda_{k,m})\ck{\varphi}^{\appr}_{k,m}\rangle\big|\\
&=\big|\int_Y\langle\psi^{\err}_j,\chi_{\rho_0}\ck{\varphi}^{\appr}_{k,m}\rangle\big| + \big|\int_Y\langle\psi_j,\chi_{\rho_0}(D_r-\lambda_{k,m})\ck{\varphi}^{\appr}_{k,m}\big| ~.
\end{align*}
Due to Proposition \ref{prop_Dirac_ck_01}(ii.c), there exists a constant $c_{18}$ such that
\begin{align}\label{eqn_glue_09}
|\fc_{k,m}|^2 &\leq c_{18}\big(r^2\int_{\ck{S}}|\pr_{k,m}(\chi_{\rho_0}\psi_j^{\err})|^2 + r^{-4}\int_{\ck{S}}|\pr_{k,m}(\chi_{\rho_0}\psi_j)|^2 \big)
\end{align}
for any $(k,m)\notin\ck{\CI}_j$ with $\ck{\rho}_{k,m}\leq\rho_0-3\delta$.

\smallskip
(\emph{Estimate $\fc_{k,m}$ for $\ck{\rho}_{k,m}>\rho_0-3\delta$})\;
It follows from the Cauchy--Schwarz inequality that
\begin{align}\label{eqn_glue_10}
|\fc_{k,m}|^2 &= \big|\int_{\ck{S}}\langle\chi_{\rho_0}\psi_j,\ck{\varphi}_{k,m}\rangle\big| \leq \int_{\ck{S}}|\pr_{k,m}(\chi_{\rho_0}\psi_j)|^2
\end{align}
for any $(k,m)$ with $\ck{\rho}_{k,m}>\rho_0-3\delta$.

\smallskip
(\emph{The integral of $\psi_j$ over $\rho\leq\rho_0-5\delta$})\;
Consider the $L^2$-integral of (\ref{eqn_glue_06}) on $\{\rho\leq\rho_0-5\delta\}$:
\begin{align}\label{eqn_glue_19}
\int_{\rho\leq\rho_0-5\delta}|\psi_j|^2 &\leq 2\int_{\ck{S}}|\ck{\zeta}^{\err}_j|^2 + 2\sum_{|\lambda_{k,m}|^2\leq\frac{r}{3}}|\fc_{k,m}|^2\big(\int_{\rho\leq\rho_0-5\delta}|\ck{\varphi}_{k,m}|^2\big)
\end{align}
where the inequality uses the fact that $\int_{\rho\leq\rho_0-5\delta}\langle\ck{\varphi}_{k,m},\ck{\varphi}_{k',m'}\rangle = 0$ for any $(k,m)\neq(k',m')$.  When $\ck{\rho}_{k,m}>\rho_0-3\delta$, Proposition \ref{prop_Dirac_ck_01}(ii.a) and (ii.b) imply that
\begin{align*}
\int_{\rho\leq\rho_0-5\delta}|\ck{\varphi}_{k,m}|^2 = \int_{\rho\leq\rho_0-5\delta}|\ck{\varphi}_{k,m}^{\err}|^2 \leq \int_{\ck{S}}|\ck{\varphi}^{\err}_{k,m}|^2 \leq c_{19}r^{-7} ~.
\end{align*}
By (\ref{eqn_glue_08}), (\ref{eqn_glue_09}) and (\ref{eqn_glue_10}), there exists a constant $c_{20}$ such that
\begin{align}
&\sum_{|\lambda_{k,m}|^2\leq\frac{r}{3}}|\fc_{k,m}|^2\big(\int_{\rho\leq\rho_0-5\delta}|\ck{\varphi}_{k,m}|^2\big) \notag\\
\leq&\, \sum_{\ck{\rho}_{k,m}\leq\rho_0-3\delta}|\fc_{k,m}|^2 + c_{19}r^{-6}\sum_{\ck{\rho}_{k,m}>\rho_0-3\delta}|\fc_{k,m}|^2 \notag\\
\leq&\, c_{20}\big( r^{-4}\int\sum_{(k,m)}|\pr_{k,m}(\chi_{\rho_0}\psi_j)|^2 + r^2\int\sum_{(k,m)\notin\ck{\CI}_j}|\pr_{k,m}(\chi_{\rho_0}\psi_j^{\err})|^2 \big) \notag\\
\leq&\, c_{20}\big( r^{-4}\int_{\rho\leq\rho_0}|\psi_j|^2 + r^2\int_{Y}\big|\chi_{\rho_0}\psi^{\err}_j - \chi_{\rho_0}\pr_j^{-1}(\psi_j^{\err})\big|^2 \big) ~. \label{eqn_glue_23}
\end{align}
where $\pr_j^{-1}$ is the map defined by Lemma \ref{lem_glue_02}(iii).  The last inequality follows from the fact that $\pr_j^{-1}(\psi_j)$ is contained in $\oplus_{(k,m)\in\ck{\CI}_j}\CS_{k,m}$, and so is $\chi_{\rho_0}\pr_j^{-1}(\psi_j)$.  According to  (\ref{eqn_glue_19}), (\ref{eqn_glue_07}), (\ref{eqn_glue_23}) and Lemma \ref{lem_glue_02}(iii),
\begin{align}
\int_{\rho\leq\rho_0-5\delta}|\psi_j|^2 &\leq c_{21}\big( r^{-6} + r^{-1}\int_{\rho\leq\rho_0}|\psi_j|^2 \big) ~.
\end{align}
This completes the proof of the lemma.
\end{proof}

By the iteration argument \cite{ref_Moser}, the $L^2$-norm of $\psi_{j,\ell}$ is rather small near the bindings.

\begin{cor}\label{cor_glue_01}
Suppose that $\chi_B(\rho)$ is a cut-off function with $\chi_B(\rho) = 1$ when $\rho\leq1-45\delta$ and $\chi_B(\rho)=0$ when $\rho\geq1-40\delta$.  There exists a constant $c_{23}$ determined by $a$, $\dd s^2$, $\aE$ and $\chi_B$ such that the following holds.  For any $r\geq c_{23}$,
\begin{enumerate}
\item the $L^2$-norm of $\psi_{j,\ell}$ near the bindings satisfies
\begin{align*}
\int_{\rho\leq1-30\delta}|\psi_{j,\ell}|^2 &\leq c_{23} r^{-6}
\end{align*}
for any $\psi_{j,\ell}$ produced by Lemma \ref{lem_glue_02}(ii);
\item for any $(k,m)\in\ck{\CI}_j$,
\begin{align*}
\big|\int_{\ck{S}}\langle\chi_B\psi_{j,\ell},\ck{\varphi}_{k,m}^{\appr}\rangle\big| &\leq c_{23}r^{-4}
\end{align*}
where $\ck{\varphi}_{k,m}^{\appr}$ is the approximate eigensection given by Proposition \ref{prop_Dirac_ck_01}(ii).  The section $\chi_B\psi_{j,\ell}$ is regarded as a section of $\underline{\BC}\oplus\ck{K}^{-1}\to\ck{S}$ by Remark \ref{rmk_identification_01} and (\ref{eqn_trivial_ck_01}).
\end{enumerate}
\end{cor}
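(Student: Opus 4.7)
The plan is to derive both parts of the corollary from Lemma \ref{lem_glue_03} combined with the orthogonality relation provided by Lemma \ref{lem_glue_02}(ii).

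For part (i), I would iterate Lemma \ref{lem_glue_03}. Set $a_k = \int_{\rho \leq 1-5k\delta}|\psi_{j,\ell}|^2$ for $k = 0, 1, \ldots, 6$. Since $\int_Y|\psi_{j,\ell}|^2 = 1$ by Lemma \ref{lem_glue_02}(ii), we have $a_0 \leq 1$. Applying Lemma \ref{lem_glue_03} at the six values $\rho_0 = 1-5k\delta$ for $k = 0, \ldots, 5$ (all lying in $[1/2,1]$) yields the recursion $a_{k+1} \leq c_{15}(r^{-6} + r^{-1} a_k)$. Unrolling this six times, the multiplier $r^{-1}$ compounds to $r^{-6}$ and balances the additive $r^{-6}$ term, so $a_6 \leq C r^{-6}$, which is the estimate asserted in part (i).

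For part (ii), the starting point is the orthogonality $\int_Y\langle\psi_{j,\ell}, \ck{\varphi}_{k,m}^{\appr}\rangle = 0$ from Lemma \ref{lem_glue_02}(ii). The hypothesis $(k,m)\in\ck{\CI}_j$ forces $\ck{\rho}_{k,m} < 1$, so $\ck{\varphi}_{k,m}^{\appr}$ is supported entirely in the region where $Y$ and $\ck{S}$ are identified, and the two integrals agree. Writing $\chi_B = 1-(1-\chi_B)$ converts the target into $-\int_Y\langle(1-\chi_B)\psi_{j,\ell}, \ck{\varphi}_{k,m}^{\appr}\rangle$. When $\ck{\rho}_{k,m}+2\delta < 1-45\delta$, the supports of $(1-\chi_B)$ and $\ck{\varphi}_{k,m}^{\appr}$ are disjoint and the integral vanishes; when $\ck{\rho}_{k,m}-2\delta > 1-40\delta$, the original integral already vanishes because $\chi_B \equiv 0$ on the support of $\ck{\varphi}_{k,m}^{\appr}$. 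In the remaining borderline window the overlap sits inside $\{\rho \leq 1-30\delta\}$, where part (i) bounds $\psi_{j,\ell}$ in $L^2$ by $O(r^{-3})$; combining with the fine $L^2$-concentration of $\ck{\varphi}_{k,m}^{\appr}$ about $\ck{\rho}_{k,m}$ at scale $r^{-1/2}$ coming from the construction of the ansatz in Proposition \ref{prop_Dirac_ck_01} then yields the desired $Cr^{-4}$ bound.

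The main obstacle is exactly this borderline case. A crude Cauchy--Schwarz that combines the $r^{-3}$ estimate from part (i) with $\int_{\ck{S}}|\ck{\varphi}_{k,m}^{\appr}|^2 = O(1)$ produces only $O(r^{-3})$; extracting the extra factor $r^{-1}$ requires using the localized Gaussian-type profile of the approximate eigensection constructed in the proof of Proposition \ref{prop_Dirac_ck_01}. An alternative, more structural route is to expand $\chi_B\psi_{j,\ell}$ along the eigenbasis of $\ck{D}_r$ as in the decomposition (\ref{eqn_glue_06}) of the proof of Lemma \ref{lem_glue_03}: the $\ck{\varphi}_{k,m}$-coefficient can be controlled using orthogonality together with Proposition \ref{prop_Dirac_ck_01}(ii.b), while the high-frequency tail $\ck{\zeta}^{\err}$ is bounded by (\ref{eqn_glue_07}) whose right-hand side is sharpened by part (i) to $O(r^{-7})$, so that only the narrow window $\ck{\rho}_{k,m}$ close to $1$ requires the ansatz-level analysis.
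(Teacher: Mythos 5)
Part (i) of your proposal is correct and is essentially the paper's argument: iterating Lemma~\ref{lem_glue_03} six times (starting from $\int_Y|\psi_{j,\ell}|^2=1$) drives the bound down to $O(r^{-6})$, which is exactly what the paper means by calling part (i) a ``direct consequence'' of Lemma~\ref{lem_glue_03}.

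Part (ii) has a genuine gap, and you in fact diagnose it yourself but do not close it. Your case analysis correctly isolates the two trivial cases ($\ck{\rho}_{k,m}<1-47\delta$ and $\ck{\rho}_{k,m}>1-38\delta$) and the borderline window $\ck{\rho}_{k,m}\in[1-47\delta,1-38\delta]$. For the borderline window, the proposed use of the Gaussian profile of $\ck{\varphi}_{k,m}^{\appr}$ does not extract the missing factor: the $L^2$-mass of $\ck{\varphi}_{k,m}^{\appr}$ on any region containing its $r^{-1/2}$-scale support is still $O(1)$, so Cauchy--Schwarz with part (i) stalls at $O(r^{-3})$. Your alternative route (expanding $\chi_B\psi_{j,\ell}$ in the $\ck{D}_r$ spectral decomposition and sharpening (\ref{eqn_glue_07}) with part (i)) handles the tail $\ck{\zeta}$ well, but the coefficient $\fc_{k,m}$ of $\ck{\varphi}_{k,m}$ in that expansion again runs into the same borderline obstruction, and you explicitly defer it to an unspecified ``ansatz-level analysis.''

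The paper avoids the borderline entirely with a different decomposition. It inserts a second cutoff $\chi_{\rho_0}$ with $\rho_0=1-35\delta$ and uses the exact identity
\begin{align*}
\int_{\ck{S}}\langle\chi_B\psi_{j,\ell},\ck{\varphi}_{k,m}^{\appr}\rangle
&=\int_{\ck{S}}\langle\chi_{\rho_0}\psi_{j,\ell},\chi_B\ck{\varphi}_{k,m}^{\appr}\rangle
\end{align*}
(valid because $\chi_{\rho_0}\equiv1$ on $\supp\chi_B$). With this choice, $\ck{\rho}_{k,m}\le\rho_0-3\delta$ holds automatically in every non-vanishing case, so the coefficient bound (\ref{eqn_glue_08}) applies uniformly and, sharpened by part (i), gives $|\fc_{k,m}|\lesssim r^{-13/2}$. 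It then \emph{also} decomposes the other factor, writing $\chi_B\ck{\varphi}_{k,m}^{\appr}=\fs\ck{\varphi}_{k,m}+\ck{\xi}_{k,m}$ with $\ck{\xi}_{k,m}\in\CS_{k,m}$ orthogonal to $\ck{\varphi}_{k,m}$; the spectral gap on $\CS_{k,m}$ gives $\int_{\ck{S}}|\ck{\xi}_{k,m}|^2\lesssim r^{-1}$. The dominant term is then $\int\langle\ck{\zeta}_j,\ck{\xi}_{k,m}\rangle\lesssim r^{-7/2}\cdot r^{-1/2}=r^{-4}$, which yields the claimed bound without any borderline case. This two-sided decomposition---in particular expanding $\chi_B\ck{\varphi}_{k,m}^{\appr}$ and controlling $\ck{\xi}_{k,m}$ via the spectral gap---is the step missing from your proposal.
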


\begin{proof}
Assertion (i) is a direct consequence of Lemma \ref{lem_glue_03}.

({Assertion} (ii))\;
When $\ck{\rho}_{k,m}>1-38\delta$, Proposition \ref{prop_Dirac_ck_01}(ii.a) implies that $\chi_B\ck{\varphi}_{k,m}=0$.  Thus, $\int_{\ck{S}}\langle\chi_B\psi_{j},\ck{\varphi}_{k,m}^{\appr}\rangle = 0$.

It remain to estimate the $L^2$-inner product when $\ck{\rho}_{k,m}\leq1-38\delta$.  Let $\rho_0=1-35\delta$, and let $\chi_{\rho_0}$ be the cut-off function introduced in the proof of Lemma \ref{lem_glue_03}.  Since $\chi_B = 1$ on the support of $\chi_{\rho_0}$, $\int_{\ck{S}}\langle\chi_B\psi_{j},\ck{\varphi}_{k,m}^{\appr}\rangle = \int_{\ck{S}}\langle\chi_{\rho_0}\psi_{j},\chi_B\ck{\varphi}_{k,m}^{\appr}\rangle$.

Express $\chi_{\rho_0}\psi_j$ in terms of the spectral decomposition induced by $\ck{D}_r$ as (\ref{eqn_glue_06}):
\begin{align*}
\chi_{\rho_0}\psi_j &= \sum_{|\lambda_{k',m'}|^2\leq\frac{r}{3}}\fc_{k',m'}\ck{\varphi}_{k',m'} + \ck{\zeta}_j ~.
\end{align*}
For any $(k',m')\in\ck{\CI}_j$ with $\ck{\rho}_{k',m'}\leq1-38\delta$, it follows from Assertion (i) and (\ref{eqn_glue_08}) that
$$    |\fc_{k',m'}|^2\leq c_{24}r^{-13} ~.    $$
By Assertion (i) and (\ref{eqn_glue_07}),
$$    \int_{\ck{S}}|\ck{\zeta}_j|^2\leq c_{25}r^{-7} ~.    $$
Similarly, express $\chi_B\ck{\varphi}^{\appr}_{k,m}$ in terms of the spectral decomposition induced by $\ck{D}_r$:
\begin{align*}
\chi_B\ck{\varphi}^{\appr}_{k,m} &= \fs\ck{\varphi}_{k,m} + \ck{\xi}_{k,m}
\end{align*}
where $\ck{\xi}_{k,m}\in\CS_{k,m}$ and is $L^2$-orthogonal to $\ck{\varphi}_{k,m}$.  According to Proposition \ref{prop_Dirac_ck_01}(ii.b) and (ii.c),
\begin{align*}
|\fs|\leq2 \qquad\text{ and }\qquad |\ck{\xi}_{k,m}|^2 \leq cr^{-1}\int|(\ck{D}_r - \lambda_{k,m})(\chi_B\ck{\varphi}_{k,m}^{\appr})|^2 \leq c_{26}r^{-1} ~.    \end{align*}

It follows that
\begin{align*}
\Big|\int_{\ck{S}}\langle\chi_{\rho_0}\psi_{j},\chi_B\ck{\varphi}_{k,m}^{\appr}\rangle\Big| &= \Big| \fc_{k,m}\fs + \int_{\ck{S}}\langle\ck{\zeta}_j,\ck{\xi}_{k,m}\rangle \Big| \leq c_{27}r^{-4} ~.
\end{align*}
This completes the proof of the corollary.
\end{proof}

\subsection{A linear algebra lemma}\label{subsec_glue_03}
The following lemma produces genuine eigenvalues of a Dirac operator from approximate eigenvalues.  It only involves linear algebra, and is a minor modification of \cite[Lemma 6.4]{ref_Ts1}.

\begin{lem}\label{lem_glue_04}
Let $\fD$ be a Dirac operator on a spinor bundle $\BS$.  If there are constants $\epsilon_1$ and $\epsilon_2$, a finite number of smooth sections $\{\xi_\ell\}_{\ell=1}^L$ of $\BS$, and real numbers $\{\mu_\ell\}_{\ell=1}^L$ satisfying the following properties:
\begin{enumerate}
\item $0<\epsilon_2<\frac{1}{4}$ and $\epsilon_2\big(\max\{\mu_\ell\}-\min\{\mu_\ell\}\big)^2\leq\epsilon_1$;
\item for any $1\leq\ell\leq L$, $\big|1-\int|\xi_\ell|^2\big|\leq\epsilon_2$, and $\sum_{\ell'=1,\ell'\neq\ell}^L\big|\int\langle\xi_{\ell'},\xi_{\ell}\rangle\big|\leq\epsilon_2$;
\item $\int|(\fD-\mu_\ell)\xi_\ell|^2\leq\epsilon_1$ for any $1\leq\ell\leq L$;
\item $\sum_{\ell'=1,\ell'\neq\ell}^L\big|\int\langle(\fD-\mu)\xi_{\ell'},(\fD-\mu)\xi_\ell\rangle\big|\leq\epsilon_1$ for any $1\leq\ell\leq L$ and $\min\{\mu_\ell\}\leq\mu\leq\max\{\mu_\ell\}$.
\end{enumerate}
Then, there exist $L$ eigenvalues (counting multiplicity) $\{\lambda_\ell\}_{\ell=1}^L$ of $\fD$ such that $|\lambda_\ell - \mu_\ell|\leq4\sqrt{\epsilon_1}$ for any $1\leq\ell\leq L$.
\end{lem}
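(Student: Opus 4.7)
The plan is to combine a spectral projection argument with Hall's marriage theorem. Set $R = 4\sqrt{\epsilon_1}$. I will prove the stronger statement that for every non-empty subset $S \subseteq \{1,\ldots,L\}$, the number of eigenvalues of $\fD$ (counted with multiplicity) lying in
\[
U_S := \bigcup_{\ell \in S}\bigl[\mu_\ell - R,\;\mu_\ell + R\bigr]
\]
is at least $|S|$. This is exactly Hall's condition in the bipartite graph joining each index $\ell \in \{1,\ldots,L\}$ to every eigenvalue $\lambda$ of $\fD$ (with multiplicity) satisfying $|\lambda - \mu_\ell|\leq R$, so a perfect matching $\ell\mapsto \lambda_\ell$ with $|\lambda_\ell-\mu_\ell|\leq R$ exists, as required.

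Verifying the spectral count for a fixed $S$ proceeds as follows. First, condition (ii), combined with Gershgorin's theorem, forces the Gram matrix of $\{\xi_\ell\}_{\ell\in S}$ to have eigenvalues in $[1-2\epsilon_2,1+2\epsilon_2]\subset[\tfrac12,\tfrac32]$, so $V_S := \operatorname{span}\{\xi_\ell:\ell\in S\}$ has dimension $|S|$. Next, let $\Pi_S$ denote the spectral projection of $\fD$ onto $U_S$; it suffices to show $\Pi_S|_{V_S}$ is injective, because then $\dim\Pi_S(\BS)\geq|S|$ and hence $\fD$ has at least $|S|$ eigenvalues in $U_S$.

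Assume for contradiction that a nonzero $\eta = \sum_{\ell\in S} a_\ell\,\xi_\ell$ with $\sum|a_\ell|^2 = 1$ satisfies $\Pi_S\eta = 0$. Then $\|\eta\|^2 \geq 1-2\epsilon_2 > \tfrac12$ by (ii) again, and since the spectral measure of $\eta$ misses every interval $[\mu_\ell-R,\mu_\ell+R]$ with $\ell\in S$, one has $\|(\fD-\mu_\ell)\eta\|^2 > R^2\|\eta\|^2$ for each such $\ell$. Averaging with weights $|a_\ell|^2$ gives a lower bound of size $\gtrsim R^2 = 16\epsilon_1$. For the matching upper bound, I will expand each $\|(\fD-\mu_\ell)\eta\|^2$ as a double sum in $(\ell_1,\ell_2)$: condition (iii) together with the triangle inequality controls the diagonal terms, $\|(\fD-\mu_\ell)\xi_{\ell_1}\|^2\leq 2\epsilon_1 + \tfrac52(\mu_\ell-\mu_{\ell_1})^2$, while condition (iv) applied with $\mu = \mu_\ell$ bounds each off-diagonal row-sum by $\epsilon_1$. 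Summing over $\ell$ with weights $|a_\ell|^2$, the resulting variance-type double sum $\sum |a_\ell|^2|a_{\ell_1}|^2(\mu_\ell-\mu_{\ell_1})^2$ is controlled by $D^2/2$ where $D=\max\mu_\ell-\min\mu_\ell$, and condition (i) $\epsilon_2 D^2\leq\epsilon_1$ converts this into an absolute multiple of $\epsilon_1$. Choosing constants carefully violates the lower bound and produces the desired contradiction.

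The main obstacle is the quantitative bookkeeping in the last step: one must verify that the interplay of conditions (i)--(iv) closes the gap between the lower and upper bounds at the precise radius $R = 4\sqrt{\epsilon_1}$. Condition (i) is the crucial balancing input, calibrated so that the spread $D$ of the approximate eigenvalues cannot overwhelm the contradiction no matter how the $a_\ell$'s distribute their weight. The remaining ingredients---the subset-by-subset spectral-projection argument and Hall's marriage theorem---are structural and standard; together they package the pointwise residual control provided by (iii)--(iv) into a bona fide matching of the $\mu_\ell$'s with genuine eigenvalues of $\fD$.
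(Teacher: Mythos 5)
Your approach (Hall's marriage theorem plus spectral projections, checking for every subset $S$ that the spectral projection $\Pi_S$ restricted to $V_S=\operatorname{span}\{\xi_\ell:\ell\in S\}$ is injective) is genuinely different from the paper's, which argues by induction on $L$. The conceptual reduction to Hall's condition is sound, but the quantitative step where you try to obtain a contradiction does not close, and I believe this is a real gap rather than omitted bookkeeping.

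The issue is the term $\tfrac{5}{2}\sum_{\ell,\ell_1}|a_\ell|^2|a_{\ell_1}|^2(\mu_\ell-\mu_{\ell_1})^2$ in your upper bound. You bound it by a multiple of $D^2$ where $D=\max\mu_\ell-\min\mu_\ell$, and then assert that condition (i), $\epsilon_2 D^2\leq\epsilon_1$, ``converts this into an absolute multiple of $\epsilon_1$''. That is not what condition (i) says: it only gives $D^2\leq\epsilon_1/\epsilon_2$, so the variance term is bounded by $\tfrac{5}{4}\epsilon_1/\epsilon_2$, which is not $O(\epsilon_1)$ when $\epsilon_2$ is small. In the application in \S\ref{sec_gluing} one has $\epsilon_1,\epsilon_2\sim r^{-5/2}$ while $D$ can be of order $1$ (the $\mu_\ell$ range over essentially the whole interval $(\nu_j,\nu_{j+1})$, which has length about $1$), so $D^2$ is comparable to $1$ while $8\epsilon_1$ tends to $0$; your upper bound $3\epsilon_1+\tfrac54 D^2$ then far exceeds the lower bound $R^2\|\eta\|^2\geq 8\epsilon_1$, and no contradiction arises. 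A concrete check: take $S=\{1,2\}$, $\mu_1=0$, $\mu_2=1$, $a_1=a_2=1/\sqrt2$; the diagonal contribution alone is already $\geq\tfrac54\cdot\tfrac12$, independent of $\epsilon_1$.

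The paper avoids this by not trying to control the full spread $D$ at once. It inducts on $L$: removing one $(\xi_\ell,\mu_\ell)$ and invoking the induction hypothesis either already supplies $L$ matched eigenvalues, or else forces consecutive gaps $|\mu_\ell-\mu_{\ell+1}|\leq 8\sqrt{\epsilon_1}$. In that clustered regime a \emph{single} Rayleigh-quotient test vector $\sum\fs_\ell\xi_\ell$, chosen $L^2$-orthogonal to the $L-1$ eigensections already found, produces one additional eigenvalue in $[\mu_1-4\sqrt{\epsilon_1},\,\mu_L+4\sqrt{\epsilon_1}]$, and the clustering is exactly what lets one reassign it to some $\mu_\ell$ within distance $4\sqrt{\epsilon_1}$. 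If you want to salvage a Hall-type proof, you would need a similar dichotomy (spread out vs.\ clustered) before attempting the spectral-projection injectivity estimate; as written, the injectivity claim is not established.
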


\begin{proof}
The lemma is clearly true for $L=1$.  The plan is to do induction on the total number of approximate eigenvalues.  Suppose that the lemma is true for $L-1$ approximate eigenvalues.  Without loss of generality, assume that $\{\mu_\ell\}_{\ell=1}^L$ is non-decreasing in $\ell$.

For any $\ell\in\{1,2,\cdots,L\}$, remove $\xi_\ell$ and $\mu_\ell$, and apply the lemma.  If this procedure produces $L$ eigenvalues (counting multiplicity), it is done.  Now, suppose that there are only $(L-1)$ eigenvalues, $\{\lambda_\ell\}_{\ell=1}^{L-1}$.  They must satisfy
\begin{align*}
|\lambda_\ell - \mu_\ell| &\leq 4\sqrt{\epsilon_1} &&\text{and} &|\lambda_\ell - \mu_{\ell+1}| &\leq 4\sqrt{\epsilon_1}
\end{align*}
for any $\ell\in\{1,2,\cdots,L-1\}$.  It follows from the triangle inequalities that
\begin{align}\label{eqn_glue_11}
|\mu_\ell - \mu_{\ell+1}| \leq 8\sqrt{\epsilon_1}
\end{align}
for any $\ell\in\{1,2,\cdots,L-1\}$.

Property (ii) guarantees that $\{\xi_\ell\}_{\ell=1}^L$ forms a linear independence set.  Thus, there exist complex numbers $\{\fs_\ell\}_{\ell=1}^{L}$ such that $\sum_{\ell=1}^{L}\fs_\ell\xi_\ell$ is $L^2$-orthogonal to the corresponding eigensections of $\lambda_\ell$ for all $\ell\in\{1,2,\cdots,L-1\}$.  Normalize its $L^2$-norm to be $1$.  It follows that
\begin{align*}
1 &= \sum_{\ell=1}^L|\fs_\ell|^2\int|\xi_{\ell}|^2 + \sum_{\ell=1}^L\sum_{\ell'\neq\ell}\fs_\ell\bar{\fs}_{\ell'}\int\langle\xi_\ell,\xi_{\ell'}\rangle \\
&\geq (1-\epsilon_2)\sum_{\ell=1}^L|\fs_\ell|^2 - \oh\sum_{\ell=1}^L\sum_{\ell'\neq\ell}(|\fs_\ell|^2+|\fs_{\ell'}|^2)\big|\int\langle\xi_{\ell},\xi_{\ell'}\rangle\big| \\
&\geq (1-2\epsilon_2) \sum_{\ell=1}^L|\fs_\ell|^2 ~,
\end{align*}
and then $\sum_{\ell=1}^L|\fs_\ell|^2\leq 1+4\epsilon_2$.

For any real number $\mu$ with $\min\{\mu_\ell\}\leq\mu\leq\max\{\mu_\ell\}$,
\begin{align*}
\int\big|(\fD-\mu)(\xi_\ell)\big|^2 &= \int \big|(\fD-\mu_\ell)(\xi_\ell) - (\mu-\mu_\ell)\xi_\ell\big|^2 \\
&\leq \epsilon_1 + 2|\mu-\mu_\ell|\sqrt{\epsilon_1(1+\epsilon_2)} + |\mu-\mu_\ell|^2(1+\epsilon_2) \\
&\leq 2\epsilon_1 + 2\sqrt{2}|\mu-\mu_\ell|\sqrt{\epsilon_1} + |\mu-\mu_\ell|^2 = \big( \sqrt{2\epsilon_1} + |\mu-\mu_\ell| \big)^2 ~.
\end{align*}
It follows that
\begin{align*}
\int \big| (\fD-\mu)(\sum_{\ell=1}^{L}\fs_\ell\xi_\ell) \big|^2 &\leq \sum_{\ell=1}^L |\fs_\ell|^2 \int \big| (\fD-\mu)\xi_\ell \big|^2 + \sum_{\ell=1}^L|\fs_\ell|^2\sum_{\ell'\neq\ell}\big| \int\langle(\fD-\mu)\xi_\ell,(\fD-\mu)\xi_{\ell'}\rangle \big| \\
&\leq \sum_{\ell=1}^L |\fs_\ell|^2 \big( (\sqrt{2\epsilon_1} + |\mu-\mu_\ell|)^2 + \epsilon_1\big) \\
&\leq (1 + 4\epsilon_2)(\sqrt{3 \epsilon_1} + \max_{1\leq\ell\leq L} |\mu-\mu_\ell|)^2 \\
&\leq (4\sqrt{\epsilon_1} + \max_{1\leq\ell\leq L} |\mu-\mu_\ell|)^2~.
\end{align*}
By taking $\mu = \frac{\mu_1+\mu_L}{2}$, the inequality finds another eigenvalue $\lambda_L$ with
\begin{align*}
\mu_1 - 4\sqrt{\epsilon_1} \leq \lambda_L \leq \mu_L + 4\sqrt{\epsilon_1} ~,
\end{align*}
and its eigensection is $L^2$-orthogonal to the eigensections of $\{\lambda_\ell\}_{\ell=1}^{L-1}$.  According to (\ref{eqn_glue_11}), there exist some $\ell\in\{1,2,\cdots,L\}$ such that $|\lambda_L - \mu_\ell| \leq 4\sqrt{\epsilon_1}$.  After re-numbering the indices of $\{\lambda_\ell\}_{\ell=1}^L$ in the non-decreasing order, these $L$ eigenvalues satisfy the assertion of the lemma.
\end{proof}

\subsection{Gluing eigensections to $\ex{Y}$}\label{subsec_glue_04}
This step constructs eigenvalues of $(\ex{Y},\ex{D}_r)$ corresponding to $(\#\CI_j-\#\ck{\CI}_j)+\#\hat{\CI}_j$.

\begin{prop}\label{prop_glue_05}
There exists a constant $c_{30}>c_1$ determined by the contact form $a$, the metric $\dd s^2$ and the connection $\aE$ with the following significance.  For any $r\geq c_{30}$, let $\{\nu_j : -[\oh r^\oh]<j<[\oh r^\oh]\}$ be the sequence produced by Lemma \ref{lem_glue_01}.  Then,
\begin{align*}   (\#\CI_j-\#\ck{\CI}_j)+\#\hat{\CI}_j &\leq \#\ex{\CI}_j   \end{align*}
for any $-[\oh r^\oh]<j<[\oh r^\oh]-1$.
\end{prop}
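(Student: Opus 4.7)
The plan is to invoke the linear-algebraic criterion of Lemma~\ref{lem_glue_04} applied to $\fD=\ex{D}_r$, using a combined system of approximate eigensections assembled from two sources. Write $L = (\#\CI_j-\#\ck\CI_j)+\#\hat\CI_j$. First I will construct two families of approximate eigensections of $\ex{D}_r$:
\emph{Type A}, defined by $\ex\xi^A_{j,\ell}=(1-\chi_B)\psi_{j,\ell}$ for $1\le\ell\le\#\CI_j-\#\ck\CI_j$, where $\psi_{j,\ell}$ comes from Lemma~\ref{lem_glue_02}(ii); and
\emph{Type B}, defined by transplanting $\hat\xi_{k,n}=\tfrac{1}{\sqrt{2\pi}}e^{ik\theta}(\hat\alpha^{\appr}_{k,n},0)$ from $\hat{S}$ to $\ex{Y}$ for $(k,n)\in\hat\CI_j$, using the identification of $B\times S^1\subset\ex{Y}$ with the region $\{\rho<1+50\delta\}\subset\hat{S}$ where the stable Hamiltonian structure, metric, and spinor bundle agree. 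By Lemma~\ref{lem_glue_02}(ii) and Proposition~\ref{prop_Dirac_hat_01}(iii), with the identity $\ex{D}_r=D_r$ on $\Sigma\times_\tau S^1$ and $\ex{D}_r=\hat{D}_r$ on the collar (valid in the $\bo_\Sigma$-gauge, where in the collar the connection on $\ex{L}_r$ becomes trivial because $r(\ex h_\sigma+\tfrac12 \ex g)=[r]$), both families are approximate eigensections of $\ex{D}_r$: Type~A with eigenvalue $\mu_{j,\ell}$ and $L^2$-error $\le c(r^{-6}+\text{cut-off error})$ using Corollary~\ref{cor_glue_01}(i) to control the $\cl(\dd\chi_B)\psi_{j,\ell}$ term, and Type~B with exact approximate eigenvalue $\tfrac{r}{2}-\tfrac{k}{V}$ and $L^2$-error $\le cr^{-6}$.

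Next I will verify the hypotheses of Lemma~\ref{lem_glue_04} with $\epsilon_1=O(r^{-3})$ and $\epsilon_2=O(r^{-3})$. The $L^2$ near-orthonormality of each individual family is straightforward: within Type~A it follows from Corollary~\ref{cor_glue_01}(i) applied to $(1-(1-\chi_B)^2)$; within Type~B, different $(k,n)$ are exactly orthogonal by independent frequencies in $e^{ik\theta}$ and $e^{int}$. The spread $\max\mu-\min\mu$ is bounded by $\nu_{j+1}-\nu_j<2$, so condition~(i) reduces to $\epsilon_2\lesssim\epsilon_1$. The key point is the cross-term between Type~A and Type~B, which I split into two cases. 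When $\hat\rho_{k,n}\le 1-48\delta$, Proposition~\ref{prop_Dirac_hat_01}(ii) localises $\ex\xi^B_{k,n}$ inside $\{\rho\le 1-46\delta\}$, disjoint from $\operatorname{supp}\ex\xi^A_{j,\ell}\subset\{\rho\ge 1-45\delta\}$, so the cross-term vanishes. When $\hat\rho_{k,n}>1-48\delta$, Proposition~\ref{prop_Dirac_hat_01}(v) identifies the transplanted $\ex\xi^B_{k,n}$, written in the $\bo_\Sigma$ gauge, with $\fc_{k,n}^{-1}\ck\varphi^{\appr}_{k,n+[r]}$; then
\begin{align*}
\int_{\ex Y}\langle \ex\xi^A_{j,\ell},\ex\xi^B_{k,n}\rangle
=\fc_{k,n}^{-1}\Big(\int_Y\langle\psi_{j,\ell},\ck\varphi^{\appr}_{k,n+[r]}\rangle-\int_Y \chi_B\langle\psi_{j,\ell},\ck\varphi^{\appr}_{k,n+[r]}\rangle\Big).
\end{align*}
When $(k,n+[r])\in\ck\CI_j$, the first integral vanishes exactly by Lemma~\ref{lem_glue_02}(ii) and the second is $O(r^{-4})$ by Corollary~\ref{cor_glue_01}(ii). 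When $(k,n+[r])\notin\ck\CI_j$ (possible because the approximation $(r-\ck\gamma_{k,m})/2$ has $O(1)$ error), the two integrals are instead bounded by expanding $\ck\varphi^{\appr}_{k,n+[r]}$ in the $D_r$-spectral basis on $Y$ and using the spectral gap $c_2 r^{-1}$ from Lemma~\ref{lem_glue_01}(ii): the projection of $\ck\varphi^{\appr}_{k,n+[r]}$ onto $\CV_j$ has $L^2$-norm $O(r^{-2})$, which combined with Corollary~\ref{cor_glue_01}(ii) gives the requisite bound on the sum over $(k,n)$ since $\#\hat\CI_j\le c r$.

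Finally I will handle condition~(iv) by writing $(\ex{D}_r-\mu)\xi_{\ell}=(\mu_\ell-\mu)\xi_\ell+\eta_\ell$ and bounding the three bilinear contributions: the $\xi$-$\xi$ piece by condition~(ii) inflated by $(\max-\min)^2$; the $\eta$-$\xi$ piece using that Type~A errors $\psi^{\err}_{j,\ell}\in\pr_j(\ck\CV_j)$ are exactly orthogonal to $\psi_{j,\ell'}\in\ck\CV_j^\prp$ (Lemma~\ref{lem_glue_02}(ii)) up to cut-off tails which are $O(r^{-6})$ by Corollary~\ref{cor_glue_01}(i); and the $\eta$-$\eta$ piece by Cauchy--Schwarz using the individual error bounds. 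Lemma~\ref{lem_glue_04} then produces $L$ eigenvalues of $\ex{D}_r$ (counted with multiplicity) within $4\sqrt{\epsilon_1}=O(r^{-3/2})$ of the approximate eigenvalues. Since Type~A approximates lie in $(\nu_j+c_2r^{-3/2},\nu_{j+1}-c_2r^{-3/2})$ by Lemma~\ref{lem_glue_02}(ii), Type~B approximates lie in $(\nu_j+c_2r^{-1},\nu_{j+1}-c_2r^{-1})$ by Lemma~\ref{lem_glue_01}(ii) applied to $\hat{D}_r$, and $4\sqrt{\epsilon_1}\ll c_2r^{-3/2}$ for $r$ large, these $L$ eigenvalues lie in $(\nu_j,\nu_{j+1})$, giving $\#\ex\CI_j\ge L$.

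The main obstacle is the cross-term estimate in the ``collar overlap'' when $\hat\rho_{k,n}>1-48\delta$, because here the Type~B support in $\ex{Y}$ genuinely overlaps the support of $(1-\chi_B)\psi_{j,\ell}$, and the orthogonality provided by Lemma~\ref{lem_glue_02}(ii) only applies to $(k,m)\in\ck\CI_j$; handling the exceptional pairs $(k,n+[r])\notin\ck\CI_j$ requires the spectral-gap argument outlined above. A secondary bookkeeping burden is tracking the various trivializations of $K^{-1}$ (formulas~\eqref{eqn_trivial_ck_01} and~\eqref{eqn_ext_trivial_K} differ by a phase $e^{-i(\theta+t)}$ in the collar) and of $\ex{L}_r$ (gauge transition $\bo_B=e^{i[r]t}\bo_\Sigma$), which together conspire to make the Proposition~\ref{prop_Dirac_hat_01}(v) identification valid and the cross-term rewriting legitimate.
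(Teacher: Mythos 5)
Your proposal follows the same strategy as the paper's proof: construct Type~A approximate eigensections of $\ex{D}_r$ by cutting off the $\psi_{j,\ell}$ of Lemma~\ref{lem_glue_02}(ii) (your $1-\chi_B$ is the paper's $\ex{\chi}$), construct Type~B ones by transplanting the $\hat{\alpha}^{\appr}_{k,n}$ of Proposition~\ref{prop_Dirac_hat_01}, estimate cross-terms by disjointness of supports away from the collar and by Proposition~\ref{prop_Dirac_hat_01}(v) together with Corollary~\ref{cor_glue_01}(ii) inside the collar, and feed the result into Lemma~\ref{lem_glue_04}.  Two points need fixing.  First, your extra sub-case ``$(k,n+[r])\notin\ck{\CI}_j$'' in the collar overlap $\hat{\rho}_{k,n}>1-48\delta$ is vacuous: the hypothesis of Proposition~\ref{prop_Dirac_hat_01}(v) forces $\ck{\rho}_{k,n+[r]}$ to lie in the collar, and the remark closing \S\ref{subsec_ap1} shows that there the eigenvalue of $\ck{D}_r$ on $\CS_{k,n+[r]}$ is $\frac{r}{2}-\frac{k}{V}$ up to an $O(e^{-r/c})$ error (not the $O(1)$ error you fear); since this equals the Type~B approximate eigenvalue, which lies in $(\nu_j+c_2r^{-1},\nu_{j+1}-c_2r^{-1})$ by Lemma~\ref{lem_glue_01}(ii), one automatically has $(k,n+[r])\in\ck{\CI}_j$.

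Second, the endgame does not close as stated.  Within the proof the only available a priori count is $\#\CI_j-\#\ck{\CI}_j\le c_7 r^{\frac{3}{2}}$ from~\eqref{eqn_glue_14} (the $O(r)$ bound is a consequence of Theorem~\ref{thm_glue_01}, which this proposition is in the middle of proving).  Summing $O(r^{-4})$ cross-terms over that many Type~A indices forces $\epsilon_1=O(r^{-\frac{5}{2}})$ rather than $O(r^{-3})$, whence $4\sqrt{\epsilon_1}=O(r^{-\frac{5}{4}})$, which is \emph{not} $\ll c_2r^{-\frac{3}{2}}$ --- it is larger.  So you cannot conclude from the Type~A approximates' placement in $(\nu_j+c_2r^{-\frac{3}{2}},\nu_{j+1}-c_2r^{-\frac{3}{2}})$ that the $L$ eigenvalues produced by Lemma~\ref{lem_glue_04} lie in $(\nu_j,\nu_{j+1})$.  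The missing ingredient is Lemma~\ref{lem_glue_01}(ii) for $\ex{D}_r$ itself: since $\ex{D}_r$ has no spectrum within $c_2r^{-1}$ of $\nu_j$ or $\nu_{j+1}$ and $4\sqrt{\epsilon_1}=O(r^{-\frac{5}{4}})<c_2r^{-1}$ for $r$ large, the produced eigenvalues are forced into $(\nu_j+c_2r^{-1},\nu_{j+1}-c_2r^{-1})$, giving $\#\ex{\CI}_j\ge L$.
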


\begin{proof}
The strategy is to construct approximate eigensections of $(\ex{Y},\ex{D}_r)$ corresponding to $\#\CI_j-\#\ck{\CI}_j$ and $\#\hat{\CI}_j$, and apply Lemma \ref{lem_glue_04} to estimate the eigenvalues of $(\ex{Y},\ex{D}_r)$.

\smallskip
(\emph{Step 1: approximate eigensections for $\#\CI_j-\#\ck{\CI}_j$})\;
For any $r\geq c_9$ and $-[\oh r^\oh]<j<[\oh r^\oh]-1$, consider the approximate eigensections $\{\psi_{j,\ell}\}_{1\leq\ell\leq\#\CI_j - \#\ck{\CI}_j}$ and the approximate eigenvalues $\{\mu_{j,\ell}\}_{1\leq\ell\leq\#\CI_j - \#\ck{\CI}_j}$ produced by Lemma \ref{lem_glue_02}(ii).  Let $\ex{\chi}(\rho)$ be a cut-off function with $\ex{\chi}(\rho) = 1$ when $\rho\geq1-40\delta$ and $\ex{\chi}(\rho) = 0$ when $\rho\leq1-45\delta$.  It can be regarded as a smooth function on both $Y$ and $\ex{Y}$, and it is equal to $1$ on $\Sigma\times_\tau S^1$.

The following recipe produces sections of $(\ex{E}\oplus\ex{E}_r\ex{K}^{-1})\otimes\ex{L}_r\to\ex{Y}$ from $\{\psi_{j,\ell}\}_{1\leq\ell\leq\#\CI_j - \#\ck{\CI}_j}$.
\begin{itemize}
\item Multiply $\{\psi_{j,\ell}\}_\ell$ by $\ex{\chi}$.  $\{\ex{\chi}\psi_{j,\ell}\}_\ell$ are still smooth sections of $E\oplus EK^{-1}\to Y$.
\item $\{\ex{\chi}\psi_{j,\ell}\}_\ell$ vanish on the tubular neighborhood $\{0\leq\rho\leq1-45\delta\}$ of the binding.  According to the constructions in \S\ref{subsec_sh_str} and \S\ref{subsec_deg_r}, $(\ex{E}\oplus\ex{E}_r\ex{K}^{-1})\otimes\ex{L}_r\to\supp(\ex{\chi})\subset\ex{Y}$ is isomorphic to $E\oplus EK^{-1}\to\supp(\ex{\chi})\subset Y$.  Thus, $\{\ex{\chi}\psi_{j,\ell}\}_\ell$ can be thought as as smooth sections of $(\ex{E}\oplus\ex{E}_r\ex{K}^{-1})\otimes\ex{L}_r\to\ex{Y}$.
\item According to the construction of $\ex{D}_r$ in \S\ref{subsec_ext_Lr}, $D_r(\ex{\chi}\psi_{j,\ell})$ is the same as $\ex{D}_r(\ex{\chi}\psi_{j,\ell})$ via this identification.
\end{itemize}
Take $\{\ex{\chi}\psi_{j,\ell}\}_{1\leq\ell\leq\#\CI_j-\#\ck{\CI}_j}$ to be the approximate eigensections corresponding to $\#\CI_j-\#\ck{\CI}_j$.  The approximate eigenvalues are $\{\mu_{j,\ell}\}_{1\leq\ell\leq\#\CI_j-\#\ck{\CI}_j}$.

\smallskip
(\emph{Step 2: approximate eigensections for $\#\hat{\CI}_j$})\;
For any $(k,n)\in\hat{\CI}_j$, consider the section
\begin{align*}  \hat{\varphi}_{k,n}^{\appr} &= \frac{1}{\sqrt{2\pi}}e^{ik\theta}(\hat{\alpha}_{k,n}^{\appr},0) \quad\text{ of }\hat{L}_r\oplus\hat{L}_r\hat{K}^{-1}\to\hat{S} \end{align*}
where $\hat{\alpha}_{k,n}^{\appr}$ is the approximate eigensection of $\hat{D}_r$ given by Proposition \ref{prop_Dirac_hat_01}, and the expression is with respect to the trivialization (\ref{eqn_hat_trivial_K}) on $\{0\leq\rho<2\}$.  Since $\hat{\rho}_{k,n}<1$ for any $(k,n)\in\hat{\CI}_j$, Proposition \ref{prop_Dirac_hat_01}(ii) implies that the support of $\hat{\varphi}_{k,n}^{\appr}$ is contained in $\{0\leq\rho\leq1+2\delta\}$.

With the help of the trivialization (\ref{eqn_hat_trivial_K}) and Remark \ref{rmk_identification_01}, $\hat{\varphi}_{k,n}^{\appr}$ can be regarded as a section of $(\ex{E}\oplus\ex{E}_r\ex{K}^{-1})\otimes\ex{L}_r\to\ex{Y}$.  It is not hard to see that $\hat{D}_r$ coincides with $\ex{D}_r$.  With this understood, take $\{\hat{\varphi}_{k,n}^{\appr}\}_{(k,n)\in\hat{\CI}_j}$ to be the approximate eigensections corresponding to $\#\hat{\CI}_j$.  The approximate eigenvalues are $\{\frac{r}{2}-\frac{k}{V}\}_{(k,n)\in\hat{\CI}_j}$.

\smallskip
(\emph{Step 3: condition} (ii) \emph{of Lemma \ref{lem_glue_04}})\;
We are going to apply Lemma \ref{lem_glue_04} on the approximate eigensections
\begin{align*} \big\{ \ex{\chi}\psi_{j,\ell} \big\}_{1\leq\ell\leq\#\CI_j-\#\ck{\CI}_j}\cup\big\{ \hat{\varphi}_{k,n}^{\appr}\big\}_{(k,n)\in\hat{\CI}_j}
\end{align*}
and approximate eigenvalues $\{\mu_{j,\ell}\}\cup\{\frac{r}{2}-\frac{k}{V}\}$.  The precise values of $\epsilon_1$ and $\epsilon_2$ will be chosen in step 6.

According to Lemma \ref{lem_glue_02}(ii), $\psi_{j,\ell}$ has unit $L^2$-norm.  By Corollary \ref{cor_glue_01}(i),
\begin{align}\label{eqn_glue_12}
\big| 1 - \int_{\ex{Y}}|\ex{\chi}\psi_{j,\ell}|^2 \big| = \int_Y(1-\ex{\chi}^2)|\psi_{j,\ell}|^2 \leq \int_{\rho\leq1-40\delta}|\psi_{j,\ell}|^2 \leq c_{31}r^{-6} ~.
\end{align}
Due to Proposition \ref{prop_Dirac_hat_01}(iv),
\begin{align}\label{eqn_glue_13}
\big| 1 - \int_{\ex{Y}}|\hat{\varphi}_{k,n}^{\appr}|^2 \big| = \int_{S^2}|\hat{\alpha}_{k,n}^{\err}|^2 \leq c_{32}r^{-7} ~.
\end{align}

There are four cases of the $L^2$-inner products between approximate eigensections.
\begin{itemize}
\item Lemma \ref{lem_glue_02}(ii) says that $\int_Y\langle\psi_{j,\ell},\psi_{j,\ell'}\rangle=0$ for any $\ell\neq\ell'$.  It follows that $\int_{\ex{Y}}\langle\ex{\chi}\psi_{j,\ell},\ex{\chi}\psi_{j,\ell'}\rangle = \int_{\ex{Y}}(\ex{\chi}^2-1)\langle\psi_{j,\ell},\psi_{j,\ell'}\rangle$.  By Corollary \ref{cor_glue_01}(i),
$$    \big|\int_{\ex{Y}}\langle\ex{\chi}\psi_{j,\ell},\ex{\chi}\psi_{j,\ell'}\rangle\big| \leq c_{31}r^{-6} ~.    $$
\item If $(1+47\delta)k<(n+[r])V$, the $L^2$-inner product
$$    \int_{\ex{Y}}\langle\hat{\varphi}_{k,n}^{\appr},\ex{\chi}\psi_{j,\ell}\rangle = 0 ~.    $$
The reason goes as follows.  The condition $(1+47\delta)k<(n+[r])V$ is equivalent to that $\hat{\rho}_{k,n}<1-47\delta$.  It follows from Proposition \ref{prop_Dirac_hat_01}(ii) that the support of $\hat{\varphi}_{k,n}^{\appr}$ is contained in where $\{\rho<1-45\delta\}$.  Hence, the supports of $\hat{\varphi}_{k,n}^{\appr}$ and $\ex{\chi}\psi_{j,\ell}$ are disjoint.
\item If $(1+47\delta)k\geq(n+[r])V>k$, Proposition \ref{prop_Dirac_hat_01}(v) identifies $\hat{\varphi}_{k,n}^{\appr}$ with $\fc_{k,n}\ck{\varphi}^{\appr}_{k,n+[r]}$ for some constant $\fc_{k,n}$ with $|1-\fc_{k,n}|\leq ce^{-\frac{c}{r}}$.  Since the approximate eigenvalue of $\hat{\psi}_{k,n}^{\appr}$ lies within $(\nu_j,\nu_{j+1})$, $(k,n+[r])$ must belong to $\ck{\CI}_j$.  According to Lemma \ref{lem_glue_02}(ii), $\int_{Y}\langle\ck{\varphi}_{k,n+[r]}^{\appr},\psi_{j,\ell}\rangle = 0$.  It follows that $\int_{\ex{Y}}\langle\hat{\varphi}_{k,n}^{\appr},\ex{\chi}\psi_{j,\ell}\rangle = \int_{\ex{Y}}\langle\hat{\varphi}_{k,n}^{\appr},(1-\ex{\chi})\psi_{j,\ell}\rangle$.  According to Corollary \ref{cor_glue_01}(ii) (with $\chi_B = 1-\ex{\chi}$),
$$   |\int_{\ex{Y}}\langle\hat{\varphi}_{k,n}^{\appr},\ex{\chi}\psi_{j,\ell}\rangle|\leq c_{33}r^{-4} ~.    $$
\item If $(k,n)\neq(k',n')$, $\hat{\varphi}_{k,n}^{\appr}$ and $\hat{\varphi}_{k',n'}^{\appr}$ have different Fourier frequencies, and must be $L^2$-orthogonal to each other. 	
\end{itemize}
The above estimates and (\ref{eqn_glue_14}) imply that for any $\ell\in\{1,\cdots,\#\CI_j-\#\ck{\CI}_j\}$,
\begin{align}\label{eqn_glue_15}
\sum_{\scriptstyle\ell'=1\atop\scriptstyle\ell'\neq\ell}^{\#\CI_j-\#\ck{\CI}_j}\big|\int_{\ex{Y}}\langle\ex{\chi}\psi_{j,\ell'},\ex{\chi}\psi_{j,\ell}\rangle\big| + \sum_{(k,n)\in\hat{\CI}_j}\big|\int_{\ex{Y}}\langle\hat{\varphi}_{k,n}^{\appr},\ex{\chi}\psi_{j,\ell}\rangle\big| &\leq c_{34}r^{-3} ~,
\end{align}
and for any $(k,n)\in\hat{\CI}_j$,
\begin{align}\label{eqn_glue_16}
\sum_{\ell=1}^{\#\CI_j-\#\ck{\CI}_j}\big|\int_{\ex{Y}}\langle\ex{\chi}\psi_{j,\ell},\hat{\varphi}_{k,n}^{\appr}\rangle\big| + \sum_{\scriptstyle(k',n')\in\hat{\CI}_j\atop\scriptstyle(k',n')\neq(k,n)}\big|\int_{\ex{Y}}\langle\hat{\varphi}_{k',n'}^{\appr},\hat{\varphi}_{k,n}^{\appr}\rangle\big| &\leq c_{34}r^{-\frac{5}{2}} ~.
\end{align}

\smallskip
(\emph{Step 4: condition} (iii) \emph{of Lemma \ref{lem_glue_04}})\;
For any $1\leq\ell\leq\#\CI_j-\#\ck{\CI}_j$, it follows from Lemma \ref{lem_glue_02}(ii) and Corollary \ref{cor_glue_01}(i) that
\begin{align}\label{eqn_glue_17}
\int_{\ex{Y}}|(\ex{D}_r - \mu_{j,\ell})(\ex{\chi}\psi_{j,\ell})|^2 &\leq 2\int_{Y}|(D_r - \mu_{j,\ell})\psi_{j,\ell}|^2 + 2\int_Y|\dd\ex{\chi}|^2|\psi_{j,\ell}|^2 \notag \\
&\leq c_{35}r^{-6} ~.
\end{align}
For any $(k,n)\in\hat{\CI}_j$, it follows from (\ref{eqn_hat_Dirac_01}) and Proposition \ref{prop_Dirac_hat_01}(iii) that
\begin{align}\label{eqn_glue_18}
\int_{\ex{Y}}|(\ex{D}_r - \frac{r}{2} + \frac{k}{V})(\hat{\varphi}_{k,n}^{\appr})|^2 &\leq c_{36}r^{-6} ~.
\end{align}

\smallskip
(\emph{Step 5: condition} (iv) \emph{of Lemma \ref{lem_glue_04}})\;
Lemma \ref{lem_glue_01}(i) implies that $\oh\leq\nu_{j+1}-\nu_j\leq2$ for any $-[\oh r^\oh]<j<[\oh r^\oh]-1$.  For any $\mu\in(\nu_j,\nu_{j+1})$, we write
\begin{align*}
(\ex{D}_r - \mu)(\ex{\chi}\psi_{j,\ell}) &= (\ex{D}_r - \mu_{j,\ell})(\ex{\chi}\psi_{j,\ell}) - (\mu - \mu_{j,\ell})(\ex{\chi}\psi_{j,\ell}) \\
&= \ex{\chi}\psi_{j,\ell}^{\err} + \cl(\dd\ex{\chi})\psi_{j,\ell} - (\mu - \mu_{j,\ell})(\ex{\chi}\psi_{j,\ell})  ~, \\
(\ex{D}_r - \mu)(\hat{\varphi}_{k,n}^{\appr}) &= (\ex{D}_r - \frac{r}{2} + \frac{k}{V})(\hat{\varphi}_{k,n}^{\appr}) - (\mu - \frac{r}{2} + \frac{k}{V})\hat{\varphi}_{k,n}^{\appr}
\end{align*}
where $\psi_{j,\ell}^{\err}$ is the error term given by Lemma \ref{lem_glue_02}(ii).

There are four cases of $L^2$-inner products between them.
\begin{itemize}
\item For any $\ell\neq\ell'$, Lemma \ref{lem_glue_02}(ii) says that $\int_{\ck{S}}\langle\psi_{j,\ell},\psi_{j,\ell'}^{\err}\rangle = 0$.  It follows that
$$    \int_{\ex{Y}}\langle\ex{\chi}\psi_{j,\ell},\ex{\chi}\psi_{j,\ell'}^{\err}\rangle = \int_{\ck{S}}(1-\ex{\chi}^2)\langle\psi_{j,\ell},\psi_{j,\ell'}^{\err}\rangle ~.    $$
By this trick, all the terms involving $\psi_{j,\ell}$ or $\psi_{j,\ell'}$ are integrated over $\{\rho\leq1-40\delta\}$.  According to Lemma \ref{lem_glue_02}(ii) and Corollary \ref{cor_glue_01},
\begin{align*}
\Big|\int_{\ex{Y}}\langle(\ex{D}_r - \mu)(\ex{\chi}\psi_{j,\ell}),(\ex{D}_r - \mu)(\ex{\chi}\psi_{j,\ell'})\rangle\Big| &\leq c_{37} r^{-6} ~.
\end{align*}
\item If $(1+47\delta)k < (n+[r])V$, the supports of $(\ex{D}_r - \mu)(\ex{\chi}\psi_{j,\ell})$ and $(\ex{D}_r - \mu)(\hat{\varphi}_{k,n}^{\appr})$ are disjoint to each other.  As in step 3, the $L^2$-inner product vanishes.
\item If $(1+47\delta)k \geq (n+[r])V > k$, Proposition \ref{prop_Dirac_hat_01}(iii) says that $\int_{\ex{Y}}|(\ex{D}_r - \frac{r}{2} + \frac{k}{V})(\hat{\varphi}_{k,n}^{\appr})|^2\leq ce^{-\frac{r}{c}}$.  After integration by parts, the $L^2$-inner product between $(\ex{D}_r - \mu)(\ex{\chi}\psi_{j,\ell})$ and $(\ex{D}_r - \mu)(\hat{\varphi}_{k,n}^{\appr})$ is equal to
\begin{align*}
&\int_{\ex{Y}}\langle(\ex{D}_r - \mu_{j,\ell})(\ex{\chi}\psi_{j,\ell}), (\ex{D}_r - \frac{r}{2} + \frac{k}{V})\hat{\varphi}_{k,n}^{\appr}\rangle + (\mu-\frac{r}{2}+\frac{k}{V})^2\int_{\ex{Y}}\langle\ex{\chi}\psi_{j,\ell},\hat{\varphi}_{k,n}^{\appr}\rangle\\
&\quad - (2\mu-\mu_{j,\ell}-\frac{r}{2}+\frac{k}{V})\int_{\ex{Y}}\langle\ex{\chi}\psi_{j,\ell},(\ex{D}_r - \frac{r}{2} + \frac{k}{V})\hat{\varphi}_{k,n}^{\appr}\rangle  ~.
\end{align*}
According to Lemma \ref{lem_glue_02}(ii) and the estimate in step 3,
\begin{align*}
\Big|\int_{\ex{Y}}\langle(\ex{D}_r - \mu)(\ex{\chi}\psi_{j,\ell}),(\ex{D}_r - \mu)(\hat{\varphi}_{k,n}^{\appr})\rangle\Big| &\leq c_{38} r^{-4} ~.
\end{align*}
\item If $(k,n)\neq(k',n')$, $(\ex{D}_r - \mu)(\hat{\varphi}_{k,n}^{\appr})$ and $(\ex{D}_r - \mu)(\hat{\varphi}_{k',n'}^{\appr})$ have different Fourier frequencies.  It follows that the $L^2$-inner product vanishes.
\end{itemize}
The above estimate and (\ref{eqn_glue_14}) imply that for any $\ell\in\{1,\cdots,\#\CI_j-\#\ck{\CI}_j\}$,
\begin{align}\label{eqn_glue_20}\begin{split}
&\sum_{\scriptstyle\ell'=1\atop\scriptstyle\ell'\neq\ell}^{\#\CI_j-\#\ck{\CI}_j}\big|\int_{\ex{Y}}\langle(\ex{D}_r - \mu)(\ex{\chi}\psi_{j,\ell'}),(\ex{D}_r - \mu)(\ex{\chi}\psi_{j,\ell})\rangle\big| \\
&\qquad + \sum_{(k,n)\in\hat{\CI}_j}\big|\int_{\ex{Y}}\langle(\ex{D}_r - \mu)(\hat{\varphi}_{k,n}^{\appr}),(\ex{D}_r - \mu)(\ex{\chi}\psi_{j,\ell})\rangle\big| \leq c_{39}r^{-3} ~,
\end{split}\end{align}
and for any $(k,n)\in\hat{\CI}_j$,
\begin{align}\label{eqn_glue_21}\begin{split}
&\sum_{\ell=1}^{\#\CI_j-\#\ck{\CI}_j}\big|\int_{\ex{Y}}\langle(\ex{D}_r - \mu)(\ex{\chi}\psi_{j,\ell}),(\ex{D}_r - \mu)(\hat{\varphi}_{k,n}^{\appr})\rangle\big| \\
&\qquad + \sum_{\scriptstyle(k',n')\in\hat{\CI}_j\atop\scriptstyle(k',n')\neq(k,n)}\big|\int_{\ex{Y}}\langle(\ex{D}_r - \mu)(\hat{\varphi}_{k',n'}^{\appr}),(\ex{D}_r - \mu)(\hat{\varphi}_{k,n}^{\appr})\rangle\big| \leq c_{39}r^{-\frac{5}{2}} ~.
\end{split}\end{align}

\smallskip
(\emph{Step 6: apply Lemma \ref{lem_glue_04}})\;
Let $c_{40} = \max\{c_{34},c_{39}\}$.  Set $\epsilon_1 = 4c_{40}r^{-\frac{5}{2}}$ and $\epsilon_2 = c_{40}r^{-\frac{5}{2}}$.  It follows from (\ref{eqn_glue_12}), (\ref{eqn_glue_13}), (\ref{eqn_glue_15}), (\ref{eqn_glue_16}), (\ref{eqn_glue_17}), (\ref{eqn_glue_18}), (\ref{eqn_glue_20}) and (\ref{eqn_glue_21}) that the approximate eigensections and eigenvalues satisfy the conditions of Lemma \ref{lem_glue_04}.  Therefore, $(\ex{Y},\ex{D}_r)$ admits $(\#\CI_j-\#\ck{\CI}_j)+\#\hat{\CI}_j$ eigenvalues within
$$    (\nu_j - 8\sqrt{c_{40}}r^{-\frac{5}{4}}, \nu_{j+1} + 8\sqrt{c_{40}}r^{-\frac{5}{4}}) ~.    $$
With the help of Lemma \ref{lem_glue_01}(ii), these eigenvalues actually belongs to $(\nu_j + c_2r^{-1}, \nu_{j+1} - c_2r^{-1})$.  It follows that $(\#\CI_j-\#\ck{\CI}_j)+\#\hat{\CI}_j \leq \#\ex{\CI}_j$.  This completes the proof of Proposition \ref{prop_glue_05}.
\end{proof}

\subsection{Proof of Theorem \ref{thm_glue_01}}
With Proposition \ref{prop_glue_05}, it remains to show that
\begin{align}\label{eqn_glue_22}    (\#\ex{\CI}_j-\#\hat{\CI}_j)+\#\ck{\CI}_j \leq \#{\CI}_j ~.    \end{align}
Note that Proposition \ref{prop_glue_05} also implies that the spectral gap of $(Y,D_r)$ in Lemma \ref{lem_glue_01}(iii) is in fact of order $r^{-1}$.  With this understood, (\ref{eqn_glue_22}) can be proved by the same gluing construction.  Since the argument is completely parallel, we will not duplicate it.

\section{Proof of Theorem \ref{thm_main_01}}\label{sec_main_thm}
The purpose of this section is to prove Theorem \ref{thm_main_01}.  With the help of Theorem \ref{thm_glue_01}, the spectral asymmetry of $(Y,D_r)$ can be related to the spectral asymmetries of $(\ex{Y},\ex{D}_r)$, $(\ck{S},\ck{D}_r)$ and $(\hat{S},\hat{D}_r)$.  Since we understand the spectrum of these three models pretty well, their spectral asymmetries are more tractable.

\begin{defn}
For any $r$ greater than the constants of Proposition \ref{prop_Dirac_ck_01} and Proposition \ref{prop_Dirac_hat_01}, introduce the following set of orthonormal eigensections.
\begin{enumerate}
\item Let $\ex{\CV}_r$ be the set of orthonormal eigensections of $\ex{D}_r$ whose eigenvalue belongs to $(-\frac{1}{3}r^\oh,\frac{1}{3}r^\oh)$.  $\ex{\CV}_r^\pm$ is the subset of $\ex{\CV}_r$ in which the corresponding eigenvalue is positive (negative, respectively).
\item Let $\ck{\CV}_r$ be the set of $\ck{\varphi}_{k,m}$ (given by Proposition \ref{prop_Dirac_ck_01}) with
$$ k<mV $$
and whose eigenvalue of $\ck{D}_r$ belongs to $(-\frac{1}{3}r^\oh,\frac{1}{3}r^\oh)$.  $\ck{\CV}_r^\pm$ is the subset of $\ck{\CV}_r$ in which the corresponding eigenvalue is positive (negative, respectively).
\item Let $\hat{\CV}_r$ be the set of $\hat{\varphi}_{k,n}$ (given by Proposition \ref{prop_Dirac_hat_00} and Proposition \ref{prop_Dirac_hat_01} and) with
\begin{align*}
|\frac{r}{2}-\frac{k}{V}|<\frac{1}{3}r^\oh  \quad~,\qquad  \frac{k}{V}-[r]<n\leq0 ~.
\end{align*}
$\hat{\CV}_r^\pm$ is the subset of $\hat{\CV}_r$ in which $\frac{r}{2}-\frac{k}{V}$ is positive (negative, respectively).
\end{enumerate}
\end{defn}

\begin{defn}
With these sets of eigensections, let $\dot{\eta}(\ex{\CV}_r)$ be (\ref{eqn_eta_00}) with the index set replaced by $\ex{\CV}_r^+$ and $\ex{\CV}_r^-$, and $\lambda_\psi$ replaced by the corresponding eigenvalue of $\ex{D}_r$.  The functions $\ddot{\eta}(\ex{\CV}_r)$, $\dot{\eta}(\ck{\CV}_r)$, $\ddot{\eta}(\ck{\CV}_r)$, $\dot{\eta}(\hat{\CV}_r)$ and $\ddot{\eta}(\hat{\CV}_r)$ are defined in the same way.
\end{defn}

\subsection{Compare the eta functions}
The first step is to estimate the difference between $\dot{\eta}(r)$, $\ddot{\eta}(r)$ and the corresponding eta-functions of the models.

\begin{lem}\label{lem_final_01}
There exists a constant $c_1$ determined by $a$, $\dd s^2$ and $A_E$ such that
\begin{align*}
\big|\, \dot{\eta}(r) - \dot{\eta}(\ex{\CV}_r) - \dot{\eta}(\ck{\CV}_r) + \dot{\eta}(\hat{\CV}_r) \,\big| &\leq c_1 r \qquad\text{and} \\
\big|\, \ddot{\eta}(r) - \ddot{\eta}(\ex{\CV}_r) - \ddot{\eta}(\ck{\CV}_r) + \ddot{\eta}(\hat{\CV}_r) \,\big| &\leq c_1 \log r
\end{align*}
for any $r\geq c_1$
\end{lem}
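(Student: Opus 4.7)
The plan is to compare eigenvalue contributions term by term using the partition $\{\nu_j\}$ from Lemma \ref{lem_glue_01} together with the counting identity of Theorem \ref{thm_glue_01}. First I would rewrite $\dot\eta$ in a form that separates the spectral asymmetry from the point distribution. Setting $R=\frac{1}{3}r^{\oh}$, $\alpha=20r^{-1}\log r$, and $H(\lambda)=\int_0^\lambda e^{-\alpha u^2}\,\dd u$ (an odd smooth function with $|H'|\leq 1$), a direct calculation using $H(0)=0$ gives
\begin{align*}
\dot\eta(r) = r^{-\oh}(\log r)^{\oh}\Big[\,H(R)\big(\#\CV_r^+ - \#\CV_r^-\big) - \sum_{\psi\in\CV_r} H(\lambda_\psi)\,\Big],
\end{align*}
and similar formulas hold for $\dot\eta(\ex\CV_r),\dot\eta(\ck\CV_r),\dot\eta(\hat\CV_r)$. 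Accordingly, the alternating combination $\Delta\dot\eta := \dot\eta(r)-\dot\eta(\ex\CV_r)-\dot\eta(\ck\CV_r)+\dot\eta(\hat\CV_r)$ splits into a count piece $H(R)\cdot\Delta_{+/-}$ and a sum piece $\Delta_H$, and I would bound each by $O(r^{3/2}/(\log r)^{\oh})$; multiplying by the prefactor $r^{-\oh}(\log r)^{\oh}$ yields the desired $O(r)$.

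Next I would partition $(-R,R)$ by $\{(\nu_j,\nu_{j+1})\}$ and apply Theorem \ref{thm_glue_01}, which guarantees $\#\CI_j + \#\hat\CI_j = \#\ex\CI_j + \#\ck\CI_j$ on every interior interval. Approximating $H$ on $(\nu_j,\nu_{j+1})$ by its value at a midpoint $\mu_j$ makes the leading term cancel, leaving a residual bounded by $4c_7 r\cdot\sup_{u\in(\nu_j,\nu_{j+1})}e^{-\alpha u^2}\cdot|\nu_{j+1}-\nu_j|$. Since $|\nu_j-j|\leq\frac{1}{10}$, the Riemann sum $\sum_j e^{-\alpha\nu_j^2}$ is controlled by $C\int_\BR e^{-\alpha u^2/2}\,\dd u = O((r/\log r)^{\oh})$, so $|\Delta_H| = O(r^{3/2}/(\log r)^{\oh})$. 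For the count piece, every interval that does not contain $0$ has all of its eigenvalues of a single sign, so the count identity immediately promotes to a signed identity and its contribution to $\Delta_{+/-}$ vanishes. At most one interval contains $0$ in its interior, and there $|\Delta_{+/-}|$ is crudely bounded by the total eigenvalue count $\leq 4c_7 r$; combined with $H(R) = O((r/\log r)^{\oh})$ this gives $H(R)|\Delta_{+/-}| = O(r^{3/2}/(\log r)^{\oh})$. Boundary intervals that only partially intersect $(-R,R)$ are handled by restricting to the intersection; the eigenvalues excluded satisfy $|\lambda|\gtrsim R$, and the Gaussian factor $e^{-\alpha R^2}=r^{-20/9}$ renders their contribution negligible.

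The argument for $\ddot\eta$ is analogous and slightly easier: the integrand $G(\lambda) := \lambda e^{-\alpha\lambda^2}$ is smooth and odd, so no count piece appears. The per-interval error is controlled by $\int_{\nu_j}^{\nu_{j+1}}|G'(u)|\,\dd u$, and $\sum_j\int_{\nu_j}^{\nu_{j+1}}|G'|\,\dd u\leq\int_\BR|G'(u)|\,\dd u = O((r/\log r)^{\oh})$. This gives a total error $O(r^{3/2}/(\log r)^{\oh})$, and the prefactor $r^{-3/2}\log r$ produces $O((\log r)^{\oh})\leq c_1\log r$. The main obstacle is the interval containing $0$ for $\dot\eta$: positive and negative eigenvalue counts need not align across the four operators, and this asymmetry is multiplied by the large factor $H(R)$. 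The saving grace is that the mismatch is confined to a single interval with at most $O(r)$ eigenvalues, so the net contribution remains $O(r)$ after the prefactor.
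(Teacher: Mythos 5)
Your proposal is correct and follows essentially the same strategy as the paper: partition $(-R,R)$ by the sequence $\{\nu_j\}$ from Lemma \ref{lem_glue_01}, invoke the counting identity $\#\CI_j+\#\hat{\CI}_j=\#\ex{\CI}_j+\#\ck{\CI}_j$ of Theorem \ref{thm_glue_01} to cancel the leading contribution on each interval, and bound the residual via a mean-value estimate against the $\leq c_7 r$ per-interval count, with a Riemann-sum comparison to $\int e^{-\alpha u^2}\dd u=O((r/\log r)^{\oh})$. The paper does this directly on the original integral form (for $\ddot{\eta}$ it even gets away with the cruder $|x e^{-\alpha x^2}-ye^{-\alpha y^2}|\leq|x-y|$, no Gaussian decay needed), whereas your rewrite of $\dot{\eta}$ into $H(R)(\#\CV_r^+-\#\CV_r^-)-\sum H(\lambda_\psi)$ is a cleaner decomposition that isolates the spectral-asymmetry count from the eigenvalue distribution; this is not in the paper but is a useful organizational device, and your observation that on intervals away from $0$ the unsigned counting identity promotes to a signed one is the correct reason the count piece is controlled. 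One small imprecision: in the $H$-decomposed form, the individual boundary pieces $H(R)\Delta_{+/-}$ and $\Delta_H$ are each $O(r^{3/2}(\log r)^{-\oh})$ at the endpoints of $(-R,R)$, not ``negligible''; the Gaussian suppression $e^{-\alpha R^2}=r^{-20/9}$ you invoke applies to the combined integral $\int_{\lambda_\psi}^{R}e^{-\alpha u^2}\dd u$ in the original form, not to $H(R)$ and $H(\lambda_\psi)$ separately. Either way the final estimate $O(r)$ holds, so the gap is only expository.
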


\begin{proof}
Let $c_2$ be the constant given by Theorem \ref{thm_glue_01}.  For any $r\geq c_2$, let $\{\nu_j~:~-[\oh r^\oh]<j<[\oh r^\oh]\}$ be the sequence given by Lemma \ref{lem_glue_01}.  The corresponding index sets defined by Definition \ref{defn_glue_01} satisfy
\begin{align}\label{eqn_final_01}
\#\CI_j + \#\hat{\CI}_j = \#\ex{\CI}_j + \#\ck{\CI}_j \leq c_3r
\end{align}
for any $r\geq c_2$ and $-[\oh r^\oh]<j<[\oh r^\oh]-1$. Let
\begin{align*}
J^- &= \min\{j~|~\nu_j>-\frac{1}{3}r^\oh\}  ~,  &J^+ &= \max\{j~|~\nu_j<\frac{1}{3}r^\oh\} ~, \\
J^-_o &= \max\{j~|~\nu_j<0\} \qquad\text{and}  &J^+_o &= \min\{j~|~\nu_j>0\} ~.
\end{align*}

\smallskip
(\emph{Estimate $\ddot{\eta}(r)$})\;
The function $\ddot{\eta}(r)$ can be written as
\begin{align*}  r^{-\frac{3}{2}}\log r\Big(\sum_{j=J_-}^{J^-_o}\sum_{\lambda_\psi\in(\nu_{j-1},\nu_j)}(\lambda_\psi e^{-20(r^{-1}\log r)\lambda_\psi^2}) + \sum_{j=J_o^+}^{J^+}\sum_{\lambda_\psi\in(\nu_j,\nu_{j+1})}(\lambda_\psi e^{-20(r^{-1}\log r)\lambda_\psi^2}) \Big)  \end{align*}
up to an error term $c_4 r^{-\oh}\log r$, which governs the contribution near $\lambda=0$ and near $\lambda=\pm\frac{1}{3}r^\oh$.  By the mean value theorem,
\begin{align}\label{eqn_final_03}
\big| xe^{-20(r^{-1}\log r)x^2} - ye^{-20(r^{-1}\log r)y^2} \big| &\leq |x-y|
\end{align}
for any non-negative $x$ and $y$.  With (\ref{eqn_final_01}) and (\ref{eqn_final_03}),
\begin{align*}
\big|\ddot{\eta}(r) + \ddot{\eta}(\hat{\CV}_r) - \ddot{\eta}(\ex{\CV}_r) - \ddot{\eta}(\ck{\CV}_r)\big|
&\leq  r^{-\frac{3}{2}}\log r \, \big( c_4 r + 2(c_3r)(\frac{1}{3}r^\oh) \big) \leq c_5 \log r ~.
\end{align*}

\smallskip
(\emph{Estimate $\dot{\eta}(r)$})\;
By the same cancellation trick,
\begin{align*}
&\,\big|\dot{\eta}(r) + \dot{\eta}(\hat{\CV}_r) - \dot{\eta}(\ex{\CV}_r) - \dot{\eta}(\ck{\CV}_r)\big| \\
\leq &\, c_6 r^{-\oh}(\log r)^\oh \, \big( r^\frac{3}{2}(\log r)^{-\oh} + \sum_{j=J^-}^{J^-_o} e^{-20(r^{-1}\log r)\nu_j^2} + \sum_{j=J_o^+}^{J^+} e^{-20(r^{-1}\log r)\nu_j^2} \big) \\
\leq &\, c_7 r^{-\oh}(\log r)^\oh \, \big( r^\frac{3}{2}(\log r)^{-\oh} + \int_0^\infty e^{-20(r^{-1}\log r)s^2}\dd s \big)
\leq c_8 r ~.
\end{align*}
This completes the proof of the lemma.
\end{proof}

\subsection{Eta functions of the mapping torus}
The distribution of small eigenvalues of $(\ex{Y},\ex{D}_r)$ is described by Theorem \ref{thm_Vafa_Witten}.  It leads to the following lemma.

\begin{lem}\label{lem_final_02}
There exists a constant $c_{10}$ determined by $\ex{a}$, $\ex{\omega}$, $\dd s^2$ and $\aeE$ such that
\begin{align*}
|\,\dot{\eta}(\ex{\CV}_r)\,| &\leq c_{10} r(\log r)^\oh   &\text{and}&   &|\,\ddot{\eta}(\ex{\CV}_r)\,| &\leq c_{10} \log r
\end{align*}
for any $r\geq c_{10}$.
\end{lem}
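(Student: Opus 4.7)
The plan is to convert both eta-functions into integrals of a single spectral-asymmetry function weighted by the Gaussian, and then to exploit the near-uniform distribution of the spectrum of $\ex{D}_r$ given by Theorem \ref{thm_Vafa_Witten}. For $u\in(0,\ot r^\oh)$, let
\begin{align*}
A(u) = \#\{\psi\in\ex{\CV}_r^+ : \lambda_\psi\leq u\} - \#\{\psi\in\ex{\CV}_r^- : \lambda_\psi\geq -u\}.
\end{align*}
By Fubini (swapping sum and integral in (\ref{eqn_eta_00})),
\begin{align*}
\dot{\eta}(\ex{\CV}_r) = r^{-\oh}(\log r)^\oh \int_0^{\frac{1}{3}r^\oh} e^{-20(r^{-1}\log r)u^2}\,A(u)\,\dd u ~,
\end{align*}
and a Stieltjes integration by parts applied to (\ref{eqn_eta_01}), together with the oddness of $u\mapsto ue^{-20(r^{-1}\log r)u^2}$ in the split $\ex{\CV}_r = \ex{\CV}_r^+\cup\ex{\CV}_r^-$, yields
\begin{align*}
\ddot{\eta}(\ex{\CV}_r) = -(r^{-\frac{3}{2}}\log r)\int_0^{\frac{1}{3}r^\oh} \frac{\dd}{\dd u}\!\big(u e^{-20(r^{-1}\log r)u^2}\big)\,A(u)\,\dd u
\end{align*}
up to a boundary term at $u=\frac{1}{3}r^\oh$ which is $O(r^{1-20/9})$ and therefore negligible.

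The core estimate is the uniform bound $|A(u)|\leq c_3\,r$ on $[0,\oh r^\oh]$. Theorem \ref{thm_Vafa_Witten} partitions the spectrum of $\ex{D}_r$ in the window $|\lambda|\leq\oh r^\oh$ into $\fj = r\int_{\ex Y}\dd\theta\wedge\ex{\omega} + O(1)$ residue classes $\{\lambda_{j_0+k\fj}\}_{k\in\BZ}$, $j_0=1,\ldots,\fj$. Iterating the one-step bound $|\lambda_{j+\fj} - \lambda_j - \frac{1}{V}|\leq c_2 r^{-\oh}$ shows that each class is an arithmetic progression of common difference $1/V$ with accumulated error at most $O(V\cdot r^\oh\cdot r^{-\oh}) = O(1)$. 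For an exact arithmetic progression of spacing $1/V$, the quantity $\#(\mathrm{AP}\cap(0,u]) - \#(\mathrm{AP}\cap[-u,0))$ is bounded by $1$ in absolute value, and an $O(1)$ positional drift can alter the counts on $(0,u]$ and $[-u,0)$ by at most $O(1)$. Summing over the $\fj = O(r)$ families produces $|A(u)|\leq c_3\,r$.

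The conclusion is then a Gaussian integration: with the substitution $v=u\sqrt{20\,r^{-1}\log r}$,
\begin{align*}
\int_0^{\infty} e^{-20(r^{-1}\log r)u^2}\,\dd u = O\!\big((r/\log r)^{\oh}\big),\quad
\int_0^{\infty}\!\Big|\frac{\dd}{\dd u}\!\big(u e^{-20(r^{-1}\log r)u^2}\big)\Big|\,\dd u = O\!\big((r/\log r)^\oh\big).
\end{align*}
Combining with the asymmetry bound yields $|\dot{\eta}(\ex{\CV}_r)|\leq C r^{-\oh}(\log r)^\oh\cdot r\cdot (r/\log r)^\oh = Cr$ and $|\ddot{\eta}(\ex{\CV}_r)|\leq Cr^{-\frac{3}{2}}\log r\cdot r\cdot (r/\log r)^\oh = C(\log r)^\oh$, both comfortably within the stated bounds $c_{10}r(\log r)^\oh$ and $c_{10}\log r$.

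The main obstacle is controlling the constant in $|A(u)|\leq c_3\,r$: the $O(r^{-\oh})$ error per iteration is tight, and one must verify that after $O(r^\oh)$ iterations the accumulated positional drift remains $O(1)$ per family, and that no pathological alignment of the $\fj$ families near $0$ causes the per-family asymmetries of size $O(1)$ to add coherently beyond the $O(\fj) = O(r)$ budget.
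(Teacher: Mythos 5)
Your argument is correct, but it takes a genuinely different route from the paper. You repackage both eta functions as integrals of the counting function $A(u)=\#\{\psi\in\ex\CV_r^+:\lambda_\psi\le u\}-\#\{\psi\in\ex\CV_r^-:\lambda_\psi\ge -u\}$ against a Gaussian weight (respectively the derivative of $ue^{-cu^2}$), and the whole lemma then reduces to the uniform bound $|A(u)|\le Cr$, which you extract from the near-arithmetic-progression structure of Theorem \ref{thm_Vafa_Witten}: each of the $\fj=O(r)$ residue classes $\{\lambda_{j_0+k\fj}\}_k$ is an AP with step $1/V$ and $O(1)$ total drift, hence contributes $O(1)$ to the asymmetry. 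The paper instead pairs $\lambda_{j+\fj(k-1)}$ with $\lambda_{1-j-\fj(k-1)}$ and applies the mean-value bound (\ref{eqn_final_03}) termwise, then sums explicitly; no counting function appears. The two are close in spirit (both are cancellation arguments driven entirely by Theorem \ref{thm_Vafa_Witten}), but your formulation isolates the relevant spectral quantity more cleanly and, as a by-product, yields the slightly sharper $|\ddot\eta(\ex\CV_r)|\lesssim(\log r)^{1/2}$ (the paper records $\log r$). One small point worth making explicit: your per-class bound $|A_{j_0}(u)|\le C$ quietly uses that the ``anchor'' $\lambda_1$ of the normalization is $O(1)$, so that $\lambda_{j_0}$ and $\lambda_{j_0-\fj}$ are bounded; the paper supplies this via the remark $\lambda_1\le 2/V$, deduced by gluing an eigensection from Proposition \ref{prop_Dirac_hat_00}, and you should invoke the same fact. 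The worry you raise at the end about coherent addition of per-family asymmetries is unfounded: the budget $O(\fj)=O(r)$ already allows fully coherent addition, which is exactly what your bound uses, so nothing beyond the per-class $O(1)$ estimate is needed.
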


\begin{proof}
Let $\{\lambda_j\}_{j\in\BZ}$ be the spectrum of $\ex{D}_r$, which is arranged in ascending order and is normalized so that $\lambda_1$ is the smallest non-negative eigenvalue.  Theorem \ref{thm_Vafa_Witten} finds constants $c_{11}>0$, $c_{12}$ and $c_{13}$ such that
\begin{align}\label{eqn_final_02}
\big|\lambda_{j+c_{11}r+c_{12}} - \lambda_j - \frac{1}{V}\big| &\leq c_{13} r^{-\oh}
\end{align}
for any $r\geq c_{13}$ and $|\lambda_j|\leq\oh r^\oh$.  Denote $c_{11} r+ c_{12}$ by $\fj$.

Note that $\lambda_1\leq \frac{2}{V}$.  This can be seen by gluing a solution from Proposition \ref{prop_Dirac_hat_00}.  It follows from (\ref{eqn_final_02}) that $\lambda_0\geq-\frac{1}{V}$.  Let
\begin{align*}
\fk = \max\{k\in\BN ~|~ \lambda_{1+k\fj}<\frac{1}{3}r^\oh \text{ and }\lambda_{-k\fj}>-\frac{1}{3}r^\oh \} ~.
\end{align*}
The estimate (\ref{eqn_final_02}) also implies that
\begin{align}
\big| \fk - \frac{V}{3}r^\oh \big| &\leq c_{14} ~,
&\lambda_{1+\fk\fj} &\geq \frac{1}{3}r^{\oh} - c_{14} &\text{and}&
&\lambda_{-\fk\fj} &\leq -\frac{1}{3}r^\oh + c_{14}
\end{align}
for some constant $c_{14}$.

\smallskip
(\emph{Estimate $\ddot{\eta}(\ex{\CV}_r)$})\;
With this understood, $\ddot{\eta}(\ex{\CV}_r)$ is equal to
\begin{align*}
&r^{-\frac{3}{2}}\log r \, \sum_{k=1}^{\fk} \sum_{j=1}^{\fj}\Big( \lambda_{j+\fj(k-1)}\exp\big({-20(r^{-1}\log r)\lambda_{j+\fj(k-1)}^2}\big) \\ 
&\qquad\qquad\qquad\quad + \lambda_{1-j-\fj(k-1)}\exp\big({-20(r^{-1}\log r)\lambda_{1-j-\fj(k-1)}^2}\big) \Big)
\end{align*}
up to an error term of $c_{15}r^{-2}\log r$.  According to (\ref{eqn_final_02}),
\begin{align*}
(\frac{1}{V} - c_{13}r^{-\oh})(k-1)\leq \lambda_{1+\fj(k-1)}\leq &\lambda_{j+\fj(k-1)} \leq \lambda_{1+\fj k} \leq \frac{2}{V} + (\frac{1}{V} + c_{13}r^{-\oh})k \\
-\frac{1}{V} - (\frac{1}{V} + c_{13}r^{-\oh})k \leq \lambda_{-\fj k}\leq &\lambda_{1-j-\fj(k-1)} \leq \lambda_{-\fj(k-1)} \leq -(\frac{1}{V} - c_{13}r^{-\oh})(k-1)
\end{align*}
for any $k\in\{1,\cdots,\fk\}$ and $j\in\{1,\cdots,\fj\}$.  It follows from (\ref{eqn_final_03}) that
\begin{align*}
|\,\ddot{\eta}(\ex{\CV}_r)\,| &\leq c_{15}\big( r^{-2}\log r + r^{-\frac{3}{2}}\log r \, \sum_{k=1}^{\fk} \sum_{j=1}^{\fj}(1+r^{-\oh}k) \big) \\
&\leq c_{16}\log r  ~.
\end{align*}
The latter inequality uses the facts that $\fj = c_{11} r+ c_{12}$ and $\fk\leq\frac{V}{3}r^\oh + c_{14}$.

\smallskip
(\emph{Estimate $\dot{\eta}(\ex{\CV}_r)$})\;
By the same token,
\begin{align*}
|\dot{\eta}(\ex{\CV}_r)| &\leq c_{17}\big( r + r^{-\oh}(\log r)^\oh \,  \sum_{k=1}^{\fk} \sum_{j=1}^{\fj}(1+r^{-\oh}k) \big) \\
&\leq c_{18} r(\log r)^\oh  ~.
\end{align*}
This completes the proof of the lemma.
\end{proof}

\subsection{Eta functions of the local model for the open book}
Proposition \ref{prop_Dirac_ck_01} gives approximations for small eigenvalues of $(\ck{S},\ck{D}_r)$.  It leads to the following lemma.

\begin{lem}\label{lem_final_03}
There exists a constant $c_{20}$ determined by determined by $\ck{a}$ and $\dd s^2$ such that
\begin{align*}
|\,\dot{\eta}(\ck{\CV}_r)\,| &\leq c_{20} r(\log r)^\oh   &\text{and}&   &|\,\ddot{\eta}(\ex{\CV}_r)\,| &\leq c_{20}\log r
\end{align*}
for any $r\geq c_{20}$.
\end{lem}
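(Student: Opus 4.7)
The plan is to estimate $\dot\eta(\ck\CV_r)$ and $\ddot\eta(\ck\CV_r)$ by enumerating eigenvalues via Proposition \ref{prop_Dirac_ck_01}: each eigensection $\ck\varphi_{k,m}\in\ck\CV_r$ corresponds to a unique pair $(k,m)\in\BZ_{\geq 0}\times\BZ$ with $k<mV$, and its eigenvalue is $\lambda_{k,m}=(r-\ck\gamma_{k,m})/2+O(1)$. The strategy mirrors Lemma \ref{lem_final_02}: slice the $(k,m)$-lattice into one-parameter families along which the eigenvalues are approximately uniformly spaced, and exploit an approximate symmetry within each slice to obtain cancellation in the weighted sums defining $\dot\eta$ and $\ddot\eta$.

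I would slice by fixing $m$ and letting $k$ vary over $0\leq k<mV$. Implicit differentiation of $k\ck g(\ck\rho)=m\ck f(\ck\rho)$ together with $\ck\gamma_{k,m}=2k/\ck f(\ck\rho_{k,m})$ yields
\begin{align*}
\ck\gamma_{k+1,m}-\ck\gamma_{k,m}=\frac{-2\ck g'(\ck\rho^\ast)}{(\ck f'\ck g-\ck f\ck g')(\ck\rho^\ast)}
\end{align*}
for some $\ck\rho^\ast$ between $\ck\rho_{k+1,m}$ and $\ck\rho_{k,m}$. By the defining properties of $\ck f$ and $\ck g$, this increment extends to a positive continuous function on $[0,1]$ (with boundary values $1$ at $\ck\rho=0$ and $2/V$ at $\ck\rho=1$), and hence the spacing $|\lambda_{k+1,m}-\lambda_{k,m}|$ is bounded above and below by positive constants uniformly in $(k,m)$. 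For each fixed $m$, let $k_0(m)$ be the value of $k$ at which $|\lambda_{k_0(m),m}|$ is smallest; since the relevant range of $k$ has width $O(r^\oh)$ and $\ck\rho_{k,m}$ drifts by only $O(r^{-\oh})$ across that range, the paired eigenvalues satisfy
\begin{align*}
\lambda_{k_0(m)-j,m}+\lambda_{k_0(m)+j,m}=O(1)+O(j^2/r)
\end{align*}
uniformly in $j$. Applying the mean value theorem to $e^{-20(r^{-1}\log r)u^2}$ (and to $\lambda\mapsto\lambda e^{-20(r^{-1}\log r)\lambda^2}$ for $\ddot\eta$), combined with Gaussian integrability $\sum_j e^{-c(r^{-1}\log r)j^2}=O(r^\oh(\log r)^{-\oh})$, gives per-$m$ contributions of $O((\log r)^\oh)$ to $\dot\eta(\ck\CV_r)$ and $O(r^{-1}\log r)$ to $\ddot\eta(\ck\CV_r)$. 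The admissible range of $m$ has length $O(r)$ (since $\ck\gamma_{k,m}=2m/\ck g$ with $\ck g\in[1,2]$ and $|\ck\gamma_{k,m}-r|\leq r^\oh$), so summing produces the claimed estimates.

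The principal obstacle is the choice of slicing. The naive analogue of Lemma \ref{lem_final_02} would fix $k$ and vary $m$, but this degenerates on the plateau region $\ck\rho\in[1-50\delta,1)$ where $\ck f\equiv V$: there $\partial\ck\gamma/\partial m\equiv 0$, so for fixed $k$ with $\ck\rho_{k,m}$ in the plateau many consecutive $m$'s give identical $\ck\gamma_{k,m}=2k/V$, causing the eigenvalues to collapse into clusters and breaking the uniform-spacing hypothesis. The $m$-fixed slicing chosen above resolves this because $\partial\ck\gamma/\partial k=-2\ck g'/(\ck f'\ck g-\ck f\ck g')$ remains strictly positive throughout $\ck\rho\in(0,1)$, specializing to $2/V$ on the plateau. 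A secondary technical point will be tracking the cumulative $O(j^2/r)$ drift of the eigenvalue spacing across the $O(r^\oh)$ paired indices within each slice, which is handled by the Gaussian decay of the weight function.
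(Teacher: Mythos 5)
Your approach is genuinely different from the paper's. Where the paper approximates the lattice sum $\sum_{(k,m)}(r-\ck\gamma_{k,m})\exp(-5r^{-1}\log r(r-\ck\gamma_{k,m})^2)$ by an integral over a polar-type re-parametrization $k=s(\ck f^2+\ck g^2)^{-1/2}\ck f$, $m=s(\ck f^2+\ck g^2)^{-1/2}\ck g$ (along whose rays $\ck\gamma(s,\rho)=2s(\ck f^2+\ck g^2)^{-1/2}$ is \emph{exactly} linear in $s$) and then integrates by parts, you keep the sum discrete, slice by fixing $m$, and pair eigenvalues symmetrically about the one closest to zero. Your formula $\partial\ck\gamma/\partial k=-2\ck g'/(\ck f'\ck g-\ck f\ck g')$ is correct, your observation that the $k$-fixed slicing degenerates on the plateau (where $\ck f'\equiv0$ forces $\partial\ck\gamma/\partial m\equiv0$) is genuine and resolved by your choice, and the Taylor estimate $\ck\gamma_{k_0-j,m}+\ck\gamma_{k_0+j,m}=2\ck\gamma_{k_0,m}+O(j^2/r)$ is justified by $\partial^2\ck\gamma/\partial k^2=O(1/r)$ (which you can verify: $\partial\ck\rho/\partial k=\ck g\ck f/(k(\ck f'\ck g-\ck f\ck g'))$ with $k\sim r\ck f$, so it is $O(1/r)$, and $\partial_k\ck\gamma$ viewed as a function of $\ck\rho$ is smooth with bounded derivative on $[0,1]$). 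Your approach is more elementary; the paper's buys exact linearity along slices and a cleaner treatment of boundaries because they are built into the integration domain.

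The gap is the treatment of $m$ where the pairing breaks. The set $\ck\CV_r$ enforces $0\le k<mV$, which truncates one side of the pairing. For $m$ in a window of size $O(r^{1/2})$ around $r$, the constraint $k\ge0$ removes the positive eigenvalues; at $m$ slightly above $r$ one has $k_0(m)=0$ and \emph{every} admissible eigenvalue is negative and unpaired, so there is no cancellation at all. Symmetrically, for $m$ near $r/2$ the constraint $k<mV$ (equivalently $\ck\rho_{k,m}<1$) truncates the negative side. For these $m$ your claimed per-$m$ bounds do not hold — the per-$m$ sum for such $m$ is of order $r/\log r$ rather than $O(r^{-1}\log r)$ for the $\ddot\eta$ piece — and the argument as written says nothing about them. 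The contributions turn out to be small after one notes that the first unpaired eigenvalue at $m=r+y$ (resp.\ $m=r/2+x$) sits at distance $\gtrsim|y|$ (resp.\ $\gtrsim|x|$) from the origin, so the Gaussian weight $e^{-cy^2\log r/r}$ suppresses the sum over the $O(r^{1/2})$ offending $m$ down to $O((\log r)^{-1/2})$; but this is a separate estimate that needs to be made. You flagged the plateau obstruction carefully, which is to your credit, but missed that the lattice boundary $\{k=0\}\cup\{k=mV\}$ creates a similar asymmetry that your pairing does not see.
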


\begin{proof}
(\emph{Estimate $\ddot{\eta}(\ck{\CV}_r)$})\;
Due to Proposition \ref{prop_Dirac_ck_01} and (\ref{eqn_final_03}),
\begin{align}\label{eqn_final_04}
|\ddot{\eta}(\ck{\CV}_r)| &\leq \oh (r^{-\frac{3}{2}}\log r) \Big|\sum_{\ck{\varphi}_{k,m}\in\ck{\CV}_r}(r-\ck{\gamma}_{k,m})\exp\big(-{5}r^{-1}\log r(r-\ck{\gamma}_{k,m})^2\big)\Big| + c_{21}\log r ~.
\end{align}
To proceed, note that the $\ck{\gamma}_{k,m}$ has a naturally extension as a smooth function on the right half plane of $(k,m)$.  Consider the re-parametrized polar coordinate on the right half plane:
\begin{align*}
k(s,\rho) &= s\big(\ck{f}^2(\rho) + \ck{g}^2(\rho)\big)^{-\oh}\ck{f}(\rho) ~,
&m(s,\rho) &= s\big(\ck{f}^2(\rho) + \ck{g}^2(\rho)\big)^{-\oh}\ck{g}(\rho)
\end{align*}
where $s\geq0$ and $0\leq\rho\leq 2$.  Note that
$$\dd k\wedge\dd m = \frac{\ck{f}'\ck{g}-\ck{f}\ck{g}'}{\ck{f}^2 + \ck{g}^2}s\dd\rho\wedge\dd s ~.$$
Let
\begin{align*}
\ck{\gamma}(s,\rho) = 2s(\ck{f}^2(\rho) + \ck{g}^2(\rho))^{-\oh} ~.
\end{align*}
A straightforward computation shows that $\ck{\gamma}$ has the following properties:
\begin{itemize}
\item if $(k(s,\rho),m(s,\rho))$ belongs to the lattice $\BZ_{\geq0}\times\BZ$, $\ck{\gamma}(s,\rho)$ is equal to $\ck{\gamma}_{k,m}$ defined by Definition \ref{defn_Dirac_ck_01};
\item for any $k + |m|\geq 1$, $\ck{\gamma}(s,\rho)$ is a smooth function in $(k,m)$;  there exists a constant $c_{19}$ such that $$ |\ck{\gamma}(k,m)-\ck{\gamma}(k',m') |\leq c_{22} $$ provided $|k'-k|+|m'-m|\leq 1$.
\end{itemize}

Let $\rho_V\in(0,2)$ be the unique solution to $V\ck{g}(\rho)-\ck{f}(\rho) = 0$.  It follows from the above discussion and Proposition \ref{prop_Dirac_hat_00} that
\begin{align*}
&\,\Big| \sum_{\ck{\varphi}_{k,m}\in\ck{\CV}_r}(r-\ck{\gamma}_{k,m})\exp(-{5}r^{-1}\log r(r-\ck{\gamma}_{k,m})^2) \\
&\,\quad - \big(\int_0^{\rho_V}\int_{s_-}^{s_+} (r-\ck{\gamma}(s,\rho))\exp(-{5}r^{-1}\log r(r-\ck{\gamma}(s,\rho))^2) \frac{\ck{f}'\ck{g}-\ck{f}\ck{g}'}{\ck{f}^2 + \ck{g}^2}s\dd s\dd\rho \big) \Big| \leq c_{23}r^{\frac{3}{2}}
\end{align*}
where $s_- = \frac{\sqrt{\ck{f}^2+\ck{g}^2}}{2}(r-\frac{2}{3}r^\oh)$ and $s_+ = \frac{\sqrt{\ck{f}^2+\ck{g}^2}}{2}(r+\frac{2}{3}r^\oh)$.  In other words, the summation is equal to the integration up to an error term of order $c_{23}r^{\frac{3}{2}}$.  After integration by parts,
\begin{align*}
&\, \int_{s_-}^{s_+} (r-\ck{\gamma}(s,\rho))\exp(-{5}r^{-1}\log r(r-\ck{\gamma}(s,\rho))^2) s\dd s \\
=&\, \frac{1}{4}(\ck{f}^2+\ck{g}^2)^{-\oh} r (\log r)^{-1} \Big( s\exp\big(-{5}r^{-1}\log r(r-\ck{\gamma}(s,\rho))^2\big)\big|_{s_-}^{s_+} \Big) \\
&\,\quad + \frac{1}{4}(\ck{f}^2+\ck{g}^2)^{-\oh} r (\log r)^{-1}\int_{s_-}^{s_+}\exp\big(-{5}r^{-1}\log r(r-\ck{\gamma}(s,\rho))^2\big) \dd s \\
\leq&\, c_{24}r^{\frac{3}{2}}(\log r)^{-1} ~.
\end{align*}
It follows that
\begin{align*}
\big| \sum_{\ck{\varphi}_{k,m}\in\ck{\CV}_r}(r-\ck{\gamma}_{k,m})\exp(-{5}r^{-1}\log r(r-\ck{\gamma}_{k,m})^2) \big| &< c_{25}r^{\frac{3}{2}} ~.
\end{align*}
By combining this inequality with (\ref{eqn_final_04}), it implies that
\begin{align*}
|\ddot{\eta}(\ck{\CV}_r)| &\leq c_{26}\log r ~.
\end{align*}

\smallskip
(\emph{Estimate $\dot{\eta}(\ck{\CV}_r)$})\;
By the same token, $\dot{\eta}(r)$ is equal to
\begin{align*}
&r^{-\oh}\log r\Big( \int_0^{\rho_V}\int_{s_-}^{s_0}\int_{\oh(r-\ck{\gamma}(s,\rho))}^{\frac{1}{3}r^\oh}e^{-20(r^{-1}\log r)u^2}\frac{\ck{f}'\ck{g}-\ck{f}\ck{g}'}{\ck{f}^2 + \ck{g}^2}s\dd u \dd s\dd\rho \\
&\qquad\qquad - \int_0^{\rho_V}\int_{s_0}^{s_+}\int^{\oh(r-\ck{\gamma}(s,\rho))}_{-\frac{1}{3}r^\oh}e^{-20(r^{-1}\log r)u^2}\frac{\ck{f}'\ck{g}-\ck{f}\ck{g}'}{\ck{f}^2 + \ck{g}^2}s\dd u \dd s\dd\rho \Big)
\end{align*}
up to an error term $c_{27} r(\log r)^\oh$.  Here, $s_0 = \frac{\sqrt{\ck{f}^2+\ck{g}^2}}{2}r$, $s_- = \frac{\sqrt{\ck{f}^2+\ck{g}^2}}{2}(r-\frac{2}{3}r^\oh)$ and $s_+ = \frac{\sqrt{\ck{f}^2+\ck{g}^2}}{2}(r+\frac{2}{3}r^\oh)$.  By changing the order of integration,
\begin{align*}
&\, \int_{s_-}^{s_0}\int_{\oh(r-\ck{\gamma}(s,\rho))}^{\frac{1}{3}r^\oh}e^{-20(r^{-1}\log r)u^2}s\dd u \dd s - \int_{s_0}^{s_+}\int^{\oh(r-\ck{\gamma}(s,\rho))}_{-\frac{1}{3}r^\oh}e^{-20(r^{-1}\log r)u^2}s\dd u \dd s \\
=&\, \int_{0}^{\frac{1}{3}r^\oh}\int_{s_0 - \sqrt{\ck{f}^2+\ck{g}^2}u}^{s_0}e^{-20(r^{-1}\log r)u^2}s\dd s \dd u - \int_{0}^{\frac{1}{3}r^\oh}\int^{s_0 + \sqrt{\ck{f}^2+\ck{g}^2}u}_{s_0}e^{-20(r^{-1}\log r)u^2}s\dd s \dd u \\
=&\, -\frac{\ck{f}^2+\ck{g}^2}{2}\int_0^{\frac{1}{3}r^\oh} u^2 e^{-20(r^{-1}\log r)u^2} \dd u
\geq -c_{28}r^{\frac{3}{2}}(\log r)^{-\frac{3}{2}} ~.
\end{align*}
It follows that
\begin{align*}
|\dot{\eta}(\ck{\CV}_r)| &\leq c_{29}r(\log r)^\oh ~.
\end{align*}
This finishes the proof of the lemma.
\end{proof}

\subsection{Eta functions of the local model for the mapping torus}
The small eigenvalues of $(\hat{S},\hat{D}_r)$ are characterized by Proposition \ref{prop_Dirac_hat_00}.  It leads to the following lemma.

\begin{lem}\label{lem_final_04}
There exists a constant $c_{31}$ determined by $\hat{a}$, $\hat{\omega}$ and $\dd s^2$ such that
\begin{align*}
|\,\dot{\eta}(\hat{\CV}_r)\,| &\leq c_{31}r   &\text{and}&   &|\,\ddot{\eta}(\hat{\CV}_r)\,| &\leq c_{31}\log r
\end{align*}
for any $r\geq c_{31}$.
\end{lem}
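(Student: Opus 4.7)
The plan is to enumerate $\hat{\CV}_r$ explicitly by Proposition \ref{prop_Dirac_hat_00}, reducing both eta-functions to one-dimensional shifted-lattice sums, and then to estimate these sums term by term.  By Proposition \ref{prop_Dirac_hat_00}, every $\hat\varphi_{k,n}\in\hat{\CV}_r$ has eigenvalue $\lambda_k=\tfrac{r}{2}-\tfrac{k}{V}$ (depending only on $k$), and for each admissible $k$ the number of admissible $n$'s (those with $\tfrac{k}{V}-[r]<n\leq 0$) is $N_k=[r]+1-\lceil k/V\rceil=\tfrac{r}{2}+\lambda_k+O(1)$.  Throughout I write $a=20\,r^{-1}\log r$ for the Gaussian scale.

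I will handle $\ddot\eta(\hat{\CV}_r)$ first.  Substituting the enumeration and splitting $N_k=\tfrac{r}{2}+\lambda_k+O(1)$, the $\lambda_k^2\,e^{-a\lambda_k^2}$ piece is even and can be Riemann-approximated by $V\!\int\lambda^2 e^{-a\lambda^2}d\lambda\asymp V a^{-3/2}$; after multiplying by $r^{-3/2}\log r$, this gives $O((\log r)^{-\oh})$.  The $\tfrac{r}{2}\lambda_k e^{-a\lambda_k^2}$ piece is a shifted-lattice sum of a Schwartz odd function, so by Poisson summation on the lattice of spacing $1/V$ the nonzero dual frequencies contribute $O(|m|V\,e^{-(mV)^2/(4a)})=O(e^{-V^2 r/(80\log r)})$ each, which is exponentially small in $r$.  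The $O(1)$ residual contributes at most $O(r^{-\oh}\log r)$ after counting the $O(r^{\oh})$ terms.  This already gives $|\ddot\eta(\hat{\CV}_r)|=O(1)$, well within the required $c\log r$.

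For $\dot\eta(\hat{\CV}_r)$ the same decomposition works, with the incomplete Gaussian
\begin{align*}
G(\lambda)=\begin{cases}\int_{\lambda}^{r^{\oh}/3}e^{-au^2}du,&\lambda>0,\\[2pt] -\int_{-r^{\oh}/3}^{\lambda}e^{-au^2}du,&\lambda<0,\end{cases}
\end{align*}
playing the role of $\lambda e^{-a\lambda^2}$.  The even piece $\sum_k\lambda_k G(\lambda_k)=\sum_k|\lambda_k|J^+(|\lambda_k|)$ approximates to $2V\!\int_0^{r^{\oh}/3}\lambda J^+(\lambda)d\lambda\asymp V a^{-3/2}$ by Fubini and the Gaussian moment computation, contributing $O(r/\log r)$ after the prefactor $r^{-\oh}(\log r)^{\oh}$, which is comfortably below $c r$.

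The hard part is the odd piece $\tfrac{r}{2}\sum_k G(\lambda_k)$: because $G$ has a jump of size $2\!\int_0^{r^{\oh}/3}e^{-au^2}du=O(a^{-\oh})$ at the origin, Poisson summation no longer decays rapidly and a naive bound produces the unacceptable order $r^{3/2}$.  I will handle it by pairing the positive lattice values $p_j=\mu+j/V$ with their reflected negative counterparts $q_j=j/V-\mu$, where $\mu\in(0,1/V)$ denotes the smallest positive $\lambda_k$.  Since $J^+(p_j)-J^+(q_j)=-\int_{q_j}^{p_j}e^{-au^2}du$ is an integral of $e^{-au^2}$ over an interval of length $\leq 2/V$ centered at $j/V$, the full sum $\sum_k G(\lambda_k)=J^+(\mu)+\sum_{j\geq 1}\!\bigl[J^+(p_j)-J^+(q_j)\bigr]$ telescopes into a discrete Riemann-type sum of size $O(a^{-\oh})=O((r/\log r)^{\oh})$.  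Multiplying by $\tfrac{r}{2}$ and by the prefactor $r^{-\oh}(\log r)^{\oh}$ then yields the target $O(r)$ bound; the subdominant $O(1)$ remainder from $N_k$ contributes only $O(r^{\oh})$, completing the estimate.
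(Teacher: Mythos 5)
Your proof is correct and takes essentially the same approach as the paper's: enumerate the spectrum of $\hat D_r$ by Proposition \ref{prop_Dirac_hat_00}, split the multiplicity as $N_k = \frac{r}{2}+\lambda_k+O(1)$, and exploit the near-reflection symmetry of the shifted lattice $\{\lambda_k\}$ about $0$. Your pairing $p_j\leftrightarrow q_j$ for the bulk piece of $\dot\eta$ is precisely the paper's cancellation via (\ref{eqn_final_03}) written in different bookkeeping (the paper indexes by $\lambda_j^\pm$ and $n_j^\pm$ rather than by $N_k = \frac{r}{2}+\lambda_k+O(1)$); note that retaining the Gaussian weight in $|J^+(p_j)-J^+(q_j)|\leq 2\mu\,e^{-aq_j^2}$, rather than the cruder $2\mu$, is what produces the stated $O(r)$ instead of $O(r(\log r)^{\oh})$, and you do this correctly. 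The one genuine departure is using Poisson summation for the bulk piece of $\ddot\eta$, which yields an essentially $O(1)$ bound where the paper's mean-value-theorem pairing gives $O(\log r)$; both suffice. Be aware, though, that your sum runs only over $|\lambda_k|<\tfrac{1}{3}r^{\oh}$, so the Poisson identity is not applied exactly: the error from extending to the full lattice is of size $O(r^{-O(1)})$ (since $e^{-ar/9}=r^{-20/9}$), polynomially rather than exponentially small in $r$, but still more than adequate for the lemma.
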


\begin{proof}
According to Proposition \ref{prop_Dirac_hat_00} and Proposition \ref{prop_Dirac_hat_01}, the eigenvalues of $\hat{D}_r$ on $\hat{\CV}_r$ are
$$ \big\{ \frac{r}{2} - \frac{k}{V} ~\big|~ k\in\BZ\text{ and } |\frac{r}{2} - \frac{k}{V}|<\frac{1}{3}r^\oh \big\} ~, $$
and the multiplicity of $\frac{r}{2} - \frac{k}{V}$ is $[r]-[\frac{k}{V}]$.

In particular, the smallest non-negative eigenvalue happens at $k=[\frac{rV}{2}]$.  Rewrite positive eigenvalues as $\lambda_j^+ = \frac{r}{2} - \frac{1}{V}([\frac{rV}{2}] - j)$ for $j\geq0$, and its multiplicity is $n_j^+ = [r] - [\frac{1}{V}[\frac{rV}{2}] - \frac{j}{V}]$.  Similarly, the negative eigenvalues are $\lambda_j^- = \frac{r}{2} - \frac{1}{V}([\frac{rV}{2}] + j + 1)$ for $j\geq0$, and the multiplicity is $n_j^- = [r] - [\frac{1}{V}[\frac{rV}{2}] + \frac{j + 1}{V}]$.

With the help of (\ref{eqn_final_03}), a cancellation argument shows that
\begin{align*}
\big|\ddot{\eta}(\hat{\CV}_r)\big| &\leq c_{32}(r^{-\frac{3}{2}}\log r) \sum_{j=0}^{[\frac{V}{3}r^\oh]}\big( n_j^- + (n_j^+ - n_j^-)\lambda_j^+ \big)  \\
&\leq c_{33}(r^{-\frac{3}{2}}\log r)  \sum_{j=0}^{[\frac{V}{3}r^\oh]} \Big( \big([r] - [\frac{1}{V}[\frac{rV}{2}] + \frac{j + 1}{V}]\big) + (j+1)\big(\frac{r}{2} - \frac{1}{V}([\frac{rV}{2}] - j)\big) \Big)  \\
&\leq c_{34}\log r ~.
\end{align*}
By the same cancellation argument,
\begin{align*}
\big|\dot{\eta}(\hat{\CV}_r)\big| &\leq c_{35}r^{-\oh}(\log r)^{\oh}\sum_{j=0}^{[\frac{V}{3}r^\oh]} \big( n_j^- + (n_j^+ - n_j^-)\int_0^\infty e^{-20(r^{-1}\log r)u^2}\dd u \big) \\
&\leq c_{36}r^{-\oh}(\log r)^{\oh}\sum_{j=0}^{[\frac{V}{3}r^\oh]} \Big( \big([r] - [\frac{1}{V}[\frac{rV}{2}] + \frac{j + 1}{V}]\big) + r^{\oh}(\log r)^{-\oh}(j+1) \Big) \\
&\leq c_{37}r ~.
\end{align*}
This completes the proof of the lemma.
\end{proof}

\subsection{Proof of Theorem \ref{thm_main_01}}
It follows from the triangle inequality on Lemma \ref{lem_final_01}, Lemma \ref{lem_final_02}, Lemma \ref{lem_final_03} and Lemma \ref{lem_final_04} that
\begin{align*}
|\,\dot{\eta}(r)\,| &\leq c_{40}r(\log r)^\oh   &\text{and}&   &|\,\ddot{\eta}(r)\,| &\leq c_{40}\log r ~.
\end{align*}
This completes the proof of Theorem \ref{thm_main_01}.

\appendix
\section{Dirac Equation on the Local Models}

\subsection{The local model for the open book}\label{subsec_ap1}
Proposition \ref{prop_Dirac_ck_01} was proved in \cite[\S5]{ref_Ts1}.  The purpose of this subsection is to give precise reference for each assertion.

\smallskip
(\emph{Assertion} (i))\;
With respect to the trivialization (\ref{eqn_trivial_ck_01}), the Dirac operator is written down explicitly in \cite[(4.1)]{ref_Ts1}.  The expression is invariant under the $S^1\times S^1$-action in $\theta$ and $t$, and Assertion (i) follows.

\smallskip
(\emph{Assertion} (ii) \emph{on the eigenvalues})\;
The existence, uniqueness and estimate of the eigenvalue $\lambda$ on $\CS_{k,m}$ was proved in \cite[Lemma 5.5]{ref_Ts1}.  Note that it implies that $\ck{\gamma}_{k,m}$ is of the same order as $r$, i.e.\ $\frac{1}{c_1}r\leq\ck{\gamma}_{k,m}\leq c_1 r$ for some constant $c_1>0$.

\smallskip
(\emph{Assertion} (ii) \emph{on the eigensections})\;
When $\ck{\rho}_{k,m}\leq20\delta$ or $\ck{\rho}_{k,m}\geq2-20\delta$, Assertion (ii.a), (ii.b) and (ii.c) were proved in \cite[Proposition 5.10]{ref_Ts1}.

When $20\delta<\ck{\rho}_{k,m}<2-20\delta$, Assertion (ii.a), (ii.b) and (ii.c) were basically proved in \cite[Proposition 5.6]{ref_Ts1}, with a larger error term.  We now explain how to construct the approximate eigensection with the error term claimed by Proposition \ref{prop_Dirac_ck_01}.  The strategy was mentioned in \cite[the paragraph after (5.24)]{ref_Ts1}.

Let $\psi = (\ck{\alpha},\ck{\beta})\in\CS_{k,m}$.  Use separation of variables to write $\ck{\alpha}$ and $\ck{\beta}$ as
\begin{align*}
\ck{\alpha} &= \ck{\alpha}_{k,m}(\rho)e^{i(k\theta+mt)}(2\pi\ck{\Delta})^{-\oh}  &\text{and}&
&\ck{\beta} &= \ck{\beta}_{k,m}(\rho)e^{i((k+1)\theta+t)}(2\pi\ck{\Delta})^{-\oh}
\end{align*}
where $\ck{\Delta}$ is $\oh(\ck{f}'\ck{g}-\ck{f}\ck{g}')$.  The eigensections equation \cite[(5.23)]{ref_Ts1} reads
\begin{align}\label{eqn_a101}\left\{\begin{aligned}
\big(\frac{r}{2}+\frac{k\ck{g}'-m\ck{f}'}{2\ck{\Delta}}\big)\ck{\alpha}_{k,m} - \ck{\beta}'_{k,m} - \big( \frac{k\ck{g}-m\ck{f}}{\ck{\Delta}} + \frac{\ck{\Delta}'}{2\ck{\Delta}}\big)\ck{\beta}_{k,m} &= \lambda\ck{\alpha}_{k,m} ~, \\
\ck{\alpha}'_{k,m} - \big( \frac{k\ck{g}-m\ck{f}}{\ck{\Delta}} + \frac{\ck{\Delta}'}{2\ck{\Delta}}\big)\ck{\alpha}_{k,m} - \big(\frac{r}{2}+\frac{k\ck{g}'-m\ck{f}'}{2\ck{\Delta}} + 1 + \frac{\ck{f}''\ck{g}'-\ck{f}'\ck{g}''}{8\ck{\Delta}}\big)\ck{\beta}_{k,m} &= \lambda\ck{\beta}_{k,m}  ~.
\end{aligned}\right.\end{align}

Consider the Taylor series expansion of the coefficients at $\ck{\rho}_{k,m}$:
\begin{align*}
-\frac{k\ck{g}-m\ck{f}}{\ck{\Delta}} &= \ck{\gamma}_{k,m}x - \sum_{j=2}^7\fr_j x^j - \fR_0(x) x^8 ~, \\
\frac{k\ck{g}'-m\ck{f}'}{2\ck{\Delta}} &= -\frac{\ck{\gamma}_{k,m}}{2} - \sum_{j=2}^7\fr'_j x^j - \fR_1(x) x^8 ~, \\
\frac{\ck{\Delta}'}{2\ck{\Delta}} &= \sum_{j=0}^5 \fe_j x^j + \fE_0(x) x^6 ~, \\
1+\frac{\ck{f}''\ck{g}'-\ck{f}'\ck{g}''}{8\ck{\Delta}} &= \sum_{j=0}^5 \fe'_j x^j + \fE_1(x) x^6
\end{align*}
where $x = \rho - \ck{\rho}_{k,m}$ and the above equations\footnote{These constants and function depend on $k$ and $m$.  We drop the subscripts $k$ and $m$ for simplicity.} holds for $|x|\leq4\delta$.  A direct computation shows that the linear term of $(k\ck{g}'-m\ck{f}')/{2\ck{\Delta}}$ vanishes.  Since $20\delta<\ck{\rho}_{k,m}<2-20\delta$, there exists a constant $c_2>0$ such that
\begin{align}\label{eqn_a105}
|\fe_j|+|\fe'_j| &\leq c_2 ~, &\text{and}&  &|\fE_0(x)| + |\fE_1(x)| &\leq c_2
\end{align} for any $|x|\leq4\delta$.
According to (\ref{eqn_Dirac_ck_01}) and $20\delta<\ck{\rho}_{k,m}<2-20\delta$, there exists a constant $c_3>0$ so that $k + |m|\leq c_3 r$.  It follows that there exists a constant $c_4>0$ such that
\begin{align}\label{eqn_a106}
|\fr_j|+|\fr'_j| &\leq c_4 r ~, &\text{and}&  &|\fR_0(x)| + |\fR_1(x)| &\leq c_4 r
\end{align} for any $|x|\leq4\delta$.

Let $\xi(x)$ be $(\frac{\ck{\gamma}_{k,m}}{\pi})^{\frac{1}{4}}\exp(-\frac{\ck{\gamma}_{k,m}}{2}x^2)$.  Consider the approximate solution for (\ref{eqn_a101})
\begin{align}\label{eqn_a102}\left\{\begin{aligned}
a(x) &= \big(1 + a_1(x) + a_2(x) + \cdots + a_6(x)\big)\xi(x) ~, \\
b(x) &= \big( b_1(x) + b_2(x) + \cdots + b_6(x) \big)\xi(x) ~, \\
\mu &= \frac{r}{2} - \frac{\ck{\gamma}_{k,m}}{2} + \mu_1 + \mu_2 + \cdots + \mu_6
\end{aligned}\right.\end{align}
defined by the following \emph{recursive formulae}:
\begin{align}
-\frac{\dd b_j(x)}{\dd x} + (2\ck{\gamma}_{k,m}x)b_j(x) &= \sum_{i=1}^{j-1} (\fe_{i-1}x^{i-1} + \fr_{i+1}x^{i+1})b_{j-i}(x) \notag \\
&\qquad + \sum_{i=1}^j(\mu_{i} + \fr'_{i+1}x^{i+1})a_{j-i}(x) \label{eqn_a103} ~, \\
\frac{\dd a_j(x)}{\dd x} &= \sum_{i=1}^{j} (\fe_{i-1}x^{i-1} + \fr_{i+1}x^{i+1})a_{j-i}(x) \notag \\
&\qquad + \sum_{i=1}^{j-1}(\mu_{i} + \fe'_{i-1}x^{i-1} - \fr'_{i+1}x^{i+1})b_{j-i}(x) \label{eqn_a104} ~.
\end{align}
To say more,
\begin{itemize}
\item $a_j(x)$, $b_j(x)$ and $\mu_j$ are determined by $\{a_i(x),b_i(x),\mu_i\}_{i<j}$;  the initial term $a_0(x)$ is set to be $1$, $b_0(x)$ is set to be $0$, and $\mu_0$ is set to be $(r-\ck{\gamma}_{k,m})/2$;
\item for any $j>0$, solve (\ref{eqn_a104}) for $a_j(x)$ with the initial condition $a_j(0)=0$;
\item the image of $(-\frac{\dd}{\dd x} + 2\ck{\gamma}_{k,m}x)$ is $L^2$-orthogonal to $\exp(-\ck{\gamma}_{k,m}x^2)$;  $\mu_j$ is determined by the condition that the right hand side of (\ref{eqn_a103}) is $L^2$-orthogonal to $\exp(-\ck{\gamma}_{k,m}x^2)$;  then solve (\ref{eqn_a103}) for $b_j(x)$.
\end{itemize}
With (\ref{eqn_a105}) and (\ref{eqn_a106}), an induction argument shows that
\begin{itemize}
\item $\{a_j(x), b_j(x)\}_{j=1}^6$ are polynomials in $x$ with $\deg(a_j(x))\leq 3j$ and $\deg(b_j(x))\leq 3j-2$; their coefficients in front of $x^i$ obeys
\begin{align}
\Big|\frac{\dd^i a_j(x)}{\dd x^i}|_{x=0}\Big| + \Big|\frac{\dd^i b_j(x)}{\dd x^i}|_{x=0}\Big| \leq (i!)c_6 r^{\frac{i-j}{2}} ~; \label{eqn_a107}
\end{align}
\item $\mu_j$ obeys $|\mu_j|\leq c_6r^{\frac{1-j}{2}}$
\end{itemize}
for some constant $c_6$.

Let $\chi(x)$ be the cut-off function with $\chi(x) = 1$ when $|x|\leq\delta$ and $\chi(x) = 0$ when $|x|\geq2\delta$.  Consider the section $\psi^{\appr} = \chi(\rho-\rho_{k,m})\big(a(\rho-\rho_{k,m}),b(\rho-\rho_{k,m})e^{ik\theta}\big)e^{i(k\theta+mt)}(2\pi\ck{\Delta})^{-\oh}\in\CS_{k,m}$.  It has the following significance.
\begin{itemize}
\item Note that $\int_{\BR}|x^i\xi(x)|^2\dd x\leq ((i+3)!)(\ck{\gamma}_{k,m})^{-i}$ for any non-negative integer $i$.  It follows from (\ref{eqn_a107}) that
\begin{align}\label{eqn_a108}
\big| 1 - \int_{\ck{S}}|\psi^{\appr}|^2 \big| = \big| 1 - \int_{\BR}(\chi(x))^2(|a(x)|^2 + |b(x)|^2)\dd x \big| \leq c_7r^{-1} ~.
\end{align}
\item By plugging $a(x)$ and $b(x)$ into (\ref{eqn_a101}), the recursive relations (\ref{eqn_a103}) and (\ref{eqn_a104}) together with (\ref{eqn_a105}) and (\ref{eqn_a106}) imply that
\begin{align}\label{eqn_a109}
\int_{\ck{S}}\big|\ck{D}_r\psi^{\appr} - \mu\psi^{\appr}\big|^2 &\leq c_7r^{-6} ~.
\end{align}
\end{itemize}

Let $\ck{\varphi}_{k,m}$ be the $L^2$-orthogonal projection of $\psi^{\appr}$ onto the eigenspace of $\lambda$, and $\ck{\varphi}_{k,m}^{\err}$ be $\ck{\varphi}_{k,m}-\psi^{\appr}$.  Due to (\ref{eqn_a109}) and (\ref{eqn_a108}), $|\lambda - \mu|\leq c_8 r^{-3}$.  According to \cite[Lemma 5.5]{ref_Ts1}, any eigenvalue of $D_r$ on $\CS_{k,m}$ other than $\lambda $ has magnitude greater than $(\frac{r}{2})^{\oh}$.  It follows that
\begin{align*}
\int_{\ck{S}}|\ck{\varphi}_{k,m}^{\err}|^2 &\leq c_8r^{-1}\int_{\ck{S}}|\ck{D}_r\psi^{\appr}-\mu\psi^{\appr}|^2 \leq c_7c_8 r^{-7} ~,
\end{align*}
and then
\begin{align*}
\big| 1 - \int_{\ck{S}}|\ck{\varphi}_{k,m}|^2 \big| &\leq c_9r^{-1} ~.
\end{align*}
After normalizing the $L^2$-norm of $\ck{\varphi}_{k,m} = \psi^{\appr} + \ck{\varphi}_{k,m}^{\err}$, they satisfy Assertion (ii.a), (ii.b) and (ii.c) of the proposition.

\smallskip
(\emph{Assertion} (iii))\;
When $\ck{\rho}_{k,m}\leq20\delta$ or $\ck{\rho}_{k,m}\geq2-20\delta$, Assertion (iii) were proved in \cite[Lemma 5.12]{ref_Ts1}.  When $20\delta<\ck{\rho}_{k,m}<2-20\delta$, (\ref{eqn_Dirac_ck_01}) implies that $k + |m|\leq c_{3} r$.  In particular, the estimates on the coefficients (\ref{eqn_a106}) still hold, and so do (\ref{eqn_a108}) and (\ref{eqn_a109}).  Hence, Assertion (iii) follows.  This finishes the proof of Proposition \ref{prop_Dirac_ck_01}.

\smallskip
(\emph{Remark on the case when $|1-\ck{\rho}_{k,m}|\leq48\delta$})\;
On the region where $|\rho-1|\leq50\delta$, $\ck{f} = V$ and $\ck{g} =  2-\rho$.  By Definition \ref{defn_Dirac_ck_01}, $\ck{\rho}_{k,m} = 2-\frac{mV}{k}$ and then $|1-\frac{mV}{k}|\leq48\delta$.  According to (\ref{eqn_Dirac_ck_01}), $\ck{\gamma}_{k,m} = \frac{2k}{V}$.  The eigensection equation (\ref{eqn_a101}) on the region where $|\rho-1|\leq50\delta$ reads
\begin{align*}\left\{\begin{aligned}
(\frac{r}{2} - \frac{\ck{\gamma}_{k,m}}{2})\ck{\alpha}_{k,m} - \ck{\beta}_{k,m}' + (\ck{\gamma}_{k,m}x)\ck{\beta}_{k,m} &= \lambda\ck{\alpha}_{k,m} ~, \\
\ck{\alpha}_{k,m}' + (\ck{\gamma}_{k,m}x)\ck{\alpha}_{k,m} - (\frac{r}{2} - \frac{\ck{\gamma}_{k,m}}{2} + 1)\ck{\beta}_{k,m} &= \lambda\ck{\beta}_{k,m} ~.
\end{aligned}\right.\end{align*}
It follows that $\ck{\alpha}_{k,m} = \xi(x) = (\frac{\ck{\gamma}_{k,m}}{\pi})^{\frac{1}{4}}\exp(-\frac{\ck{\gamma}_{k,m}}{2}x^2)$, $\ck{\beta}_{k,m} = 0$ and $\lambda = (r-\ck{\gamma}_{k,m})/2$ is a true solution on the region where $|\rho-1|\leq50\delta$.  It is easy to see that the upper bound in (\ref{eqn_a108}) and (\ref{eqn_a109}) is actually $c_{10}e^{-\frac{r}{c_{10}}}$ in this case.  Thus,
\begin{align}\label{eqn_a110}
\ck{\varphi}_{k,m}^{\appr} &= \fc_{k,m}\big( \chi(\rho-\ck{\rho}_{k,m})\xi(\rho-\rho_{k,m})e^{i(k\theta+mt)}\pi^{-\oh}, 0 \big)
\end{align}
for some constant $\fc_{k,m}$ with $|1-\fc_{k,m}|\leq c_{11}e^{-\frac{r}{c_{11}}}$.

\subsection{The local model for the mapping torus}\label{subsec_ap2}
The purpose of this section is to prove Proposition \ref{prop_Dirac_hat_01}.

By (\ref{eqn_hat_Dirac_00}) and (\ref{eqn_hat_Dirac_01}), the Cauchy--Riemann operator $\dbr_{r,k}$ is
\begin{align*}
 \pl_\rho - \frac{i}{\hat{h}_\sigma'}\pl_t + \frac{1}{\hat{h}'_\sigma}\big( r(\hat{h}_\sigma + \oh\hat{g}) - \frac{k}{V}{\hat{g}} \big)
\end{align*}
with respect to the trivialization $\bo_-$ (\ref{eqn_hat_trivial_K}).  The equation is invariant under the $S^1$-action in $e^{it}$.  Consider the section $\hat{\alpha}_{k,n}(\rho)e^{ikt}$.  The Cauchy--Riemann equation reads
\begin{align}\label{eqn_a201}
\pl_\rho\hat{\alpha}_{k,n} + \frac{1}{\hat{h}'_\sigma}\big( n + r(\hat{h}_\sigma + \oh\hat{g}) - \frac{k}{V}{\hat{g}} \big)\hat{\alpha}_{k,n} &=0 ~.
\end{align}

Suppose that $20\delta<\hat{\rho}_{k,n}<2-20\delta$.  The equation (\ref{eqn_a201}) can be solved by the integral factor:
\begin{align}\label{eqn_a202}
\hat{\alpha}_{k,n}(\rho) &= \hat{\fc}_{k,n}\exp\big(-\int_{\hat{\rho}_{k,n}}^{\rho}\frac{1}{\hat{h}'_\sigma(s)}( n + r(\hat{h}_\sigma(s) + \oh\hat{g}(s)) - \frac{k}{V}{\hat{g}(s)})\dd s \big) ~.
\end{align}
The constant $\hat{\fc}_{k,n}$ is chosen so that $\int_0^2|\hat{\alpha}_{k,n}(\rho)|^2\,\hat{h}'_\sigma\dd\rho = 1$.  Based on the properties of $\hat{h}_\sigma$ and $\hat{g}$ explained in Definition \ref{defn_Dirac_hat_01} and with the condition $|\frac{r}{2}-\frac{k}{V}|\leq(\frac{1}{3}r)^\oh$, there exists a constant $c_{12}$ such that $|\hat{\alpha}_{k,n}(\rho)|\leq c_{12}e^{-\frac{r}{c_{12}}}$ for any $|\rho-\hat{\rho}_{k,n}|>\delta$.  Consider $\psi^{\appr}_{k,n} = \chi(\rho-\hat{\rho}_{k,n})\hat{\alpha}_{k,n}(\rho)$ where $\chi$ is cut-off function as in \S\ref{subsec_ap1}.  It is not hard to see that
\begin{align*}
\hat{\alpha}_{k,n}^{\appr} &= \big(\int_0^2\langle\psi_{k,n}^{\appr},\hat{\alpha}_{k,n}\rangle\,\hat{h}'_\sigma\dd\rho\big)^{-1} \psi^{\appr}_{k,n} &\text{ and }&    &\hat{\alpha}_{k,n}^{\err} &= \hat{\alpha}_{k,n} - \hat{\alpha}_{k,n}^{\appr}
\end{align*}
satisfy Assertions (i), (ii), (iii) and (iv) of the proposition.  When $\hat{\rho}_{k,n}\leq20\delta$ or $\hat{\rho}_{k,n}\geq2-20\delta$, the construction of the approximate solution is essentially the same as that for \cite[Proposition 5.10]{ref_Ts1}, and we omit it here.

For Assertion (v), note that $\hat{\rho}_{k,n} = 2 - \frac{(n+[r])V}{k}$ when $|n-\frac{k}{V}+[r]|<\frac{k}{V}(48\delta)$.  It follows that the integrand of (\ref{eqn_a202}) is equal to $\frac{2k}{V}(\rho-\hat{\rho}_{k,n})$ when $|\rho-\hat{\rho}_{k,n}|\leq2\delta$.  Assertion (v) follows from a straightforward computation and (\ref{eqn_a110}).  This finishes the proof of Proposition \ref{prop_Dirac_hat_01}.

\begin{bibdiv}
\begin{biblist}

\bib{ref_APS1}{article}{
   author={Atiyah, M. F.},
   author={Patodi, V. K.},
   author={Singer, I. M.},
   title={Spectral asymmetry and Riemannian geometry. I},
   journal={Math. Proc. Cambridge Philos. Soc.},
   volume={77},
   date={1975},
   pages={43--69},
}

\bib{ref_APS2}{article}{
   author={Atiyah, M. F.},
   author={Patodi, V. K.},
   author={Singer, I. M.},
   title={Spectral asymmetry and Riemannian geometry. II},
   journal={Math. Proc. Cambridge Philos. Soc.},
   volume={78},
   date={1975},
   number={3},
   pages={405--432},
}

\bib{ref_APS3}{article}{
   author={Atiyah, M. F.},
   author={Patodi, V. K.},
   author={Singer, I. M.},
   title={Spectral asymmetry and Riemannian geometry. III},
   journal={Math. Proc. Cambridge Philos. Soc.},
   volume={79},
   date={1976},
   number={1},
   pages={71--99},
}

\bib{ref_BEHWZ}{article}{
   author={Bourgeois, F.},
   author={Eliashberg, Y.},
   author={Hofer, H.},
   author={Wysocki, K.},
   author={Zehnder, E.},
   title={Compactness results in symplectic field theory},
   journal={Geom. Topol.},
   volume={7},
   date={2003},
   pages={799--888},
}

\bib{ref_CH}{article}{
   author={Chern, S. S.},
   author={Hamilton, R. S.},
   title={On Riemannian metrics adapted to three-dimensional contact
   manifolds},
   note={With an appendix by Alan Weinstein},
   conference={
      title={Workshop Bonn 1984},
      address={Bonn},
      date={1984},
   },
   book={
      series={Lecture Notes in Math.},
      volume={1111},
      publisher={Springer},
      place={Berlin},
   },
   date={1985},
   pages={279--308},
}

\bib{ref_CM}{article}{
   author={Cieliebak, K.},
   author={Mohnke, K.},
   title={Compactness for punctured holomorphic curves},
   note={Conference on Symplectic Topology},
   journal={J. Symplectic Geom.},
   volume={3},
   date={2005},
   number={4},
   pages={589--654},
}

\bib{ref_Etnyre}{article}{
   author={Etnyre, John B.},
   title={Lectures on open book decompositions and contact structures},
   conference={
      title={Floer homology, gauge theory, and low-dimensional topology},
   },
   book={
      series={Clay Math. Proc.},
      volume={5},
      publisher={Amer. Math. Soc.},
      place={Providence, RI},
   },
   date={2006},
   pages={103--141},
}

\bib{ref_Giroux}{article}{
   author={Giroux, Emmanuel},
   title={G\'eom\'etrie de contact: de la dimension trois vers les
   dimensions sup\'erieures},
   conference={
      title={},
      address={Beijing},
      date={2002},
   },
   book={
      publisher={Higher Ed. Press},
      place={Beijing},
   },
   date={2002},
   pages={405--414},
}

\bib{ref_Hutchings}{article}{
   author={Hutchings, Michael},
   title={Taubes's proof of the Weinstein conjecture in dimension three},
   journal={Bull. Amer. Math. Soc. (N.S.)},
   volume={47},
   date={2010},
   number={1},
   pages={73--125},
}

\bib{ref_LM}{book}{
   author={Lawson, H. Blaine, Jr.},
   author={Michelsohn, Marie-Louise},
   title={Spin geometry},
   series={Princeton Mathematical Series},
   volume={38},
   publisher={Princeton University Press},
   place={Princeton, NJ},
   date={1989},
   pages={xii+427},
}

\bib{ref_Moser}{article}{
   author={Moser, J{\"u}rgen},
   title={On Harnack's theorem for elliptic differential equations},
   journal={Comm. Pure Appl. Math.},
   volume={14},
   date={1961},
   pages={577--591},
}

\bib{ref_OS}{book}{
   author={Ozbagci, Burak},
   author={Stipsicz, Andr{\'a}s I.},
   title={Surgery on contact 3-manifolds and Stein surfaces},
   series={Bolyai Society Mathematical Studies},
   volume={13},
   publisher={Springer-Verlag},
   place={Berlin},
   date={2004},
   pages={281},
}

\bib{ref_Taubes_SW_Weinstein}{article}{
   author={Taubes, Clifford Henry},
   title={The Seiberg-Witten equations and the Weinstein conjecture},
   journal={Geom. Topol.},
   volume={11},
   date={2007},
   pages={2117--2202},
}

\bib{ref_Taubes_sf}{article}{
   author={Taubes, Clifford Henry},
   title={Asymptotic spectral flow for Dirac operators},
   journal={Comm. Anal. Geom.},
   volume={15},
   date={2007},
   number={3},
   pages={569--587},
}

\bib{ref_TW}{article}{
   author={Thurston, W. P.},
   author={Winkelnkemper, H. E.},
   title={On the existence of contact forms},
   journal={Proc. Amer. Math. Soc.},
   volume={52},
   date={1975},
   pages={345--347},
}

\bib{ref_Ts1}{article}{
   author={Tsai, Chung-Jun},
   label={Ts}
   title={Asymptotic spectral flow for Dirac operators of disjoint Dehn twists},
   eprint={arXiv:1104.5000},
   url={http://arxiv.org/abs/1104.5000},
   status={to appear in Asian J. Math.},
}

\bib{ref_Ts2}{article}{
   author={Tsai, Chung-Jun},
   title={Dirac spectral flow on contact three manifolds I: eigensection estimates and spectral asymmetry},
   label={Ts1}
   status={preprint},
}

\bib{ref_VW}{article}{
   author={Vafa, Cumrun},
   author={Witten, Edward},
   title={Eigenvalue inequalities for fermions in gauge theories},
   journal={Comm. Math. Phys.},
   volume={95},
   date={1984},
   number={3},
   pages={257--276},
}

\end{biblist}
\end{bibdiv}

\end{document}